\titleformat{\subsection}[runin]{\normalfont\bfseries}{\thesubsection.}{.5em}{}[.]\titlespacing{\subsection}{0pt}{2ex plus .1ex minus .2ex}{.8em}
\titleformat{\subsubsection}[runin]{\normalfont\itshape}{\thesubsubsection.}{.3em}{}[.]\titlespacing{\subsubsection}{0pt}{1ex plus .1ex minus .2ex}{.5em}
\titleformat{\paragraph}[runin]{\normalfont\itshape}{\theparagraph.}{.3em}{}[.]\titlespacing{\paragraph}{0pt}{1ex plus .1ex minus .2ex}{.5em}
\definecolor{darkred}{rgb}{0.9,0,0.3}
\definecolor{darkblue}{rgb}{0,0.3,0.9}
\definecolor{vdarkred}{rgb}{0.7,0,0.2}
\definecolor{vdarkblue}{rgb}{0,0.2,0.7}
\numberwithin{equation}{section}
\numberwithin{figure}{section}
\theoremstyle{plain} 
\newtheorem{theorem}{Theorem}[section]
\newtheorem*{theorem*}{Theorem}
\newtheorem{lemma}[theorem]{Lemma}
\newtheorem*{lemma*}{Lemma}
\newtheorem*{corollary*}{Corollary}
\newtheorem{proposition}[theorem]{Proposition}
\newtheorem*{proposition*}{Proposition}
\newtheorem*{conjecture*}{Conjecture}
\theoremstyle{definition} 
\newtheorem{definition}[theorem]{Definition}
\newtheorem*{definition*}{Definition}
\newtheorem*{example*}{Example}
\newtheorem{remark}[theorem]{Remark}
\newtheorem*{remark*}{Remark}
\newtheorem*{assumption*}{Assumption}
\newcommand{\bb}{\mathbb} 
\renewcommand{\cal}{\mathcal}
\newcommand{\ul}[1]{\underline{#1} \!\,} 
\newcommand{\ol}[1]{\overline{#1} \!\,} 
\newcommand{\txt}[1]{\text{\rm{#1}}}
\newcommand{\E}{\mathbb{E}}
\newcommand{\R}{\mathbb{R}}
\newcommand{\C}{\mathbb{C}}
\newcommand{\N}{\mathbb{N}}
\newcommand{\ii}{\mathrm{i}}
\newcommand{\dd}{\mathrm{d}}
\newcommand{\col}{\mathrel{\vcenter{\baselineskip0.75ex \lineskiplimit0pt \hbox{.}\hbox{.}}}}
\newcommand*{\deq}{\mathrel{\vcenter{\baselineskip0.65ex \lineskiplimit0pt \hbox{.}\hbox{.}}}=}
\newcommand*{\eqd}{=\mathrel{\vcenter{\baselineskip0.65ex \lineskiplimit0pt \hbox{.}\hbox{.}}}}
\newcommand{\eqdist}{\overset{d}{=}}
\renewcommand{\leq}{\leqslant}
\renewcommand{\le}{\leqslant}
\renewcommand{\geq}{\geqslant}
\renewcommand{\ge}{\geqslant}
\renewcommand{\epsilon}{\varepsilon}
\newcommand{\p}[1]{({#1})}
\newcommand{\pb}[1]{\bigl({#1}\bigr)}
\newcommand{\pbb}[1]{\biggl({#1}\biggr)}
\newcommand{\abs}[1]{\lvert #1 \rvert}
\newcommand{\norm}[1]{\lVert #1 \rVert}
\newcommand{\avg}[1]{\langle #1 \rangle}
\DeclareMathOperator{\tr}{Tr}
\DeclareMathOperator{\im}{Im}
\DeclareMathOperator{\sgn}{sgn}
\newcommand{\bC}{ {\mathbb C} }
\newcommand{\bN}{ {\mathbb N} }
\newcommand{\bE}{ {\mathbb E} }
\newcommand{\bP}{ {\mathbb P} }
\newcommand{\bR}{ {\mathbb R} }
\newcommand{\bH}{ {\mathbb H} }
\newcommand{\prescript}[2]{\smash{\tensor[^{#1}]#2{}}}
\newcommand{\ee}{\varepsilon}
\newcommand*{\rom}[1]{\expandafter\@slowromancap\romannumeral #1@}
\title{Mesoscopic eigenvalue statistics of Wigner matrices}
\author{Yukun He\footnote{ETH Z\"urich, Departement Mathematik. Email: {\tt yukun.he@math.ethz.ch}.} \and Antti Knowles\footnote{ETH Z\"urich, Departement Mathematik. Email: {\tt knowles@math.ethz.ch}.}}
\begin{document}
\maketitle

\begin{abstract}
We prove that the linear statistics of the eigenvalues of a Wigner matrix converge to a universal Gaussian process on all mesoscopic spectral scales, i.e.\ scales larger than the typical eigenvalue spacing and smaller than the global extent of the spectrum.
\end{abstract}

\section{Introduction} \label{sec:1}
	
Let $H$ be an $N \times N$ Wigner matrix -- a Hermitian random matrix with independent upper-triangular entries with zero expectation and constant variance.
We normalize $H$ so that as $N \to \infty$ its spectrum converges to the interval $[-2,2]$, and therefore its typical eigenvalue spacing is of order $N^{-1}$. In this paper we study linear eigenvalue statistics of $H$ of the form
\begin{equation} \label{lin_stat_intro}
\tr f\bigg(\frac{H-E}{\eta}\bigg)\,,
\end{equation}
where $f$ is a test function, $E \in (-2,2)$ a fixed reference energy inside the bulk spectrum, and $\eta$ an $N$-dependent spectral scale. We distinguish the \emph{macroscopic regime} $\eta \asymp 1$, the \emph{microscopic regime} $\eta \asymp N^{-1}$, and the \emph{mesoscopic regime} $N^{-1} \ll \eta \ll 1$. 
The limiting distribution of \eqref{lin_stat_intro} in the macroscopic regime is by now well understood; see \cite{BY05, LP}. Conversely, in the microscopic regime the limiting distribution of \eqref{lin_stat_intro} is governed by the distribution of individual eigenvalues of $H$. This question has recently been the focus of much attention, and the universality of the emerging Wigner-Dyson-Mehta (WDM) microscopic eigenvalue statistics for Wigner matrices has been established in great generality; we refer to the surveys \cite{3,EY12} for further details.

In this paper we focus on the mesoscopic regime. The study of linear eigenvalue statistics of Wigner matrices on mesoscopic scales was initiated in \cite{6,7}. In \cite{6}, the authors consider the case of Gaussian $H$ (the Gaussian Orthogonal Ensemble) and take $f(x) = (x - \ii)^{-1}$, in which case \eqref{lin_stat_intro} is $\eta$ times the trace of the resolvent of $H$ at $E + \ii \eta$. Under these assumptions, it is proved in \cite{6} that, after a centring, the linear statistic \eqref{lin_stat_intro} converges in distribution to a Gaussian random variable on all mesoscopic scales $N^{-1} \ll \eta \ll 1$. In \cite{7} this result was extended to a class of Wigner matrices for the range of mesoscopic scales $N^{-1/8} \ll \eta \ll 1$. Recently, the results of \cite{7} were extended in \cite{8} to arbitrary Wigner matrices, mesoscopic scales $N^{-1/3} \ll \eta \ll 1$, and general test functions $f$ subject to mild regularity and decay conditions. Apart from the works \cite{7,8} on Wigner matrices, mesoscopic eigenvalue statistics have also been analysed for invariant ensembles; see \cite{BD, DJ} and the references therein.

Let $Z = (Z(f))_f$ denote the Gaussian process obtained as the mesoscopic limit of a centring of \eqref{lin_stat_intro}.
From the works cited above, it is known that the variance of $Z(f)$ is the square of the Sobolev $H^{1/2}$-norm of $f$:
\begin{equation} \label{intro_covariance}
\E Z(f)^2 = \frac{1}{2 \pi^2} \int \pbb{\frac{f(x) - f(y)}{x - y}}^2 \, \dd x \, \dd y = \frac{1}{\pi} \int \abs{\xi}\,\abs{\hat f(\xi)}^2  \, \dd \xi\,,
\end{equation}
where $\hat{f}(\xi) \deq (2\pi)^{-1/2} \int f(x) e^{-\mathrm{i}\xi x} \, \dd x$. Hence, a remarkable property of $Z$ is scale invariance: $Z \eqdist Z_\lambda$, where $Z_\lambda(f) \deq Z(f_\lambda)$ and $f_\lambda(x) \deq f(\lambda x)$. It may be shown that $Z$ is obtained by extrapolating the microscopic WDM statistics to mesoscopic scales, and we therefore refer to its behaviour as the WDM mesoscopic statistics. In light of the microscopic universality results for Wigner matrices mentioned above, the emergence of WDM statistics on mesoscopic scales is therefore not surprising.

All of the models described above, including Wigner matrices, correspond to mean-field models without spatial structure.
In \cite{EK1,EK2}, linear eigenvalue statistics were analysed for \emph{band matrices}, where matrix entries are set to be zero beyond a certain distance $W \leq N$ from the diagonal. Band matrices are a commonly used model of quantum transport in disordered media. Wigner matrices can be regarded as a special case $W = N$ of band matrices. Unlike the mean-field Wigner matrices, band matrices possess a nontrivial spatial structure. An important motivation for the study of mesoscopic eigenvalue statistics of band matrices arises from the theory of conductance fluctuations; we refer to \cite{EK1} for more details. The results of \cite{EK1,EK2} hold in the regime $W^{-1/3} \ll \eta \ll 1$, and hence for the special case of Wigner matrices they hold for $N^{-1/3} \ll \eta \ll 1$.
A key conclusion of \cite{EK1,EK2} is that for band matrices there is a sharp transition in the mesoscopic spectral statistics, predicted in the physics literature \cite{AS}: above a certain critical spectral scale $\eta_c$ the mesoscopic spectral statistics are no longer governed by the Wigner-Dyson-Mehta mesoscopic statistics \eqref{intro_covariance}, but by new limiting statistics, referred to as Altshuler-Shklovskii (AS) statistics in \cite{EK1,EK2}, which are not scale invariant like \eqref{intro_covariance}. For instance for the $d$-dimensional ($d = 1,2,3$) AS statistics, the variance of the limiting Gaussian process $Z(f)$ is $\frac{1}{\pi} \int \abs{\xi}^{1 - d/2}\,\abs{\hat f(\xi)}^2  \, \dd \xi$ instead of the right-hand side of \eqref{intro_covariance}; see \cite{EK1,EK2}. In particular, there is a range of mesoscopic scales such that, although the \emph{microscopic} eigenvalue statistics are expected to satisfy the WDM statistics, the \emph{mesoscopic} statistics do not, and instead satisfy the AS statistics. Hence, the WDM statistics on microscopic and mesoscopic scales do in general not come hand in hand.

In this paper we establish the WDM mesoscopic statistics for Wigner matrices in full generality. Our results hold on all mesoscopic scales $N^{-1} \ll \eta \ll 1$ and all Wigner matrices whose entries have finite moments of order $4 + o(1)$. We require our test functions to have $1 + o(1)$ continuous derivatives and be subject to mild decay assumptions, as in \cite{8}. The precise statements are given in Section \ref{sec:2} below.

Our proof is based on two main ingredients: families of self-consistent equations for moments of linear statistics inspired by \cite{7}, and the local semicircle law for Wigner matrices from \cite{4,5}. Our analysis of the self-consistent equations departs significantly from that of \cite{7}, since repeating the steps there, even using the optimal bounds provided by the local semicircle law, requires the lower bound $\eta \gg N^{-1/2}$ on the spectral scale. In addition, dealing with general test functions $f$ instead of $f(x) = (x - \ii)^{-1}$ requires a new family of self-consistent equations that is combined with the Helffer-Sj\"ostrand representation for general functions of $H$. We perform the proof in two major steps.

In the first step, performed in Section \ref{sec3}, we consider traces of resolvents $G \equiv G(z) = (H - z)^{-1}$, corresponding to taking $f(x) = (x - \ii)^{-1}$ in \eqref{lin_stat_intro}. Denoting by $\ul G$ the normalized trace of $G$ and $\avg{X} \deq X - \E X$, we derive a family of self-consistent equations (see \eqref{3.18} below) for the moments $\E \avg{\ul{G^*}}^n \avg{\ul G}^m$ following \cite{7}, obtained by expanding one factor inside the expectation using the resolvent identity and then applying a standard cumulant expansion (see Lemma \ref{lem:3.1} below) to the resulting expression of the form $\E f(h) h$. The main work of the first step is to estimate the error terms of the self-consistent equation. An important ingredient is a careful estimate of the remainder term in the cumulant expansion (see Lemma \ref{termL} (i) below), which allows us to remove the condition $\eta \gg N^{-1/2}$ on the spectral scale that would be required if one merely combined the local semicircle law with the approach of \cite{7}. Other important tools behind these estimates are new precise high-probability bounds on the entries of the powers $G^k$ of the resolvent (see Lemma \ref{prop4.4} below) and a further family of self-consistent equations for $\E \ul{G^k}$ (see Lemma \ref{lem3.11} below).

In the second step, performed in Section \ref{sec4}, we consider general test functions $f$. The starting point is the well-known Helffer-Sj\"ostrand respresentation of \eqref{lin_stat_intro} as an integral of traces of resolvents. An important ingredient of the proof is a self-consistent equation (see \eqref{4.20} below) that is used on the integrand of the Helffer-Sj\"ostrand representation. Compared to the first step, we face the additional difficulty that the arguments $z$ of the resolvents are now integrated over, and may in particular have very small imaginary parts. Handling such integrals for arbitrary mesoscopic scales $\eta$ and comparatively rough test functions in $\cal C^{1 + o(1)}$ requires some care, and we use two different truncation scales $N^{-1} \ll \omega \ll \sigma \ll \eta$ in the imaginary part of $z$, which allow us to extract the leading term. See Section \ref{sec4} for a more detailed explanation of the truncation scales. The error terms are estimated by a generalization of the estimates established in the first step.

Finally, in Section \ref{sec:5.1} we give a simple truncation and comparison argument that allows us to consider without loss of generality Wigner matrices whose entries have finite moments of all order, instead of finite moments of order $4 + o(1)$.

\subsection*{Conventions}
We regard $N$ as our fundamental large parameter. Any quantities that are not explicitly constant or fixed may depend on $N$; we almost always omit the argument $N$ from our notation. We use $C$ to denote a generic large positive constant, which may depend on some fixed parameters and whose value may change from one expression to the next. Similarly, we use $c$ to denote a generic small positive constant.

\section{Results} \label{sec:2}
We begin this section by defining the class of random matrices that we consider.
\begin{definition}[Wigner matrix] \label{def:Wigner}
	A \emph{Wigner matrix} is a Hermitian $N\times N$ matrix $H = H^* \in \C^{N \times N}$ whose entries $H_{ij}$ satisfy the following conditions.
	\begin{enumerate}
		\item
		The upper-triangular entries $(H_{ij} \col 1\le i\le j\le N)$ are independent.
		\item
		We have $\E H_{ij}=0$ for all $i,j$, and $\E |\sqrt{N} H_{ij}|^2=1$ for $i \ne j$.
		\item
		There exists constants $c,C > 0$ such that $\E |\sqrt{N} H_{ij}|^{4+c-2\delta_{ij}} \le C$ for all $i,j$.
	\end{enumerate}
	We distinguish the \emph{real symmetric} case, where $H_{ij} \in \R$ for all $i,j$, and the \emph{complex Hermitian case}, where $\E H_{ij}^2 = 0$ for $i \neq j$.
\end{definition}

For conciseness, we state our results for the real symmetric case. Analogous results hold for the complex Hermitian case; see Remark \ref{rmk1} below.

Our first result is on the convergence of the trace of the resolvent $G(z)\deq (H-z)^{-1}$, where $\im z \ne 0$. The Stieltjes transform of the empirical spectral measure of $H$ is
	\begin{equation} \label{2.4}
		\underline{G(z)}\deq \frac{1}{N}\tr G(z)\,.
	\end{equation}
	For $x \in \bR, z \in \bC$, and $\im z \ne 0$, the Wigner semicircle law $\varrho$ and its Stieltjes transform $m$ are defined by 
	\begin{equation} \label{2.5}
		\varrho(x)\deq \frac{1}{2\pi}\sqrt{(4-x^2)_{+}}\,, \ \ \ \ \ m(z)\deq \int \frac{\varrho(x) }{x-z}\,\dd x\,.
	\end{equation}
Denote by $\bH\deq\{z \in \bC: \im z>0\}$ the complex upper half-plane. Let $(Y(b))_{b \in \bH}$ denote the complex-valued Gaussian process with mean zero and covariance
	\begin{equation} \label{cov}
	\bE(Y(b_1)\overline{Y(b_2)})=-\dfrac{2}{(b_1-\overline{b}_2)^2}\,, \qquad  \bE(Y(b_1){Y(b_2)})=0
	\end{equation}
	for all $b_1, b_2 \in \bH$. For instance, we can set
	\begin{equation} \label{2}
	Y(b)= \frac{1}{\sqrt{2}}\left(\frac{2}{b+\mathrm{i}}\right)^2 \sum\limits_{k=0}^{\infty}\sqrt{k+1}\left(\frac{b-\mathrm{i}}{b+\mathrm{i}}\right)^k \vartheta_k\,,
	\end{equation}
	where $(\vartheta_k)_{k=0}^{\infty}$ 
	is a family of independent standard complex Gaussians, where, by definition, a \emph{standard complex Gaussian} is a mean-zero Gaussian random variable $X$ satisfying $\E X^2 = 0$ and $\E \abs{X}^2 = 1$.
	Finally, for $E \in \R$ and $\eta >0$, we define the process $(\hat Y(b))_{b \in \bb H}$ through $$
	\hat{Y}(b)\deq N \eta \, (\underline{G(E+b\eta)}-m(E+b\eta))
	$$ 
	for all $b \in \bH$. We may now state our first result. 
	
	\begin{theorem}[Convergence of the resolvent]
	\label{mainthm1}
	Let $H$ be a real symmetric Wigner matrix. Fix $\alpha \in (0,1)$ and set $\eta\deq N^{-\alpha}$. Fix $E \in (-2,2)$. Then the process $(\hat{Y}(b))_{b \in \bH}$
	 converges in the sense of finite-dimensional distributions to $(Y(b))_{b \in \bH}$ as $N \to \infty$. That is, for any fixed $p$ and $b_1,b_2,\dots,b_p \in \bH$, we have
		\begin{equation} \label{2.8.11}
		(\hat{Y}(b_1),\dots,\hat{Y}(b_p) ) \overset{d}{\longrightarrow}  (Y(b_1),\dots,Y(b_p))
		\end{equation}
		as $N \to \infty$.

	\end{theorem}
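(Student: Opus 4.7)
The plan is to establish finite-dimensional convergence by the method of moments. Since $(Y(b_1),\dots,Y(b_p))$ is a centered complex Gaussian process whose law is determined by the covariance \eqref{cov}, Wick's theorem reduces the problem to showing that every mixed moment $\E\prod_{i=1}^{n}\hat Y(b_i)\prod_{j=n+1}^{n+m}\overline{\hat Y(b_j)}$ converges to the corresponding Gaussian moment, namely the sum over perfect pairings in which unstarred--starred pairs contribute $-2(b_i-\bar b_j)^{-2}$ and same-type pairs vanish. Using that $E\in\bR$ and $\overline{\underline{G(z)}}=\underline{G(\bar z)}$, this is equivalent to computing the asymptotics of joint moments of the form $(N\eta)^{n+m}\E\prod_i\avg{\underline{G(z_i)}}\prod_j\avg{\underline{G(z_j)^*}}$, where $z_i\deq E+b_i\eta$ and $\avg{X}\deq X-\E X$; the replacement of $\hat Y$ by the centered quantity $N\eta\avg{\underline{G}}$ is legitimate because $N\eta(\E\underline{G(z)}-m(z))=o(1)$ by the local semicircle law of \cite{4,5}.

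The main analytic input is a self-consistent equation for these mixed moments, derived in the spirit of \cite{7}. I pick one factor, say $\avg{\underline{G(z_1)}}=\frac{1}{N}\sum_a\avg{G_{aa}(z_1)}$, and use the entrywise identity $z_1 G_{aa}(z_1)=-1+\sum_b H_{ab}G_{ba}(z_1)$ to swap the diagonal resolvent entry for a sum involving the random entries $H_{ab}$. Applying a cumulant expansion (Lemma \ref{lem:3.1}) to $\E[H_{ab}G_{ba}(z_1)\mathcal{P}]$, where $\mathcal{P}$ is the product of the remaining $n+m-1$ factors, produces the leading second-cumulant term $\frac{1}{N}\E[\partial_{H_{ab}}(G_{ba}(z_1)\mathcal{P})]$. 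When both copies of $\partial_{H_{ab}}$ act on $G_{ba}(z_1)$, the sum over $a,b$ together with concentration of $\underline{G}$ around $m$ produces $m(z_1)^2\E[\underline{G(z_1)}\mathcal{P}]$, which combines with the constant from the identity to reconstruct $\avg{\underline{G(z_1)}}$ with prefactor $1-m(z_1)^2$, nonzero in the bulk $E\in(-2,2)$. When one derivative instead acts on some factor in $\mathcal{P}$, using $\partial_{H_{ab}}\underline{G(w)}=-\frac{2}{N}(G^2)_{ab}(w)$ and the two-resolvent identity $\frac{1}{N}\tr(G(z_1)G(w)^2)=-\partial_w[(\underline{G(z_1)}-\underline{G(w)})/(z_1-w)]$, together with concentration, yields the pairwise contribution dictated by \eqref{cov} times the $(n+m-2)$-factor residual moment. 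Dividing by $1-m(z_1)^2$, this is the self-consistent equation \eqref{3.18}.

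The main obstacle is controlling the error terms uniformly down to $\eta\gg N^{-1}$; a direct combination of the approach of \cite{7} with the optimal local law would only reach $\eta\gg N^{-1/2}$. Three ingredients resolve this. First, a refined quantitative estimate on the remainder in the truncated cumulant expansion (Lemma \ref{termL}) balances the truncation order against $\eta$: the $k$-th cumulant of $H_{ab}$ scales as $N^{-k/2}$, while each extra $\partial_{H_{ab}}$ produces further resolvent entries bounded off-diagonally by $(N\eta)^{-1/2}$ via the local law. Second, new high-probability entrywise bounds on $G^k$ (Lemma \ref{prop4.4}) control terms in which multiple derivatives stack on the same factor and generate $(G^k)_{ab}$-type entries. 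Third, an auxiliary self-consistent equation for $\E\underline{G^k}$ (Lemma \ref{lem3.11}) identifies the deterministic limit of these higher-power traces, preventing uncontrolled $\eta^{-1}$ losses from $k$-fold near-coincident arguments.

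With these tools in hand an induction on $n+m$ closes the argument: the cases $n+m\le 1$ reduce to the local law, the case $n+m=2$ reduces to an explicit two-point variance computation that directly matches \eqref{cov}, and the inductive step uses the self-consistent equation to express each $(n,m)$-moment as a Wick sum over pairings with the correct covariance coefficients, thereby identifying the Gaussian limit \eqref{2.8.11}. The hardest step, and where the novelty of the analysis lies, is the simultaneous bookkeeping of the $\eta^{-1}$ factors produced by stacked derivatives and the compensating gains from the local law and Lemma \ref{prop4.4}, which is exactly what allows the argument to run over the entire mesoscopic range $N^{-1}\ll\eta\ll 1$.
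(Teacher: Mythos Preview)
Your outline follows the paper's approach closely and correctly identifies the three technical ingredients (Lemmas \ref{termL}, \ref{prop4.4}, \ref{lem3.11}) that push the self-consistent equation \eqref{3.18} down to all mesoscopic scales. However, there are two genuine gaps.

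First, your centering step is not justified by the local semicircle law alone. You claim that $N\eta\bigl(\E\,\underline{G(z)}-m(z)\bigr)=o(1)$ follows from \cite{4,5}, but Theorem \ref{refthm1} only gives $|\underline{G}-m|\prec (N\eta)^{-1}$, hence $N\eta\,|\E\,\underline{G}-m|=O(N^{\epsilon})$ for every fixed $\epsilon>0$; this is \emph{not} $o(1)$. The paper proves the required bound $\E\,\underline{G}-m=O(N^{\alpha-1-c_0})$ separately as Lemma \ref{mainlem}(ii) (Section \ref{sec3.3}), via a further self-consistent equation \eqref{3.76} for $\E\,\underline{G}$ of the same type as \eqref{3.18}, followed by a stability estimate for the quadratic $x^2+zx+1=0$ satisfied by $m$. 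This step is not automatic and uses the same error-term machinery (including Lemma \ref{lem3.11} and the analogue of Lemma \ref{termL}) that you invoke for the moment computation.

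Second, your argument as written applies only to Wigner matrices with entries having finite moments of all orders (Definition \ref{def:dWigner}), since the cumulant expansion of Lemma \ref{lem:3.1} requires all cumulants to exist. Theorem \ref{mainthm1} is stated under the weaker hypothesis of $4+c$ moments (Definition \ref{def:Wigner}). The paper bridges this in Section \ref{sec:5.1} with a truncation plus Green function comparison: one constructs an intermediate matrix $H^{(1)}$ with bounded entries that agrees with $H$ with high probability, and a matrix $H^{(2)}$ with all moments finite matching the first four moments of $H^{(1)}$; a Lindeberg swapping argument then transfers the distributional limit. Without this step the theorem is not proved in its stated generality.
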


	Our second result is on the convergence of the trace of general functions of $H$.
For fixed $r, s>0$, denote by $\cal C^{1,r,s}(\bR)$ the space of all real-valued $\cal C^1$-functions $f$ such that $f'$ is $r$-H\"{o}lder continuous uniformly in $x$, and $|f(x)| + |f^{\prime}(x)|=O(( 1 + |x|)^{-1-s})$.
Let $(Z(f))_{f \in \cal C^{1,r,s}(\bR)}$ denote the real-valued Gaussian process with mean zero and covariance 
	\begin{equation} \label{2.13}
	\bE(Z(f_1){Z(f_2)})=\frac{1}{2\pi^2}\int \frac{(f_1(x)-f_1(y))(f_2(x)-f_2(y))}{(x-y)^2}\, \dd x\,\dd y
	\end{equation} 
	for all $f_1,f_2 \in \cal C^{1,r,s}(\bR)$ (see also \eqref{intro_covariance}).
	Our next result is the weak convergence of the
	process
	\begin{equation} \label{zhat}
	\hat{Z}(f)\deq \tr f\bigg(\frac{H-E}{\eta}\bigg)- N \int_{-2}^{2} \varrho(x) f\bigg(\frac{x-E}{\eta}\bigg) \mathrm{d}x\,,
	\end{equation}
	where $f \in \cal C^{1,r,s}(\bR)$. We may now state our second result.
	
	\begin{theorem}[Convergence of general test functions]
	 \label{mainthm2}
	 	Let $H$ be a real symmetric Wigner matrix. Fix $\alpha \in (0,1)$ and set $\eta\deq N^{-\alpha}$. Fix $E \in (-2,2)$. Then the process $(\hat Z(f))_{f \in \cal C^{1,r,s}(\bR)}$
	 converges in the sense of finite-dimensional distributions to $(Z(f))_{f \in \cal C^{1,r,s}(\bR)}$ as $N \to \infty$. That is, for any fixed $p$ and $f_1,f_2,\dots,f_p \in \cal C^{1,r,s}(\bR)$, we have
	\begin{equation}  \label{2.14}
	(\hat{Z}(f_1),\dots,\hat{Z}(f_p)) \overset{d}{\longrightarrow} (Z(f_1),\dots,Z(f_p))
	\end{equation}
		as $N \to \infty$.

	\end{theorem}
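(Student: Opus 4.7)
The plan is to reduce Theorem \ref{mainthm2} to Theorem \ref{mainthm1} via the Helffer-Sj\"ostrand functional calculus. For each $f_k \in \cal C^{1,r,s}(\bR)$, choose an almost-analytic extension $\tilde f_k \col \bC \to \bC$ of compact support, equal to $f_k$ on $\bR$, such that the H\"older regularity of $f_k'$ yields $|\bar\partial \tilde f_k(x + \ii y)| \leq C(|y|^r \wedge 1)\, g(x)$ for some rapidly decaying $g$. A short computation using the change of variables $w = E + \eta z$ gives the representation
\begin{equation*}
\hat Z(f_k) \;=\; \frac{1}{\pi} \int_{\bC} \bar\partial \tilde f_k(z)\, \hat Y(z)\, \dd A(z)\,,
\end{equation*}
where $\hat Y$ is extended from $\bH$ to the lower half-plane by $\hat Y(\bar z) \deq \overline{\hat Y(z)}$. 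By the Cram\'er-Wold device it suffices to prove one-dimensional convergence of an arbitrary real linear combination $\sum_k c_k \hat Z(f_k) = \hat Z\pb{\sum_k c_k f_k}$, so one may take $p=1$.

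To implement this I would introduce a truncation scale $\sigma$ satisfying $(N\eta)^{-1} \ll \sigma \ll 1$ and split the integral into a bulk region $\{|\im z|\geq\sigma\}$ and a near-axis region $\{|\im z|<\sigma\}$. On the bulk region the local semicircle law of \cite{4,5} gives $\hat Y(z) = O(|\im z|^{-1})$ with overwhelming probability together with a Lipschitz estimate $\hat Y(z_1)-\hat Y(z_2) = O(|z_1-z_2|/|\im z|^2)$; a Riemann-sum approximation at a mesh much finer than $\sigma$ then reduces the bulk contribution to a finite linear combination of values $\hat Y(z_i)$, to which Theorem \ref{mainthm1} applies jointly. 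Passing to the limit and using the covariance \eqref{cov} of $Y$ yields a Gaussian for the bulk; an integration by parts in the resulting double integral (using that $(z_1-\bar z_2)^{-2}$ is the $z_1$-derivative of $-(z_1-\bar z_2)^{-1}$, which allows the Helffer-Sj\"ostrand formula to be applied in reverse) reduces its variance to the $H^{1/2}$ form \eqref{2.13}. For the near-axis region, covariance estimates on $\hat Y$ combined with $|\bar\partial \tilde f_k(z)| \lesssim |\im z|^r$ show that the contribution is $o(1)$ as $\sigma\to 0$; finally one couples $\sigma\to 0$ with $N\to\infty$.

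The main obstacle is the near-axis region, where the imaginary parts of the arguments of $G$ may be as small as $(N\eta)^{-1}$, and the local semicircle law provides only the weak pointwise bound $\hat Y(z) = O(|\im z|^{-1})$. For rough test functions with small H\"older exponent $r$ a naive absolute estimate on the near-axis integral is borderline, and one genuinely needs sharper quantitative control on $\bE[\hat Y(z_1)\hat Y(z_2)]$ than Theorem \ref{mainthm1} provides: an effective approach to the limiting kernel of \eqref{cov}, uniform in $z_1, z_2$ down to the scale $(N\eta)^{-1}$. This is why the paper derives a dedicated self-consistent equation \eqref{4.20} for the Helffer-Sj\"ostrand integrand and introduces the two truncation scales $N^{-1} \ll \omega \ll \sigma \ll \eta$: the coarser scale $\sigma$ supports the Riemann-sum reduction in the bulk; on the intermediate annulus $\omega \leq |\im z|\leq \sigma$ the self-consistent equation extracts the leading cancellation; and the innermost region $|\im z|<\omega$ is disposed of by crude norm bounds.
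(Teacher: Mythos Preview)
Your plan diverges from the paper's route in an essential way. The paper does \emph{not} reduce Theorem~\ref{mainthm2} to Theorem~\ref{mainthm1} by a Riemann-sum approximation of the Helffer--Sj\"ostrand integral and an appeal to the joint convergence of finitely many $\hat Y(b_i)$. Instead it bypasses Theorem~\ref{mainthm1} entirely and computes the moments $\bE \langle \tr f_\eta(H)\rangle^n$ directly: after writing $\langle \tr f_\eta(H)\rangle^n$ as an $n$-fold Helffer--Sj\"ostrand integral of $\bE \langle \ul{G(z_1)}\rangle\cdots\langle \ul{G(z_n)}\rangle$, it applies the resolvent identity and cumulant expansion to the integrand itself (the self-consistent equation \eqref{4.20}), and then estimates the resulting integrals region by region. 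The output is a Wick-type recursion \eqref{wick} on the moments, from which Gaussianity follows; no limiting $Y$-process is ever invoked.

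Your Riemann-sum route, as you concede in the third paragraph, has a real gap. Theorem~\ref{mainthm1} (even in the quantitative form of Lemma~\ref{lem4.9}) gives convergence only for \emph{fixed} $b_1,\dots,b_p\in\bH$, with implicit constants that blow up as $\im b_i\to 0$. A Riemann sum that resolves the integral down to a truncation scale $\sigma\to 0$ requires both a growing number of nodes and nodes approaching the real axis, so one cannot simply couple $\sigma\to 0$ with $N\to\infty$ using Theorem~\ref{mainthm1} as a black box. What is actually needed is precisely what the paper's direct moment computation supplies: uniform quantitative control of $\bE\prod_k\langle \ul{G(z_k)}\rangle$ over the whole truncated region, together with cancellation extracted from \eqref{4.20} on the annulus $\omega\le|\im z|\le\sigma$. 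Your third paragraph correctly names this machinery, but naming it is not the same as carrying it out; in particular the ``integration by parts to recover \eqref{2.13}'' step you sketch corresponds to the paper's explicit calculation of $\int\Psi_{3,3}$ in Lemma~\ref{lem:4.7}, which requires the full two-scale truncation to isolate the leading $\psi_{f,3}$ term from the subleading $\psi_{f,1},\psi_{f,2}$ contributions.

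In short: your first two paragraphs propose a genuinely different (and more indirect) argument that stalls at the near-axis region, and your third paragraph accurately describes the paper's remedy without executing it. The paper's approach---direct moment computation via \eqref{4.20}---is what makes the argument go through for all mesoscopic scales and for test functions with merely $1+o(1)$ derivatives.
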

	\begin{remark} \label{rmk1}
		In the complex Hermitian case, Theorems \ref{mainthm1} and \ref{mainthm2} remain true up to an additional factor $1/2$ in the covariances. More precisely, if $H$ is a complex Wigner matrix then \eqref{2.8.11} is replaced by
		\begin{equation} \label{complex1}
		(\hat{Y}(b_1),\dots,\hat{Y}(b_p) ) \overset{d}{\longrightarrow}  \frac{1}{\sqrt{2}}(Y(b_1),\dots,Y(b_p))
		\end{equation}
		and \eqref{2.14} by
		\begin{equation}  \label{complex2}
		(\hat{Z}(f_1),\dots,\hat{Z}(f_p)) \overset{d}{\longrightarrow} \frac{1}{\sqrt{2}}(Z(f_1),\dots,Z(f_p))\,.
		\end{equation}
The minor modifications to the proof in the complex Hermitian case are given in Section \ref{sec:5.2} below.
	\end{remark}

\section{Tools} \label{sec2.5}
The rest of this paper is devoted to the proofs of Theorems \ref{mainthm1} and \ref{mainthm2}.
In this section we collect notations and tools that are used throughout the paper.

Let $M$ be an $N \times N$ matrix. We use the notations ${M_{ij}}^n\equiv (M_{ij})^n$, $M^{*n}\equiv (M^{*})^n$, $M^{*}_{ij}\equiv (M^{*})_{ij} = \ol M_{ji}$. We denote by $\norm{M}$ the operator norm of $M$, and abbreviate $\ul M \deq \frac{1}{N} \tr M$. It is easy to see that $\norm{G(E+\mathrm{i}\eta)} \le |\eta|^{-1}$. For $\sigma>0$, we use $\mathcal{N}(0,\sigma^2)$ to denote the real Gaussian random variable with mean zero and variance $\sigma^2$, and $\mathcal{N}_{\bC}(0,\sigma^2) \eqdist \sigma \cal N_{\C}(0,1)$ the complex Gaussian random variable with mean zero and variance $\sigma^2$. We abbreviate $\langle X \rangle  \deq X-\bE X$ for any random variable $X$ with finite expectation. Finally, if $h$ is a real-valued random variable with finite moments of all order, we denote by $\mathcal{C}_k(h)$ the $k$th cumulant of $h$, i.e.
\begin{equation} \label{2.7}
\mathcal{C}_k(h) \deq  (-\mathrm{i})^k\cdot\big(\partial^k_\lambda \log \bE e^{\mathrm{i}\lambda h} \big) \big{|}_{\lambda=0}\,.
\end{equation}

We now state the cumulant expansion formula that is a central ingredient of the proof. The formula is analogous to the corresponding formula in \cite{7}, and its proof is obtained as a minor modification whose details we omit.
\begin{lemma}[Cumulant expansion] \label{lem:3.1}
	Let $h$ be a real-valued random variable with finite moments of all order, and $f$ a complex-valued smooth function on $\bR$. Then for any fixed $l \in \bN$ we have
	\begin{equation} \label{3.2}
	\bE f(h)h=\sum\limits_{k=0}^l \frac{1}{k!}\mathcal{C}_{k+1}(h)\bE f^{(k)}(h) + R_{l+1}\,,
	\end{equation}
	provided all expectations in (\ref{3.2}) exist. For any fixed $\tau > 0$, the remainder term $R_{l+1}$ satisfies
	\begin{equation} \label{3.0}
	R_{l+1}=\ O(1) \cdot \bE \big|h^{l+2}\cdot\mathbf{1}_{\{|h|>N^{\tau-1/2}\}}\big| \cdot \big\| f^{(l+1)}\big\|_{\infty}+O(1) \cdot \bE |h|^{l+2} \cdot  \sup\limits_{|x| \le N^{\tau-1/2}}\big|f^{(l+1)}(x)\big|\,.
	\end{equation}
\end{lemma}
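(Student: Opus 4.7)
The plan is to derive the algebraic identity \eqref{3.2} by combining a Taylor expansion of $f$ at the origin with the classical moment--cumulant recursion, then extract the quantitative remainder bound \eqref{3.0} by splitting each resulting expectation at the truncation scale $N^{\tau - 1/2}$.

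By Taylor's theorem with integral remainder, $f$ and each of its derivatives $f^{(j)}$ for $j \leq l$ decompose as a polynomial in $h$ with coefficients $f^{(k)}(0)$ plus an integral remainder containing $f^{(l+1)}$. Substituting these into \eqref{3.2} and applying the standard moment--cumulant recursion
\begin{equation*}
\bE h^{n+1} \;=\; \sum_{k=0}^n \binom{n}{k}\, \mathcal{C}_{k+1}(h)\, \bE h^{n-k}
\end{equation*}
to the polynomial parts, one verifies after a double-sum rearrangement that every contribution proportional to $f^{(m)}(0)$ for $m \leq l$ cancels. What survives is a finite sum of expectations of the form $\bE\qa{w(h) \int_0^h \kappa(h,t)\, f^{(l+1)}(t)\, \dd t}$ with polynomial weights $w,\kappa$, whose total degree in $|h|$ is $l + 2$ once one invokes the standard cumulant bound $|\mathcal{C}_{j+1}(h)| \leq C\,\bE|h|^{j+1}$ and the Chebyshev sum inequality $\bE|h|^a \cdot \bE|h|^b \leq \bE|h|^{a+b}$ (applied with the non-negative random variable $|h|$ and the non-decreasing functions $y \mapsto y^a, y^b$).

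To obtain \eqref{3.0}, I would partition each surviving expectation with the indicators $\mathbf{1}_{|h| \leq N^{\tau-1/2}}$ and $\mathbf{1}_{|h| > N^{\tau-1/2}}$. On the bulk event, the integration variable $t$ lies in $[-N^{\tau-1/2}, N^{\tau-1/2}]$, so $|f^{(l+1)}(t)|$ is controlled by $\sup_{|x| \leq N^{\tau-1/2}}|f^{(l+1)}(x)|$; combined with the total-degree-$(l+2)$ polynomial weights, this yields the second term of \eqref{3.0}. On the tail event, $|f^{(l+1)}|$ is replaced by $\|f^{(l+1)}\|_\infty$, and the Chebyshev sum inequality in the form $\bE|h|^{j+1} \cdot \bE\qb{|h|^{l-j+1} \mathbf{1}_{|h|>N^{\tau-1/2}}} \leq \bE\qb{|h|^{l+2} \mathbf{1}_{|h|>N^{\tau-1/2}}}$ produces the first term of \eqref{3.0}. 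The main obstacle is the algebraic step: one must carefully unroll the double sum over $j,k$ that arises when substituting the Taylor expansions into the cumulant expansion, and verify via the moment--cumulant recursion that only the $f^{(l+1)}$ remainder survives. Once this identity is in hand, the bulk/tail split is routine, exactly as in the ``minor modification'' of the argument in \cite{7} to which the excerpt refers.
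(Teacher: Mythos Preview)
The paper does not actually prove this lemma; it refers to \cite{7} and states that the proof ``is obtained as a minor modification whose details we omit.'' Your approach is correct and is precisely the standard argument behind such formulas: Taylor-expand $f$ and each $f^{(k)}$ at the origin with integral remainder, apply the moment--cumulant recursion $\bE h^{n+1}=\sum_{k=0}^n \binom{n}{k}\mathcal{C}_{k+1}(h)\,\bE h^{n-k}$ to see that the polynomial contributions on both sides of \eqref{3.2} match exactly, leaving only terms containing $f^{(l+1)}$ in an integral over $[0,h]$, and then split those surviving expectations at $|h|=N^{\tau-1/2}$. Your use of the correlation inequality for monotone functions (what you call the Chebyshev sum inequality) to absorb the factor $|\mathcal{C}_{k+1}(h)|\leq C\,\bE|h|^{k+1}$ into $\bE\qb{|h|^{l-k+1}\mathbf{1}_{|h|>N^{\tau-1/2}}}$ and produce $\bE\qb{|h|^{l+2}\mathbf{1}_{|h|>N^{\tau-1/2}}}$ is correct, since both $y\mapsto y^{k+1}$ and $y\mapsto y^{l-k+1}\mathbf{1}_{y>N^{\tau-1/2}}$ are nondecreasing on $[0,\infty)$. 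There is no gap; this is exactly the ``minor modification'' the paper alludes to.
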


The bulk of the proof is performed on Wigner matrices satisfying a stronger condition than Definition \ref{def:Wigner} (iii) by having entries with finite moments of all order.
\begin{definition} \label{def:dWigner}
We consider the subset of Wigner matrices obtained from Definition \ref{def:Wigner} by replacing (iii) with
	\begin{itemize}
		\item [(iii)']
		For each $p \in \N$ there exists a constant $C_p$ such that $\E \abs{\sqrt{N} H_{ij}}^p \leq C_p$ for all $N,i,j$.
	\end{itemize}
\end{definition}

We focus on Wigner matrices satisfying Definition \ref{def:dWigner} until Section \ref{sec:5.1}, where we explain how to relax the condition (iii)' to (iii) using a Green function comparison argument; see Section \ref{sec:5.1} for more details.

We shall deduce Theorem \ref{mainthm2} from Theorem \ref{mainthm1} using the Helffer-Sj\"{o}strand formula \cite{9}, which is summarized in the following result whose standard proof we omit.

\begin{lemma}[Helffer-Sj\"{o}strand formula] \label{HS}
Let $f \in {C}^{1,r,s}(\bR)$ with some $r,s > 0$. Let $\tilde{f}$ be the almost analytic extension of $f$ defined by
\begin{equation} \label{tildef_1}
	\tilde{f}(x+\mathrm{i}y)\deq f(x)+\mathrm{i}(f(x+y)-f(x))\,.
\end{equation}
If $f$ is further in $\cal C^2(\bR)$, we can also set
\begin{equation} \label{tildef_2}
\tilde{f}(x+\ii y)\deq f(x) +\ii yf^{\prime}(x)\,.
\end{equation}
Let $\chi \in \cal C^{\infty}_c(\R)$ be a cutoff function satisfying $\chi(0) = 1$, and by a slight abuse of notation write $\chi(z) \equiv \chi (\im z)$.
	Then for any $\lambda \in \bR$ we have
	\begin{equation} \label{4.3}
	f(\lambda)=\frac{1}{\pi}\int_{\bC}\frac{\partial_{\bar{z}}(\tilde{f}(z)\chi(z))}{\lambda-z}\,\dd^2z\,,
	\end{equation}
	where $\partial_{\bar{z}}\deq \frac{1}{2}(\partial_x+\mathrm{i}\partial_y)$ is the antiholomorphic derivative and $\dd^2 z$ the Lebesgue measure on $\C$.
\end{lemma}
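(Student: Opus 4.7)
The plan is to derive \eqref{4.3} from Green's theorem (equivalently, the Cauchy--Pompeiu formula), exploiting the fact that $(\lambda - z)^{-1}$ is, up to a constant, the Cauchy kernel for $\partial_{\bar z}$ on $\bC$. Concretely, I will apply Green's theorem on the domain obtained by removing a small disk about the singularity at $z = \lambda$, and then pass to the limit of vanishing disk radius.

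First I would verify the two properties of $\tilde f$ that drive the argument: (a) $\tilde f(x)=f(x)$ for $x\in\bR$, and (b) $\partial_{\bar z}\tilde f(x+\ii y)$ is bounded, and vanishes appropriately as $y\to 0$. For the definition \eqref{tildef_1}, a direct computation gives
\begin{equation*}
\partial_{\bar z}\tilde f(x+\ii y) \;=\; \tfrac{\ii-1}{2}\pb{f'(x+y)-f'(x)}\;=\;O(\abs{y}^r)
\end{equation*}
using the $r$-H\"older continuity of $f'$, while for \eqref{tildef_2} (when $f\in\cal C^2$) one gets $\partial_{\bar z}\tilde f = \tfrac{\ii}{2}yf''(x)$. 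In either case $\partial_{\bar z}(\tilde f\chi)$ is bounded, compactly supported, and of order $O(\abs{\im z}^r)$ near the real axis.

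Next I would fix $R$ so large that $\supp\chi\subset(-R,R)$, and for small $\epsilon>0$ apply Green's theorem on the punctured domain $\Omega_\epsilon \deq B_R\setminus\ol{B_\epsilon(\lambda)}$, where $B_\rho(\zeta)$ is the open disk of radius $\rho$ about $\zeta$. Since $(\lambda-z)^{-1}$ is holomorphic on $\Omega_\epsilon$, Green's theorem gives
\begin{equation*}
\int_{\Omega_\epsilon}\frac{\partial_{\bar z}\pb{\tilde f(z)\chi(z)}}{\lambda-z}\,\dd^2z \;=\; \frac{1}{2\ii}\qBB{\oint_{\abs{z}=R}-\oint_{\abs{z-\lambda}=\epsilon}}\frac{\tilde f(z)\chi(z)}{\lambda-z}\,\dd z\,,
\end{equation*}
both circles oriented counterclockwise. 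The outer integral vanishes because $\chi$ is supported inside $(-R,R)$. Parametrizing $z=\lambda+\epsilon\me^{\ii\theta}$ on the inner circle gives $\oint_{\abs{z-\lambda}=\epsilon}\tilde f(z)\chi(z)/(\lambda-z)\,\dd z = -\ii\int_0^{2\pi}\tilde f(\lambda+\epsilon\me^{\ii\theta})\chi(\lambda+\epsilon\me^{\ii\theta})\,\dd\theta$, which by continuity tends to $-2\pi\ii\,\tilde f(\lambda)\chi(\lambda) = -2\pi\ii\,f(\lambda)$ as $\epsilon\to 0$ (using $\tilde f(\lambda)=f(\lambda)$ and $\chi(\lambda)=\chi(0)=1$).

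Finally I would let $\epsilon\to 0$. The area integral on $\Omega_\epsilon$ converges to $\int_\bC \partial_{\bar z}(\tilde f\chi)/(\lambda-z)\,\dd^2z$ by dominated convergence, the key input being that $\abs{\partial_{\bar z}(\tilde f\chi)}/\abs{\lambda-z}=O(\abs{\im z}^r/\abs{\lambda-z})$ is integrable near $\lambda$ in $\R^2$ (in polar coordinates the singularity is $\rho^{r-1}$, integrable since $r>0$). Combining the two contributions produces $\int_\bC \partial_{\bar z}(\tilde f\chi)/(\lambda-z)\,\dd^2z = \pi f(\lambda)$, which is \eqref{4.3}. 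The only nontrivial point is local integrability near the singularity, which is precisely what the almost-analyticity estimate $\partial_{\bar z}\tilde f = O(\abs{\im z}^r)$ is designed to secure; the remainder is a routine application of Green's theorem.
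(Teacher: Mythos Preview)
The paper omits the proof of this lemma, calling it standard, so there is nothing to compare against; your approach via the Cauchy--Pompeiu formula is indeed the standard one and is correct in outline. However, your execution has a real gap at the outer boundary.

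You write that the integral over $\abs{z}=R$ vanishes ``because $\chi$ is supported inside $(-R,R)$''. But $\chi(z)\equiv\chi(\im z)$ depends only on the imaginary part, so it does \emph{not} vanish on the circle $\abs{z}=R$: for instance $\chi(\pm R)=\chi(0)=1$. For the same reason your claim that $\partial_{\bar z}(\tilde f\chi)$ is compactly supported is false---it is only supported in a horizontal strip. What actually makes the boundary contribution at infinity vanish (and makes the area integral over $\bC$ absolutely convergent) is the decay hypothesis $\abs{f(x)}+\abs{f'(x)}=O((1+\abs{x})^{-1-s})$ built into the space $\cal C^{1,r,s}(\R)$, which you never invoke. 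The fix is easy: replace the disk by a rectangle $[-L,L]\times[-R,R]$ with $R$ fixed so that $\supp\chi\subset(-R,R)$, and let $L\to\infty$. The top and bottom edges contribute nothing since $\chi=0$ there; on the vertical edges $x=\pm L$ one has $\abs{\tilde f(z)}=O(L^{-1-s})$ and $\abs{\lambda-z}^{-1}=O(L^{-1})$ uniformly in $\abs{y}\le R$, so those contributions are $O(L^{-2-s})\to 0$.

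A smaller point: your estimate $\partial_{\bar z}(\tilde f\chi)=O(\abs{\im z}^r)$ near the real axis holds for the piece $\chi\,\partial_{\bar z}\tilde f$, but the other piece $\tilde f\,\partial_{\bar z}\chi=\tfrac{\ii}{2}\tilde f(z)\chi'(\im z)$ need not vanish at $y=0$ unless $\chi'(0)=0$, which is not assumed. This does not actually matter for your argument, since $1/\abs{\lambda-z}$ is already locally integrable in two dimensions; you only need $\partial_{\bar z}(\tilde f\chi)$ bounded near $\lambda$ to run the dominated convergence step.
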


The following definition introduces a notion of a high-probability bound that is suited for our purposes. It was introduced (in a more general form) in \cite{4}.
\begin{definition}[Stochastic domination] \label{def:2.3} 
	Let $$X=\pb{X^{(N)}(u): N \in \bN, u \in U^{(N)}}\,,\qquad Y=\pb{Y^{(N)}(u): N \in \bN, u \in U^{(N)}}$$ be two families of nonnegative random variables, where $U^{(N)}$ is a possibly $N$-dependent parameter set. We say that $X$ is stochastically dominated by $Y$, uniformly in $u$, if for all (small) $\varepsilon>0$ and (large) $D>0$ we have
	\begin{equation} \label{2.10}
	\sup\limits_{u \in U^{(N)}}	\bP \left[ X^{(N)}(u) > N^{\varepsilon} Y^{(N)}(u) \right] \le N^{-D}
	\end{equation} 
	for large enough $N \ge N_0(\varepsilon,D)$. If $X$ is stochastically dominated by $Y$, we use the notation $X \prec Y$. The stochastic domination will always be uniform in all parameters, such as $z$ and matrix indices, that are not explicitly constant.
\end{definition}

We conclude this section with the local semicircle law for Wigner matrices from \cite{4,5}. For a recent survey of the local semicircle law, see \cite{11}, where the following version of the local semicircle law is stated.
\begin{theorem}[Local semicircle law] \label{refthm1}
	Let $H$ be a Wigner matrix satisfying Definition \ref{def:dWigner}, and define the spectral domain 
	$$ 
	{\bf S}  \deq  \{E+\mathrm{i}\eta: |E| \le 10, 0 <  \eta \le 10 \}\,.
	$$
	Then we have the bounds
	\begin{equation} \label{3.3}
	\max\limits_{i,j}|G_{ij}(z)-\delta_{ij}m(z)| \prec \sqrt{\frac{\im m(z)}{N\eta}}+
\frac{1}{N\eta}
	\end{equation} 
and
	\begin{equation} \label{3.4}
	|\underline{G(z)}-m(z)| \prec \frac{1}{N\eta}\,,
	\end{equation}
	uniformly in $z =   
	E+\mathrm{i}\eta \in {\bf S}$. Moreover, outside the spectral domain we have the stronger estimates
	\begin{equation} \label{outside2}
	\max\limits_{i,j}|G_{ij}(z)-\delta_{ij}m(z)|\prec \frac{1}{\sqrt{N}}
	\end{equation}
	and
	\begin{equation} \label{outside}
	|\underline{G(z)}-m(z)| \prec \frac{1}{N}\,,
	\end{equation}
	uniformly in $z \in  \bb H \setminus \bf S $.
\end{theorem}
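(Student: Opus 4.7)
The plan is to prove the local semicircle law via the standard Schur complement / self-consistent equation approach of \cite{4,5}. For each $i$, the Schur (Feshbach) complement formula reads
$$
G_{ii}(z) \;=\; \frac{1}{H_{ii} - z - \sum_{k,l}^{(i)} H_{ik} G^{(i)}_{kl}(z) H_{li}}\,,
$$
where $G^{(i)}$ is the resolvent of the minor $H^{(i)}$ obtained by deleting the $i$-th row and column of $H$, and the sum runs over $k,l \neq i$. Since $(H_{ik})_{k \neq i}$ is independent of $H^{(i)}$, the conditional expectation of the quadratic form equals $\underline{G^{(i)}(z)}$, and interlacing gives $\underline{G^{(i)}} = \underline{G} + O((N\eta)^{-1})$. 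Thus, if the quadratic form concentrates around its mean, one gets $G_{ii} \approx -1/(z + \underline{G})$; averaging over $i$ yields an approximate version of the defining equation $m = -1/(z+m)$ of the semicircle Stieltjes transform.

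The key probabilistic input is a large deviation estimate for the quadratic form. Writing
$$
Z_i \;\deq\; \sum_{k \neq l}^{(i)} H_{ik} G^{(i)}_{kl} H_{li} + \sum_k^{(i)} \pb{\abs{H_{ik}}^2 - \tfrac{1}{N}} G^{(i)}_{kk}
$$
for the fluctuation around $\underline{G^{(i)}}$, a Hanson--Wright type inequality (available under Definition \ref{def:dWigner}, since all moments exist) combined with the Ward identity $\sum_{k,l}\abs{G^{(i)}_{kl}}^2 = \eta^{-1}\im \tr G^{(i)}$ gives $\abs{Z_i} \prec \sqrt{\im m/(N\eta)}$, conditional on a bootstrap event on which $\abs{\underline{G}-m}$ is small. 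An analogous off-diagonal Schur expansion yields $\abs{G_{ij}} \prec \sqrt{\im m/(N\eta)}$ for $i \ne j$.

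Combining these with stability of the self-consistent equation $w = -1/(z+w)$ near $m(z)$ (which, in the bulk $E \in (-2,2)$, gives $\abs{1 - (z+m)^{-2}} \gtrsim 1$; square-root degeneration at the edges is handled by the sharper stability lemma of \cite{4}) turns the approximate equation $\underline{G} + 1/(z + \underline{G}) = O\pb{\max_i \abs{Z_i}/N + \Lambda_o^2}$ into the bounds \eqref{3.3} and \eqref{3.4}. The entire argument is run under a bootstrap hypothesis $\abs{\underline{G}-m} \le (\log N)^{-1}$ propagated in $\eta$: at $\eta = 10$ the trivial bound $\norm{G(z)} \le \eta^{-1}$ makes the hypothesis manifest, and one decreases $\eta$ along a fine mesh in $\mathbf{S}$, using $\partial_\eta \underline{G} = O(\eta^{-2})$ and a union bound to upgrade the pointwise high-probability estimate to uniform control over $\mathbf{S}$. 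The exterior bounds \eqref{outside2} and \eqref{outside} follow from the same scheme, using the much easier crude bounds $\abs{m} \lesssim 1$ and $\norm{G} \lesssim 1$ valid for $z \notin \mathbf{S}$.

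The principal obstacle is obtaining the sharp rate $(N\eta)^{-1}$ for $\abs{\underline{G}-m}$ in \eqref{3.4}, which is strictly smaller than the typical fluctuation $\sqrt{\im m/(N\eta)}$ of an individual $G_{ii} - m$. Extracting this improvement uses that $\underline{G}-m$ is an average of $N$ nearly-decorrelated quantities: one expands $G_{ii} - m = -m^2 (Z_i + H_{ii}) + (\textrm{higher order})$ and shows, via a fluctuation averaging argument (a high-moment estimate on $N^{-1}\sum_i Z_i$ obtained by cumulant expansion and delicate combinatorial resummation), that the average is smaller by a factor $(N\eta)^{-1/2}$ than an individual term. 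Implementing this fluctuation averaging carefully, while maintaining optimal dependence on $\eta$ down to the microscopic scale $N^{-1}$, is the technical heart of \cite{4,5} and the main work required to complete the proof.
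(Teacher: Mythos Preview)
The paper does not prove Theorem~\ref{refthm1}; it is quoted as a black box from \cite{4,5} (with the precise formulation taken from the survey \cite{11}). Your sketch accurately summarizes the Schur complement / bootstrap / fluctuation-averaging strategy of those references, so there is nothing to compare against in the present paper.
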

	
\section{Convergence of the resolvent} \label{sec3}

In this section we prove the following weaker form of Theorem \ref{mainthm1}.
\begin{theorem}\label{weakthm1}
Theorem \ref{mainthm1} holds for Wigner matrices $H$ satisfying Definition \ref{def:dWigner}, and the convergence also holds in the sense of moments. 

\end{theorem}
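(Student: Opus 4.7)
Since the complex Gaussian $Y$ is moment-determinate, convergence of all joint moments of $\hat Y$ simultaneously yields both the finite-dimensional distributional convergence \eqref{2.8.11} and the claimed convergence in moments. Set $\hat Y^*(b) \deq \overline{\hat Y(b)} = N\eta\,(\underline{G(E+\bar b\eta)}-m(E+\bar b\eta))$. A standard cumulant-expansion argument yields $\bE\,\underline{G(z)}-m(z) = O(N^{-1})$, so the deterministic shift $\hat Y(b) - N\eta\langle\underline{G(E+b\eta)}\rangle$ is $O(\eta)$ and vanishes in the limit; hence it suffices to show, for arbitrary $n,m\ge 0$ and $b_1,\dots,b_n,b'_1,\dots,b'_m\in\bb H$, that
\begin{equation*}
\cal M_{n,m}\deq (N\eta)^{n+m}\,\bE \prod_{\alpha=1}^n \langle\underline{G(E+\bar b_\alpha\eta)}\rangle \prod_{\beta=1}^m \langle\underline{G(E+b'_\beta\eta)}\rangle
\end{equation*}
converges to the Isserlis sum over perfect matchings of $\{1,\dots,n\}$ with $\{1,\dots,m\}$ with two-point kernel $-2/(\bar b_\alpha-b'_\beta)^2$, and to $0$ otherwise.

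The engine is a family of self-consistent equations (the paper's \eqref{3.18}) for $\cal M_{n,m}$, derived in the spirit of \cite{7}. Starting from the resolvent identity $z\,\underline{G(z)}=-1+N^{-1}\sum_{j,k} H_{jk} G_{kj}(z)$ applied to one factor inside $\cal M_{n,m}$, each resulting expectation $\bE\, H_{jk}(\cdots)$ is expanded via Lemma \ref{lem:3.1}. The $k=1$ term Wick-contracts $H_{jk}$ with another resolvent factor: self-contractions close the equation by contributing a scalar coefficient multiplying $\cal M_{n,m}$ itself (read off from \eqref{3.4}), while cross-contractions produce $\cal M_{n-1,m-1}$ or its variants multiplied by a two-point kernel. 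Using the explicit form of $m(z)$ and the scaling $z=E+b\eta$, the kernel evaluates asymptotically to $-2/(\bar b_\alpha-b'_\beta)^2$ for one-$G$-one-$G^*$ contractions, and to $O(\eta^2)$ (hence vanishing after accounting for the normalisation $(N\eta)^{n+m}$) for same-sign contractions, consistent with the second identity of \eqref{cov}. An induction on $n+m$ then reproduces Wick's formula and identifies the limit.

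The substantial difficulty is showing that the error terms from cumulants of order $k\ge 2$ and from the truncation remainder $R_{l+1}$ in \eqref{3.0} are $o(1)$ uniformly for $\eta\gg N^{-1}$. A naive substitution of the entrywise bounds \eqref{3.3}--\eqref{3.4} closes the estimate only for $\eta\gg N^{-1/2}$, as in \cite{7}. Reaching all mesoscopic scales requires the three refinements flagged in the introduction: the high-probability bounds on entries of $G^k$ (Lemma \ref{prop4.4}), which deliver an extra $(N\eta)^{-1}$ per cumulant order (each $\partial_{H_{jk}}$ of a resolvent entry produces sums of entries of higher powers of $G$); the auxiliary self-consistent equation for $\bE\,\underline{G^k}$ (Lemma \ref{lem3.11}), which supplies the deterministic coefficients arising in the $k\ge 2$ contributions; and the careful splitting of the cumulant remainder in Lemma \ref{termL}(i), separating a rare large-deviation piece controlled by finite moments of $\sqrt N H_{ij}$ from a bulk piece where additional Taylor expansion of resolvent entries gains further smallness. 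The heart of the proof is to combine these into a bound on the schematic error $N^{-1}\sum_{j,k} \bE\bigl|\partial_{jk}^r G_{kj}(z)\cdot(\text{other centred factors})\bigr|$ for $r\ge 2$ of order $o((N\eta)^{-(n+m)})$ uniformly in the spectral parameters; once this is achieved, the self-consistent equation closes, the induction on $n+m$ identifies the limit with the Isserlis sum, and Theorem \ref{weakthm1} follows.
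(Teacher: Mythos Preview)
Your plan is the paper's proof: self-consistent equations \eqref{3.18} (and their multi-parameter extension in Lemma \ref{lem4.9}) via Lemma \ref{lem:3.1}, error control through Lemmas \ref{prop4.4}, \ref{termL}, \ref{lem3.11}, and induction on $n+m$ to recover the Isserlis sum. Two small corrections to your narrative: the centring $\bE\,\underline{G}-m$ is not $O(N^{-1})$ by a ``standard'' argument but only $O(N^{\alpha-1-c_0})$, established via its own self-consistent equation (Lemma \ref{mainlem}(ii), Section \ref{sec3.3}), which still suffices; and same-sign contractions are not suppressed by an ``$O(\eta^2)$ kernel'' --- rather, the term $\frac{2(m-1)}{N^2T}\bE\langle\underline{G^*}\rangle^n\langle\underline{G}\rangle^{m-2}\underline{G^3}$ is bounded by $O(N^{(m+n)(\alpha-1)-c_0})$ using Lemma \ref{lem3.11}, and the pure-$G$ moments $\bE\langle\underline{G}\rangle^m=O(N^{m(\alpha-1)-c_0})$ are handled separately from the $G^*$-free analogue \eqref{4.55} of \eqref{3.18}, which simply has no leading term.
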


For the statements of the following results we abbreviate $G \equiv G(E+\mathrm{i}\eta)$ and $[\underline{G}] \deq \underline{G}-m(E+\mathrm{i}\eta)$. The following result is a special case of Theorem \ref{weakthm1}.
\begin{proposition} \label{prop3.3}
Under the assumptions of Theorem \ref{weakthm1} we have
	\begin{equation} \label{2.9}
N \eta \,  [\underline{G}] \overset{d}{\longrightarrow} \mathcal{N}_{\bC}\Big(0,\,\frac{1}{2}\Big)
	\end{equation}
	as $N \to \infty$. The convergence also holds in the sense of moments. 
\end{proposition}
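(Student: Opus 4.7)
The approach is the method of moments. I will compute the joint moments $\E [\ul G]^p [\ul{G^*}]^q$ for all $p, q \ge 0$ and show that, rescaled by $(N\eta)^{p+q}$, they converge to the Wick moments of a standard complex Gaussian of variance $\tfrac12$; since such a Gaussian is determined by its moments, this yields both weak convergence and convergence in moments. The complex-Gaussian Wick rule is generated by the mirror recursions $\E V^{p+1}\bar V^q = (q/2)\,\E V^p\bar V^{q-1}$ and $\E V^p\bar V^{q+1} = (p/2)\,\E V^{p-1}\bar V^q$, so the concrete target is the pair of asymptotic identities
\begin{equation*}
\E\, [\ul G]^{p+1}\, [\ul{G^*}]^q \;=\; \frac{q}{2(N\eta)^2}\, \E\, [\ul G]^{p}\, [\ul{G^*}]^{q-1} + o\bigl((N\eta)^{-(p+q+1)}\bigr)
\end{equation*}
and its complex conjugate, which together determine all limiting moments by induction on $p+q$, starting from the base case $\E[\ul G] = O(1/N) = o((N\eta)^{-1})$ (obtainable either from standard bounds on the expected Stieltjes transform of Wigner matrices or directly from the identity below with $P = 1$).

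To derive the recursion I start from the Schwinger--Dyson-type identity $1 + z\ul G = \tfrac{1}{N}\sum_{i,a} H_{ia}G_{ai}$, obtained by summing the diagonal entries of $HG = I + zG$. Multiplying by $P = [\ul G]^p[\ul{G^*}]^q$, taking expectation, and applying the cumulant expansion of Lemma~\ref{lem:3.1} to each $\E\, H_{ia}\,(G_{ai}P)$ viewed as a function of the independent entry $H_{ia}$ (truncated at a large fixed order $\ell$), the second-cumulant contribution splits in two. Derivatives hitting $G_{ai}$, via $\partial G_{ai}/\partial H_{ia} = -G_{aa}G_{ii} - G_{ai}^2$, combine with the original $1 + z\ul G$ and the semicircle identity $1 + zm + m^2 = 0$ to produce $\E\bigl((z+2m)[\ul G] + [\ul G]^2\bigr)P$ plus a subleading $-N^{-1}\E\ul{G^2}P$. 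Derivatives hitting $P$, via $\partial\ul G/\partial H_{ia} = -\tfrac{2}{N}(G^2)_{ai}$ and its analogue for $\ul{G^*}$, produce
\begin{equation*}
-\frac{2p}{N^2}\,\ul{G^3}\,[\ul G]^{p-1}[\ul{G^*}]^q \;-\; \frac{2q}{N^2}\,\ul{G\,G^{*2}}\,[\ul G]^{p}[\ul{G^*}]^{q-1}.
\end{equation*}
Since $\ul{G^3}\approx m''(z)/2$ is of order one in the bulk, the first term contributes only at size $\eta^2$ times the left-hand side and is absorbed into the error. For the second, a partial-fraction expansion gives the leading asymptotic $\ul{G\,G^{*2}}\approx -i\,\im m(z)/(2\eta^2)$, and since $z+2m \to 2i\im m$ in the bulk limit, dividing by $z+2m$ delivers exactly the Wick factor $q/2$.

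The main obstacle is controlling the remainder $R_{\ell+1}$ in the cumulant expansion and the higher-cumulant terms uniformly down to $\eta$ as small as $N^{-1+\epsilon}$. Each additional $H_{ia}$-derivative on a product of resolvents can potentially introduce a factor of $\eta^{-1}$ via matrix elements of $G^k$, and a direct argument in the spirit of \cite{7} already fails for $\eta \ll N^{-1/2}$. The rescue is precisely the technical machinery announced in the introduction: the sharp split of $R_{\ell+1}$ provided by Lemma~\ref{termL}(i); the improved high-probability entrywise bounds on the powers $G^k$ from Lemma~\ref{prop4.4}, which beat the naive $\eta^{-(k-1)}$ estimate; and the auxiliary self-consistent equation for $\E\ul{G^k}$ of Lemma~\ref{lem3.11}, which replaces traces of resolvent powers by their deterministic limits with an acceptable error. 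Fed into the cumulant expansion and truncated at sufficiently large $\ell$, these bounds show that every error term is of size $o((N\eta)^{-(p+q+1)})$ uniformly for $\eta = N^{-\alpha}$ with $\alpha \in (0, 1)$, closing the induction.
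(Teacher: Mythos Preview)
Your proof follows essentially the same strategy as the paper: derive a self-consistent equation via the cumulant expansion (Lemma~\ref{lem:3.1}), extract the Wick recursion from the leading term $\ul{GG^{*2}}$, and control all remaining contributions using exactly the lemmas you cite (Lemmas~\ref{prop4.4}, \ref{termL}, \ref{lem3.11}). The only organisational difference is that the paper centres at $\E\ul G$ (working with $\langle\ul G\rangle$ and dividing by $T=-z-2\E\ul G$) and then proves $\E\ul G - m = O(N^{\alpha-1-c_0})$ separately as Lemma~\ref{mainlem}~(ii), whereas you centre directly at $m$ and divide by $z+2m$; the two routes are interchangeable since $|T+z+2m| = 2|\E\ul G - m| = O(N^{\alpha-1-c_0})$ is negligible against $|T|\asymp 1$ in the bulk.

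Two small inaccuracies are worth flagging. First, your base case $\E[\ul G] = O(1/N)$ overstates what is true and what is needed: the correct bound is $O(N^{\alpha-1-c_0})$, i.e.\ exactly Lemma~\ref{mainlem}~(ii), and your alternative derivation ``from the identity with $P=1$'' is precisely the paper's proof of that lemma in Section~\ref{sec3.3}. Second, $\ul{G^3}$ is \emph{not} of order one when $\alpha$ is not small --- Lemma~\ref{lem3.11} only yields $\E\ul{G^3} = O(N^{2\alpha-c_0})$ --- but this weaker bound still gives $\frac{2p}{N^2}\,\E\ul{G^3}\,[\ul G]^{p-1}[\ul{G^*}]^q = O\bigl(N^{(p+q+1)(\alpha-1)-c_0}\bigr)$ after splitting $\ul{G^3}=\E\ul{G^3}+\langle\ul{G^3}\rangle$ and using \eqref{center2}, so the term remains subleading and your conclusion stands.
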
 

The main work in this section is to show the one-dimensional case from Proposition \ref{prop3.3}, whose proof can easily be extended to the general case of Theorem \ref{weakthm1} (see Section \ref{sec3.4} below). Recall the notation $\langle X \rangle\deq X-\bE X$. Proposition \ref{prop3.3} is a direct consequence of the following lemma.

\begin{lemma} \label{mainlem}
	Under the assumptions of Theorem \ref{weakthm1} the following holds.
	\begin{enumerate}
		\item For fixed $m, n \in \bN$ satisfying $m+n \ge 2$ we have
		\begin{equation} \label{result}
		\bE \langle \underline{G^{*}} \rangle^n \langle \underline{G} \rangle^m=
\begin{cases}
\dfrac{n!}{2^n}N^{2n(\alpha-1)}+O(N^{2n(\alpha-1)-c_0}) & \txt{if } m=n
\\
O(N^{(m+n)(\alpha-1)-c_0}) & \txt{if } m \ne n\,,
\end{cases}
		\end{equation}
		where   
	\begin{equation} \label{c_0}
		c_0 \equiv c_0(\alpha)\deq \frac{1}{3}\min\{\alpha,1-\alpha\}\,.
	\end{equation}
	\item We have
\begin{equation} \label{3.74}
[\underline{G}]-\langle \underline{G} \rangle=\bE \underline{G}-m(E+\mathrm{i}\eta) =O(N^{\alpha-1-c_0})\,,
\end{equation}
 with $c_0$ defined in (\ref{c_0}).
	\end{enumerate}
\end{lemma}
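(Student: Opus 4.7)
The plan is to derive a closed self-consistent equation for $W_{n,m} \deq \bE \langle\underline{G^*}\rangle^n \langle\underline{G}\rangle^m$ by combining the resolvent identity with Lemma~\ref{lem:3.1}, and then iterate it by induction on $s \deq n+m$. I may assume $m\geq 1$ (the symmetric case $m=0, n\geq 1$ is handled from the analogous identity for $\underline{G^*}$). Starting from $1 + z\underline{G} = \underline{HG}$, I centre and multiply by $P \deq \langle\underline{G}\rangle^{m-1}\langle\underline{G^*}\rangle^n$, then take expectation to get $zW_{n,m} = \bE\langle\underline{HG}\rangle P$. Expanding $\bE\underline{HG}\,P = N^{-1}\sum_{ij}\bE H_{ij}G_{ji}P$ via \eqref{3.2} reduces the problem to an algebraic identity at leading cumulant order, modulo controllable corrections.

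The second-cumulant contribution splits into three pieces according to whether $\partial_{H_{ij}}$ lands on the explicit $G_{ji}$, on a $\langle\underline{G}\rangle$-factor, or on a $\langle\underline{G^*}\rangle$-factor. Differentiating $G_{ji}$ yields $-\bE(\underline{G}^2 + N^{-1}\underline{G^2})P$; writing $\underline{G} = m+[\underline{G}]$ and invoking $m^2 + zm + 1 = 0$ combines with $zW_{n,m}$ on the left to produce the coefficient $z+2m$ in front of $W_{n,m}$, after centering against $\bE P$ to cancel deterministic constants. Differentiating $\langle\underline{G^*}\rangle^n$ and $\langle\underline{G}\rangle^{m-1}$ yields $-\frac{2n}{N^2}\bE\underline{G^{*2}G}\,\langle\underline{G}\rangle^{m-1}\langle\underline{G^*}\rangle^{n-1}$ and $-\frac{2(m-1)}{N^2}\bE\underline{G^3}\,\langle\underline{G}\rangle^{m-2}\langle\underline{G^*}\rangle^n$, respectively. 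By the spectral theorem the deterministic limit of $\underline{G^{*2}G}$ equals $\partial_{\bar z}\qb{(m(z)-m(\bar z))/(z-\bar z)} = -\ii\,\im m/(2\eta^2) + O(\eta^{-1})$, while $z+2m = m-1/m = 2\ii\,\im m + o(1)$ in the bulk (valid since $|m(E)|=1$ for $|E|<2$). Consequently the coefficient of $W_{n-1,m-1}$ simplifies to exactly $n\sigma^2$ with $\sigma^2 \deq (2N^2\eta^2)^{-1}$, matching Wick's theorem for a complex Gaussian of variance $\sigma^2$; induction on $s$ then yields $W_{n,n} = n!\,\sigma^{2n} + (\txt{smaller}) = n!\,N^{2n(\alpha-1)}/2^n + (\txt{smaller})$. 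The coefficient of $W_{n,m-2}$, by contrast, has size $N^{-2}\cdot\underline{G^3}_{\txt{det}} = O(N^{-2})$, smaller than $\sigma^2$ by the factor $\eta^2 = N^{-2\alpha}$; combined with the symmetric contribution leading to $W_{n-2,m}$, this forces $W_{n,m} = O(N^{(n+m)(\alpha-1)-c_0})$ whenever $n\neq m$.

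The main obstacle is to bound the higher-cumulant terms and the local-law errors by $N^{(n+m)(\alpha-1)-c_0}$ uniformly for all $\alpha\in(0,1)$. A naive combination of the local semicircle law (Theorem~\ref{refthm1}) with the strategy of \cite{7} is sharp enough only when $\eta\gg N^{-1/2}$. To cover arbitrary $\alpha\in(0,1)$, the fourth-cumulant contribution --- the most delicate as $\alpha\to 1$ --- must be handled via the refined cumulant-remainder bound in Lemma~\ref{termL}(i), which preserves the $\sqrt{\im m/(N\eta)}$ factor from \eqref{3.3}. The third-cumulant terms, as well as the replacement of $\underline{G^k}$ and $\underline{G^{*k}G^l}$ by their deterministic limits, are controlled via the entry bounds on $G^k$ in Lemma~\ref{prop4.4} and the auxiliary self-consistent equation for $\bE\underline{G^k}$ in Lemma~\ref{lem3.11}. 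The remainder $R_{l+1}$ of \eqref{3.0} is made negligible by choosing $l$ large and invoking Definition~\ref{def:dWigner}(iii)'.

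For part~(ii), I specialise the same expansion to $P\equiv 1$ (the case $m=n=0$): the identity becomes $\epsilon^2 + (2m+z)\epsilon = -W_{0,2} - m'(z)/N + O(\txt{higher cumulants})$, where $\epsilon \deq \bE\underline{G}-m$. Because $|2m+z|\asymp 1$ in the bulk and, by part~(i) applied at $s=2$, the right-hand side is $O(N^{2(\alpha-1)-c_0}) + O(N^{-1})$, solving for $\epsilon$ yields $\epsilon = O(N^{\alpha-1-c_0})$, using $c_0\leq\alpha$ from \eqref{c_0}. Parts~(i) and~(ii) are propagated together in the induction on $s$.
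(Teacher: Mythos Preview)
Your strategy coincides with the paper's: resolvent identity plus cumulant expansion yielding \eqref{3.18}, induction on $m+n$ with the error terms controlled by Lemmas~\ref{termL}--\ref{lem3.11}, and the leading pairing term extracted via the algebraic identity \eqref{eqn: 2.82} to produce the recursion \eqref{eqn: 2.85}. However, you have swapped the roles of two key ingredients. Lemma~\ref{termL}(i) controls the \emph{remainder} $R_{l+1}$ of the cumulant expansion, not the fourth cumulant, and it is precisely this remainder---not any finite-order cumulant---whose naive estimate would force $\eta\gg N^{-1/2}$: a crude bound on $\|\partial_{ji}^{l+1}f_{ij}\|_\infty$ via $|G_{ab}|\le\eta^{-1}$ blows up too fast in $l$ to be killed by $\bE|H_{ji}|^{l+2}$, and ``choosing $l$ large'' alone does not suffice; the fix is the resolvent-perturbation argument \eqref{eqn: A.11}--\eqref{eqn: A.14}, which transfers the local-law entry bounds to the supremum over $|x|\le N^{\tau-1/2}$ in \eqref{tau}. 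Among the finite-order cumulants, handled by Lemma~\ref{termL}(ii) using Lemma~\ref{prop4.4}, it is the \emph{third}-order term $J_2$ that is tightest (see the detailed Steps~1--3 there) and, together with the $\langle\underline{G}\rangle^{m+1}$ term in \eqref{3.18}, dictates the form of $c_0$; the fourth-order contribution $J_3$ is easier because of the extra factor $N^{-1/2}$ from the cumulant. Also, your claim that $N^{-2}\underline{G^3}_{\mathrm{det}}=O(N^{-2})$ is not what is actually used: one needs $\bE\underline{G^3}=O(N^{2\alpha-c_0})$ from Lemma~\ref{lem3.11}, which is much larger than $O(1)$ when $\alpha$ is close to $1$ but still suffices after the $N^{-2}$ prefactor.
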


As advertised, Proposition \ref{prop3.3} follows immediately from Lemma \ref{mainlem}. Indeed, suppose that Lemma \ref{mainlem} holds. From \eqref{result} we find that $N^{1-\alpha}\langle \underline{G} \rangle$ converges to $\mathcal{N}_{\bC}(0, 1/2)$ in the sense of moments, and hence also in distribution.
Proposition \ref{prop3.3} therefore follows from \eqref{3.74}.

The bulk of this section, Sections \ref{sec4.1}--\ref{sec3.3}, is devoted to the proof of Lemma \ref{mainlem}.

\subsection{Preliminary estimates on $G$} \label{sec4.1}
We begin with estimates on the entries of $G^k$. For $k \geq 2$, the bounds provided by the following lemma are significantly better than those obtained for $G^k$ by applying the estimate \eqref{3.3} to each entry of the matrix product. For instance, a straightforward application of \eqref{3.3} yields $\abs{(G^k)_{ij}} \prec N^{k (1 + \alpha)/2 - 1}$, which is not enough to conclude the proof of Lemma \ref{mainlem}. The following result yields bounds that grow slower with $k$ and in addition provide extra smallness for the offdiagonal entries of $G^k$. Both of these features are necessary for the proof of Lemma \ref{mainlem}.

\begin{lemma} \label{prop4.4}
	Under the conditions of Theorem \ref{weakthm1}, for any fixed $k \in \bN_{+}$ we have
	\begin{equation} \label{center2}
	\big|\big\langle \ul{G^{k}} \big\rangle \big|\prec N^{k\alpha -1}
	\end{equation}
as well as
\begin{equation} \label{410}
\big|\big(G^k\big)_{ij}\big|\prec
\begin{cases}
N^{(k-1)\alpha} & \txt{if } i = j
\\
N^{(k-1/2)\alpha-1/2} & \txt{if } i \neq j\,,
\end{cases}
\end{equation}
uniformly in $i,j$.
\end{lemma}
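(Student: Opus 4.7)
The plan is to reduce all three bounds to Cauchy's integral formula, thereby leveraging the entrywise control on $G$ furnished by Theorem \ref{refthm1} to control $G^k$ and $\ul{G^k}$. Iterating $\partial_z G = G^2$ gives $G^k = \frac{1}{(k-1)!}\partial_z^{k-1} G$, and since $G_{ij}(\cdot)$ and $\ul{G}(\cdot)$ are holomorphic on $\bb H$, for any simple closed contour $\gamma$ encircling $z$ in $\bb H$ we have the representations
\begin{equation*}
(G^k)_{ij}(z) \;=\; \frac{1}{2\pi \ii}\oint_\gamma \frac{G_{ij}(w)}{(w-z)^k}\,\dd w\,,\qquad \ul{G^k}(z) \;=\; \frac{1}{2\pi \ii}\oint_\gamma \frac{\ul{G}(w)}{(w-z)^k}\,\dd w\,.
\end{equation*}

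I would take $\gamma \deq \{w \in \C \col |w-z| = \eta/2\}$. This choice keeps $\im w \in [\eta/2, 3\eta/2]$, so the local semicircle law remains effective with an essentially unchanged spectral scale, and for $N$ large it keeps $\re w$ inside the bulk $(-2,2)$, which in turn ensures $\im m(w) \asymp 1$. The estimates \eqref{3.3} and \eqref{3.4} then yield, uniformly in $w \in \gamma$,
\begin{equation*}
|G_{ii}(w)| \prec 1\,,\qquad |G_{ij}(w)| \prec (N\eta)^{-1/2}\ \ (i \neq j)\,,\qquad |\ul{G}(w) - m(w)| \prec (N\eta)^{-1}\,.
\end{equation*}
The deterministic bound $|\ul{G}(w) - m(w)| \le C/\eta$ allows one to transfer the stochastic domination of $\ul{G} - m$ to its expectation (up to an arbitrarily small polynomial loss), so one also obtains $|\langle \ul{G}(w)\rangle| \prec (N\eta)^{-1}$.

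Inserting these estimates into the Cauchy integrals and using that $|w-z| = \eta/2$ on $\gamma$ while $\gamma$ has length $\pi \eta$, I would obtain
\begin{equation*}
|(G^k)_{ii}| \prec \eta^{-(k-1)}\,,\qquad |(G^k)_{ij}| \prec (N\eta)^{-1/2}\eta^{-(k-1)}\ \ (i\neq j)\,,\qquad |\langle \ul{G^k}\rangle| \prec (N\eta^k)^{-1}\,,
\end{equation*}
which on recalling $\eta = N^{-\alpha}$ are precisely \eqref{410} and \eqref{center2}.

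The only technical point is that the stochastic domination from Theorem \ref{refthm1} holds pointwise in $w$, whereas the integrand in the Cauchy formula must be controlled uniformly along $\gamma$. I would handle this in the standard fashion: discretize $\gamma$ into $N^{100}$ equispaced points, apply a union bound over this net via Definition \ref{def:2.3}, and interpolate to the full contour using the deterministic Lipschitz bounds $\|\partial_w G(w)\| \le (\im w)^{-2}$ and $|\partial_w \ul{G}(w)| \le (\im w)^{-2}$. This costs at most an $N^{\varepsilon}$ factor, which is absorbed by $\prec$. I do not foresee any serious obstacle: the substance of the argument is contained in the choice of contour and a direct appeal to the local semicircle law, and the slower growth in $k$ and the extra smallness for the off-diagonal entries both arise automatically from the size of $G_{ii}$ versus $G_{ij}$ on $\gamma$.
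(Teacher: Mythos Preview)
Your proof is correct and takes a genuinely different route from the paper's own argument. The paper proves \eqref{center2} (and sketches \eqref{410}) via the Helffer--Sj\"ostrand formula: writing $G^k = \eta^{-k} f\bigl((H-E)/\eta\bigr)$ with $f(x) = \bigl((x+\ii)/(x^2+1)\bigr)^k$, it expresses $f_\eta(H)$ as a two-dimensional integral of resolvents using an almost-analytic extension and a cutoff $\chi$, and then bounds each of the three resulting terms separately by invoking \eqref{3.4} and \eqref{outside}. You instead exploit directly that $G(\cdot)$ is holomorphic on $\bb H$ and that $G^k = \frac{1}{(k-1)!}\partial_z^{k-1}G$, so that a single Cauchy contour integral over a circle of radius $\eta/2$ around $z$ does the job. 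Both arguments ultimately feed the local semicircle law at nearby spectral parameters into an integral representation of $G^k$; yours is shorter and avoids the bookkeeping of the Helffer--Sj\"ostrand cutoff, while the paper's approach has the virtue of rehearsing exactly the machinery that becomes indispensable in Section~\ref{sec4}, where the test functions are merely $\cal C^{1,r,s}$ and no complex-analytic shortcut is available. The net-plus-Lipschitz argument you outline for passing from pointwise to uniform stochastic domination on $\gamma$ is the standard one (cf.\ \cite[Remark 2.7]{11}) and is adequate here.
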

\begin{proof}
We first prove  \eqref{center2}. The case $k = 1$ is easy. Indeed, from (\ref{3.4}) we get
	\begin{equation} \label{center1}
	\left| \langle \underline{G} \rangle \right| \le \left| [ \underline{G} ]\right|+ \left| \bE [\underline{G}] \right| \prec N^{\alpha-1}
	\end{equation}
	as desired, where we used the definition of $\prec$ combined with the trivial bound $\norm{G} \leq N^\alpha$ to estimate $\E [\ul G]$.

Next, for $k \ge 2$ we write
	\begin{equation}
	G^k=\left(\frac{(H-E)/\eta+\mathrm{i}}{(H-E)^2/\eta^2+1}\right)^k \cdot \eta^{-k} = f\bigg(\frac{H-E}{\eta}\bigg) \cdot \eta^{-k}\,,
	\end{equation}
	where we defined $f(x) \deq \p{\frac{x + \ii}{x^2 + 1}}^k$.
	Note that $f: \bR \to \bC$ is smooth, and for any $n \in \bN$, $|f^{(n)}(x)|=O((1 + |x|)^{-2})$. We define $\tilde f$ as in \eqref{tildef_2} and let $\chi$ be as in Lemma \ref{HS} and satisfy $\chi(y) = 1$ for $\abs{y} \leq 1$.
Writing $f_{\eta}(x)\deq f\big(\frac{x-E}{\eta}\big)$, we obtain from Lemma \ref{HS} that
	$$
	f_{\eta}(H)= \frac{1}{\pi}\int_{\bC}\frac{\partial_{\bar{z}}(\tilde{f}_{\eta}(z)\chi(z/\eta))}{H-z}\,\mathrm{d}^2z\,,
	$$
so that
	\begin{equation}
	\big\langle \ul{G^{k}} \big\rangle =\frac{1}{2\pi\eta^k}\int_{\bR^2} \bigg(\ii yf^{\prime\prime}_{\eta}(x)\chi(y/\eta)+\frac{\ii}{\eta} f_{\eta}(x) \chi^{\prime}(y/\eta)-\frac{y}{\eta}f^{\prime}_{\eta}(x)\chi^{\prime}(y/\eta)\bigg)\big\langle \ul{ G(x+\ii y) }\big\rangle \,\dd x \, \dd y\,.
	\end{equation}
In order to estimate the right-hand side, we use (\ref{3.4}) and (\ref{outside}) to obtain
\begin{equation} \label{G_unif_bound}
\big|\big\langle \ul{G(x+\ii y)} \big\rangle\big| \prec \frac{1}{N|y|}
\end{equation}
uniformly for $|y|  \in (0,1)  $ and $x$. Hence,
\begin{equation}
\frac{1}{2\pi\eta^k}\int_{\bR^2} \Big|\ii yf^{\prime\prime}_{\eta}(x)\chi(y/\eta)\big\langle \ul{ G(x+\ii y) }\big\rangle\Big| \dd x \, \dd y \prec \frac{1}{2\pi\eta^k}\int_{\bR^2} \Big|\frac{1}{N} f^{\prime\prime}_{\eta}(x)\chi(y/\eta)\Big| \dd x \, \dd y = \frac{O(1)}{N\eta^k}\,.
\end{equation}
(Note that the use of stochastic domination inside the integral requires some justification. In fact, we use that a high-probability bound of the form \eqref{G_unif_bound} holds \emph{simultaneously} for all $x \in \R$ and $\abs{y} \in (0,1)$. We refer to \cite[Remark 2.7 and Lemma 10.2]{11} for further details.)
Similarly, by our choice of $\chi$, we find
\begin{equation*}
\frac{1}{2\pi\eta^k}\int_{\bR^2} \bigg|\frac{\ii}{\eta} f_{\eta}(x) \chi^{\prime}(y/\eta)\big\langle \ul{ G(x+\ii y) }\big\rangle\bigg| \dd x \, \dd y\prec\frac{1}{2\pi\eta^k}\int_{|y| \ge \eta} \bigg|\frac{1}{N\eta^2} f_{\eta}(x) \chi^{\prime}(y/\eta)\bigg|  \dd x \, \dd y\ = \frac{O(1)}{N\eta^k}\,.
\end{equation*}
An analogous estimate yields
$$
\frac{1}{2\pi\eta^k}\int_{\bR^2} \bigg|\frac{y}{\eta}f^{\prime}_{\eta}(x)\chi^{\prime}(y/\eta)\big\langle \ul{ G(x+\ii y) }\big\rangle\bigg| \dd x \, \dd y \prec \frac{1}{N\eta^k}\,.
$$
Altogether we have $|\langle \ul{G^k} \rangle| \prec N^{k\alpha-1}$, which is (\ref{center2}). 

Next, we prove \eqref{410}. For $k = 1$, we use the well-known bound $|m(z)| < 1$, which follows using an elementary estimate from the fact that $m$ is the unique solution of 
\begin{equation} \label{2.6}
m(z)+\frac{1}{m(z)}+z=0
\end{equation}
satisfying $\im m(z) \im z>0$. Thus by (\ref{3.3}) we have 
\begin{equation} \label{eqn:1.16}
|(G)_{ij}|\prec
\begin{cases}
1 & \txt{if } i  = j
\\
N^{(\alpha-1)/2} & \txt{if } i \neq j\,,
\end{cases}
\end{equation}
which is \eqref{410} for $k = 1$. The extension to $k \geq 2$ follows again using Lemma \ref{lem:3.1}, and we omit the details.
\end{proof}

Lemma \ref{prop4.4} is very useful in estimating the expectations involving entries of $G$, in combination with the following elementary result about stochastic domination.

\begin{lemma} \label{prop_prec}
\begin{enumerate}
\item
If $X_1 \prec Y_1$ and $X_2 \prec Y_2$ then $X_1 X_2 \prec Y_1 Y_2$.
\item
Suppose that $X$ is a nonnegative random variable satisfying $X \leq N^C$ and $X \prec \Phi$ for some deterministic $\Phi \geq N^{-C}$. Then $\E X \prec \Phi$.
\end{enumerate}
\end{lemma}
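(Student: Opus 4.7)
The plan is to deduce each item directly from Definition \ref{def:2.3}, using a union bound for (i) and a simple truncation of the expectation for (ii).

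For (i), I would fix arbitrary $\varepsilon > 0$ and $D > 0$, and apply the hypothesis $X_i \prec Y_i$ for $i = 1,2$ with error exponent $\varepsilon/2$ and tail exponent $D$ to obtain
\[
\P\qb{X_i > N^{\varepsilon/2} Y_i} \leq N^{-D}
\]
uniformly in the underlying parameter and for all sufficiently large $N$. The key set-theoretic observation is that on the intersection $\{X_1 \leq N^{\varepsilon/2} Y_1\} \cap \{X_2 \leq N^{\varepsilon/2} Y_2\}$ one has $X_1 X_2 \leq N^\varepsilon Y_1 Y_2$, and hence a union bound gives $\P[X_1 X_2 > N^\varepsilon Y_1 Y_2] \leq 2 N^{-D}$. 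Since $D$ was arbitrary, this yields $X_1 X_2 \prec Y_1 Y_2$ uniformly.

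For (ii), I would split the expectation at the threshold $N^\varepsilon \Phi$:
\[
\E X \;\leq\; N^\varepsilon \Phi + \E\qb{X \, \mathbf{1}\{X > N^\varepsilon \Phi\}} \;\leq\; N^\varepsilon \Phi + N^C \, \P[X > N^\varepsilon \Phi],
\]
where the second inequality uses the deterministic upper bound $X \leq N^C$. The hypothesis $X \prec \Phi$ with the choice $D = 2C$ then gives $\P[X > N^\varepsilon \Phi] \leq N^{-2C}$ for large $N$, so combined with $\Phi \geq N^{-C}$ one obtains $N^C \, \P[X > N^\varepsilon \Phi] \leq N^{-C} \leq \Phi$. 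Thus $\E X \leq (N^\varepsilon + 1) \Phi \leq 2 N^\varepsilon \Phi$, and since $\varepsilon$ was arbitrary this means $\E X \prec \Phi$.

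There is no real obstacle here; the only point requiring a bit of care is tracking the quantifiers in Definition \ref{def:2.3}. Specifically, in (ii) the tail exponent $D$ one invokes in the hypothesis must be chosen depending on the given constant $C$, so that the contribution from the rare event $\{X > N^\varepsilon \Phi\}$ is swallowed by the main term via the lower bound $\Phi \geq N^{-C}$. Uniformity in the parameter $u$ from Definition \ref{def:2.3} is preserved automatically in both parts, as each hypothesis is itself uniform.
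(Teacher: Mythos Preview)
Your proof is correct. The paper does not supply its own proof of this lemma, describing it only as an ``elementary result about stochastic domination''; your argument is exactly the standard one and fills in precisely what the paper omits.
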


\subsection{Proof of Lemma \ref{mainlem} (i)} \label{sec:3.2}
Abbreviating $\zeta_i \deq \bE |\sqrt{N}H_{ii}|^2$, we find from Definition {\ref{def:dWigner}} (iii)' that $\zeta_i = O(1)$. We write $z \deq  E+\mathrm{i}\eta$ and often omit the argument $z$ from our notation. Note that $G = (H-z)^{-1}$ and $G^{*} = (H-\bar{z})^{-1}$. In particular, Theorem \ref{refthm1} also holds for $G^{*}$ with obvious modifications accounting for the different sign of $\eta$. For $m, n \ge 1$, we need to compute
	\begin{equation} \label{12313132344324}
	\bE \langle \underline{G^{*}} \rangle^n \langle \underline{G} \rangle^m=\bE \langle \langle \underline{G} \rangle^{m-1} \langle \underline{G^{*}} \rangle^{n} \rangle \underline{G}\,.
	\end{equation}
	By the resolvent identity we have 
	\begin{equation*} 
	G=\frac{1}{z}GH-\frac{1}{z}I\,,
	\end{equation*}
so that
	\begin{equation} \label{3.11}
	\bE \langle \underline{G^{*}} \rangle^n \langle \underline{G} \rangle^m = \frac{1}{z} \bE \langle \langle \underline{G^{*}} \rangle^n \langle \underline{G} \rangle^{m-1} \rangle \underline{GH} = \frac{1}{zN}\sum\limits_{i,j} \bE \langle \langle \underline{G^{*}} \rangle^n \langle \underline{G} \rangle^{m-1} \rangle G_{ij}H_{ji}\,.
	\end{equation}
	Since $H$ is symmetric, for any differentiable $f=f(H)$ we set
\begin{equation} \label{diff}
\frac{\partial }{\partial H_{ij}}f(H)=\frac{\partial }{\partial H_{ji}}f(H)\deq \frac{\mathrm{d}}{\mathrm{d}t}\Big{|}_{t=0} f\pb{H+t\,\Delta^{(ij)}}\,,
\end{equation}
where $\Delta^{(ij)}$ denotes the matrix whose entries are zero everywhere except at the sites $(i,j)$ and $(j,i)$ where they are one: $\Delta^{(ij)}_{kl} =(\delta_{ik}\delta_{jl}+\delta_{jk}\delta_{il})(1+\delta_{ij})^{-1}$. 
 We then compute the last averaging in (\ref{3.11}) using the formula (\ref{3.2}) with $f=f_{ij}(H)\deq \langle \langle \underline{G^{*}} \rangle^n \langle \underline{G} \rangle^{m-1} \rangle G_{ij}$, $h=H_{ji}$, and obtain
	\begin{equation} \label{3.12} 
	\begin{aligned} 
	z\bE \langle \underline{G^{*}} \rangle^n \langle \underline{G} \rangle^m &= \frac{1}{N^2} \sum\limits_{i,j} \bE\frac{\partial( \langle \langle \underline{G^{*}} \rangle^n \langle \underline{G} \rangle^{m-1} \rangle G_{ij})}{\partial H_{ji}} (1+\delta_{ji}(\zeta_i-1)) + L \\
	&=\frac{1}{N^2} \sum\limits_{i,j} \bE \langle \langle \underline{G^{*}} \rangle^n \langle \underline{G} \rangle^{m-1} \rangle \frac{\partial  G_{ij}}{\partial H_{ji}} (1+\delta_{ji})\\ 
	&\ \ +\frac{1}{N^2} \sum\limits_{i,j} \bE\frac{\partial (\langle{\langle \underline{G^{*}} \rangle^n \langle \underline{G} \rangle^{m-1}}\rangle)}{\partial H_{ji}}G_{ij}(1+\delta_{ji})+K + L \\
	&\ \eqd  (a)+(b)+K+ L\,,
	\end{aligned} 
	\end{equation}
	where 
	\begin{equation} \label{3.13}
	K=N^{-2} \sum\limits_{i} \bE\frac{\partial (\langle \langle \underline{G^{*}} \rangle^n \langle \underline{G} \rangle^{m-1} \rangle G_{ii})}{\partial H_{ii}}(\zeta_i-2)
	\end{equation}
	and
	\begin{equation} \label{3.14} 
	L=N^{-1} \cdot \sum\limits_{i,j}\left[\sum\limits_{k=2}^l\frac{1}{k!}\mathcal{C}_{k+1}(H_{ji})\bE\frac{\partial^k( {\langle \langle \underline{G^{*}} \rangle^n \langle \underline{G} \rangle^{m-1} \rangle G_{ij}})}{\partial {H_{ji}}^k}+R_{l+1}^{(ji)}\right].  
	\end{equation}
	Here $l$ is a fixed positive integer to be chosen later, and $R_{l+1}^{(ji)}$ is a remainder term defined analogously to $R_{l+1}$ in (\ref{3.2}). More precisely, we have the bound
	\begin{equation} \label{tau}
	\begin{aligned}	
	R_{l+1}^{(ji)}&=O(1) \cdot \bE \big|{H_{ji}}^{l+2}\mathbf{1}_{\{|H_{ji}|>N^{\tau-1/2}\}}\big|\cdot  \big\| \partial_{ji}^{l + 1}f_{ij}\big(H\big)\big\|_{\infty}\\&\ \ +O(1) \cdot \bE \big|{H_{ji}}^{l+2}\big| \cdot   \bE \sup\limits_{|x| \le N^{\tau-1/2}}\big| \partial_{ji}^{l + 1}f_{ij}\big({H}^{(ij)}+x\Delta^{(ij)}\big)\big|\,,
	\end{aligned}
	\end{equation}
	where we define ${H}^{(ij)}\deq H- H_{ij}\Delta^{(ij)}$, so that the matrix ${H}^{(ij)}$ has zero entries at the positions $(i,j)$ and $(j,i)$, and abbreviate $\partial_{ij} \deq \frac{\partial}{\partial H_{ij}}$.
	Note that for $G=(H-z)^{-1}$ we have
	\begin{equation} \label{3.15}
	\frac{\partial G_{ij}}{\partial H_{kl}}=-(G_{ik}G_{lj}+G_{il}G_{kj})(1+\delta_{kl})^{-1}\,,
	\end{equation}
	which gives
	\begin{equation*} 
	\begin{aligned}
	(a)&= N^{-2} \sum_{i,j} \bE \langle \langle \underline{G^{*}} \rangle^n \langle \underline{G} \rangle^{m-1} \rangle (-G_{ij}G_{ij}-G_{ii}G_{jj}) \\
	&= -N^{-1}\bE\langle \underline{G^{*}} \rangle^n \langle \underline{G} \rangle^{m-1} \langle\underline{G^2} \rangle -\bE \langle \underline{G^{*}} \rangle^n \langle \underline{G} \rangle^{m-1} \langle\underline{G}^2 \rangle \\[0.5em]
	&= -N^{-1}\bE\langle \underline{G^{*}} \rangle^m \langle \underline{G} \rangle^{m-1} \langle\underline{G^2} \rangle - \bE \langle \underline{G^{*}} \rangle^n \langle \underline{G} \rangle^m \langle \underline{G} \rangle \\[0.5em]
	&\ \ \ -2\bE \langle \underline{G^{*}} \rangle^n \langle \underline{G} \rangle^m \bE \underline{G} + \bE \langle \underline{G^{*}} \rangle^m \langle \underline{G} \rangle^{m-1}\bE \langle \underline{G} \rangle^2\,.
	\end{aligned} 
	\end{equation*} 
	Similarly, a straightforward calculation gives
	\begin{equation*} 
	(b)=-\frac{2}{N^2}\left[n\bE\langle\underline{G^{*}}\rangle^{n-1} \langle\underline{G}\rangle^{m-1} \underline{GG^{*2}}+ (m-1)\bE\langle \underline G^{*} \rangle^n \langle \underline{G} \rangle^{m-2} \underline{G^3} \right]\,.
	\end{equation*}
	Altogether we obtain
	\begin{equation} \label{3.18} 
	\begin{aligned}
	\bE \langle \underline{G^{*}} \rangle^n \langle \underline{G} \rangle^m&= \frac{1}{T} \bE \langle \underline{G^{*}} \rangle^n \langle \underline{G} \rangle^{m+1}-\frac{1}{T}\bE \langle \underline{G^{*}} \rangle^n \langle \underline{G} \rangle^{m-1}\bE \langle \underline{G} \rangle^2\\
	&\ \ +\frac{1}{TN}\bE \langle \underline{G^{*}} \rangle^n \langle \underline{G} \rangle^{m-1}\langle \underline{G^2} \rangle+\frac{2m-2}{N^2T}\bE \langle \underline{G^{*}} \rangle^n \langle \underline{G} \rangle^{m-2}\underline{G^3}\\
	&\ \ -\frac{K}{T}-\frac{L}{T}+\frac{2n}{N^2T}\bE \langle \underline{G^{*}} \rangle^{n-1} \langle \underline{G} \rangle^{m-1}\underline{GG^{*2}}\,,
	\end{aligned}
	\end{equation}
	where $T\deq -z-2\bE \underline{G}$. From  (\ref{3.4}), (\ref{2.6}), and Lemma \ref{prop_prec} it is easy to see that 
	\begin{equation} \label{T}
	\Big|\frac{1}{T}\Big|=O(1)\,,
	\end{equation}
	and the implicit constant depends only on the distance to the spectral edge
\begin{equation} \label{def_kappa}
\kappa \deq 2 - \abs{E}\,.
\end{equation}
In (\ref{3.18}), the last term is the leading term. The calculation of (\ref{3.18}) consists of computing the leading term and estimating the subleading terms. We aim to show that the subleading terms are of order $N^{(m+n)(\alpha-1)-c_0}$. 
	
	We begin with $L$. For $k \geq 2$ define \begin{equation} \label{eqn: 2.27}
	J_k \deq    N^{-(k+3)/2} \sum\limits_{i,j}  \left| \bE\frac{\partial^k ({\langle \langle \underline{G^{*}} \rangle^n \langle \underline{G} \rangle^{m-1} \rangle G_{ij}})}{\partial {H_{ji}}^k} \right| \,.
	\end{equation} 
	\begin{lemma} \label{termL}
		Let $R_{l+1}^{(ji)}$ be as in (\ref{3.14}).
		\begin{enumerate}
			\item For any fixed $D_0>0$ there exists some $l_0=l_0(D_0) \ge 2$ such that
			\begin{equation} \label{3.30}
			\sum\limits_{i,j}R_{l_0+1}^{(ji)}=O\big(N^{-D_0}\big)\,.
			\end{equation}
			\item For all fixed $k \ge 2$ we have
			\begin{equation} \label{J_k}
			J_{k} =O\big(N^{(m+n)(\alpha-1)-c_0}\big)\,,
			\end{equation}
			where $c_0$ is defined in (\ref{c_0}).
		\end{enumerate}
	\end{lemma}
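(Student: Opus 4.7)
For (i), the plan is to use the explicit formula \eqref{tau} together with the deterministic operator-norm bound $\norm{G(E+\mathrm{i}\eta)} \le \eta^{-1} = N^\alpha$, which entrywise gives $|G_{ab}| \le N^\alpha$. Since $f_{ij}(H)$ is a polynomial of fixed degree (depending on $m, n$) in entries of $G$ and $G^*$, identity \eqref{3.15} implies that $\partial_{ji}^{l+1} f_{ij}$ is likewise a polynomial in such entries of degree $c(l, m, n)$. Bounding each entry by $N^\alpha$ gives $\norm{\partial_{ji}^{l+1} f_{ij}}_\infty \le N^{C\alpha}$ deterministically, and the same bound holds with $H$ replaced by $H^{(ij)} + x \Delta^{(ij)}$ since $\norm{G(H^{(ij)} + x\Delta^{(ij)})} \le \eta^{-1}$ as well. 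By Definition \ref{def:dWigner}(iii)' and Markov's inequality, for any $\tau \in (0, 1/2)$ and any $D > 0$ we have $\bE |H_{ji}|^{l+2}\mathbf{1}_{\{|H_{ji}|>N^{\tau-1/2}\}} = O(N^{-D})$, while $\bE|H_{ji}|^{l+2} = O(N^{-(l+2)/2})$. Substituting into \eqref{tau}, summing over the $N^2$ pairs $(i,j)$, and using the $N^{-1}$ prefactor in \eqref{3.14} gives a total of order $N^{C\alpha + 1 - (l+2)/2}$, which becomes $O(N^{-D_0})$ once $l_0 = l_0(D_0)$ is chosen large enough.

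For (ii), the plan is to expand $\partial_{ji}^k(\langle F \rangle G_{ij})$ by Leibniz's rule with $F \deq \langle \ul{G^*} \rangle^n \langle \ul G \rangle^{m-1}$ and to estimate each resulting term using Lemma \ref{prop4.4}. Each derivative $\partial_{ji}$ applied to a centered trace $\langle \ul{G^q} \rangle$ or $\langle \ul{G^{*q}} \rangle$ produces a prefactor $\frac{1}{N}$ and an entry $(G^{q+1})_{ij}$ or $(G^{*(q+1)})_{ij}$; further derivatives acting on such entries $(G^p)_{ab}$ with $a, b \in \{i, j\}$ produce, via \eqref{3.15}, more elaborate products of entries of powers of $G$ or $G^*$ with indices in $\{i, j\}$, and no additional $1/N$ factor. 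Hence the Leibniz expansion is a finite sum (whose number of terms depends only on $k, m, n$) in which each summand has the schematic form: (a) a product of surviving centered trace factors $\langle \ul{G^q} \rangle$ or $\langle \ul{G^{*q}} \rangle$, each dominated by $N^{q\alpha -1}$ via Lemma \ref{prop4.4}(i); (b) a power of $1/N$, one factor per derivative acting on a centered trace; and (c) a product of entries $(G^p)_{ab}$ or $(G^{*p})_{ab}$ with $a, b \in \{i, j\}$, dominated via Lemma \ref{prop4.4}(ii). Using Lemma \ref{prop_prec} to convert $\prec$ bounds into bounds on expectations, this reduces the estimate of $J_k$ to a comparison of exponents in $N$ against the target $N^{(m+n)(\alpha-1) - c_0}$, incorporating the prefactor $N^{-(k+3)/2}$ and the $N^2$ from the $(i,j)$-summation.

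The main obstacle is verifying that no sub-case of this bookkeeping violates the $c_0$ budget. The tightest configuration occurs at $k = 2$ with $k_1 = 0$ (no derivatives on $F$), where no $1/N$ gain is available from trace derivatives and all $G$-entries come from $\partial_{ji}^2 G_{ij}$. Here we split $\sum_{i,j}$ into diagonal and off-diagonal contributions: for $i \ne j$, a direct differentiation shows that every term of $\partial_{ji}^2 G_{ij}$ contains at least one off-diagonal factor $G_{ij}$, yielding the extra smallness $N^{(\alpha-1)/2}$ from Lemma \ref{prop4.4}(ii); for $i = j$ the summation contributes only $N$ terms instead of $N^2$, saving one factor of $N$. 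In each sub-case the loss relative to the target is at most $N^{-\alpha/2}$, safely absorbed by $c_0 = \frac{1}{3}\min(\alpha, 1-\alpha) < \alpha/2$. The remaining configurations ($k \ge 3$ or $k_1 \ge 1$) are strictly easier: either the prefactor $N^{-(k+3)/2}$ is sharper, or each derivative hitting a centered trace yields an explicit $1/N$ gain. A case-by-case exponent comparison, uniform in $k$, $k_1$, and $(i,j)$-type, then completes the proof.
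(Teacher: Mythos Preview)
Your treatment of part (ii) is essentially the paper's argument: Leibniz expansion, Lemma \ref{prop4.4} on each factor, and the diagonal/off-diagonal split for the tightest term $\partial_{ji}^2 G_{ij}$. (The claim that the cases $k_1\ge 1$ are ``strictly easier'' is a slight overstatement---in the paper the worst terms of $J_{2,2}$ and $J_{2,3}$ are just as tight as $J_{2,1}$, all giving an extra $N^{-\alpha/2+\varepsilon}$---but the conclusion is unaffected.)

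Part (i), however, has a real gap. You bound every resolvent entry by the trivial $|G_{ab}|\le \|G\|\le N^\alpha$, which makes your constant $C=C(l,m,n)$ grow linearly in $l$: each application of $\partial_{ji}$ to a resolvent entry replaces one entry by a product of two, so after $l+1$ derivatives the polynomial has degree of order $m+n+l$ in resolvent entries and your bound reads $\|\partial_{ji}^{l+1}f_{ij}\|_\infty \le N^{(m+n+l+1)\alpha}$. The second term of \eqref{tau} then contributes, after summing over $i,j$, a quantity of order $N^{2-(l+2)/2+(m+n+l+1)\alpha}$, whose exponent behaves like $(l+1)(\alpha-\tfrac12)+\text{const}$. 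For $\alpha\ge \tfrac12$ this does \emph{not} tend to $-\infty$ as $l\to\infty$, so no choice of $l_0$ makes it $O(N^{-D_0})$.

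The paper repairs this by not using the crude operator-norm bound on the entry factors. It first transfers the local-law bounds $|G_{aa}|\prec 1$, $|G_{ab}|\prec N^{(\alpha-1)/2}$ to the perturbed resolvents $(H^{(ij)}+x\Delta^{(ij)}-z)^{-1}$ uniformly in $|x|\le N^{\tau-1/2}$, via a two-step resolvent expansion \eqref{eqn: A.11}--\eqref{eqn: A.12} with a carefully chosen $\tau=\min\{\alpha/2,(1-\alpha)/4\}$. Then every $G_{ab}$ factor in $\partial_{ji}^{l+1}f_{ij}$ is $\prec 1$, while the $m+n-1$ ``trace-type'' factors $\langle\ul G\rangle,\langle\ul{G^*}\rangle,\tfrac1N(G^2)_{ab},\tfrac1N(G^{*2})_{ab}$ are each trivially $O(N^\alpha)$. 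This gives $\sup_{|x|\le N^{\tau-1/2}}|\partial_{ji}^{l+1}f_{ij}|\prec N^{(m+n-1)\alpha}$ with an exponent \emph{independent of $l$}, after which choosing $l_0$ large works for every $\alpha\in(0,1)$. (A minor side point: the target bound \eqref{3.30} is on $\sum_{i,j}R^{(ji)}_{l_0+1}$ itself, without the $N^{-1}$ prefactor you borrowed from \eqref{3.14}.)
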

	Before proving Lemma \ref{termL}, we show how to use it to estimate $L$. By setting $D_0=(m+n)(1-\alpha)$ in (\ref{3.30}), we obtain
	\begin{equation} \label{3.32}
	\sum\limits_{i,j}R^{(ji)}_{l_0+1}=O\big(N^{(m+n)(\alpha-1)}\big)
	\end{equation}
	for some $l_0 \ge 2$. From Definition \ref{def:dWigner} (iii)' we get $$\max\limits_{i,j}\big|\mathcal{C}_{k}(H_{ji})\big|= O(N^{-k/2})$$
	for all $k \ge 2$. Thus (\ref{J_k}) and (\ref{3.32}) together imply
	\begin{equation} \label{3.44}
	L=O\big(N^{(m+n)(\alpha-1)-c_0}\big)\,,
	\end{equation}
as desired.

	\begin{proof}[Proof of Lemma \ref{termL} (i)]
		
	Let $D_0>0$ be given. Fix $i,j$, and choose $\tau=\min{\{\alpha/2, (1-\alpha)/4\}}$ in \eqref{tau}. Define $W \deq  H_{ij}\Delta^{(ij)}$ and $\hat{H}\deq H^{(ij)}= H-W$.
	Let $\hat{G} \deq  (\hat{H}-E-\mathrm{i}\eta)^{-1}$.
	We have the resolvent expansions
	\begin{equation} \label{eqn: A.11}
	\hat{G}=G+(GW)G+(GW)^2\hat{G}
	\end{equation}
	and
	\begin{equation} \label{eqn: A.12}
	G=\hat{G}-(\hat{G}W)\hat{G}+(\hat{G}W)^2G\,.
	\end{equation}
	Note that only two entries of $W$ are nonzero, and they are stochastically dominated by $N^{-1/2}$. Then the trivial bound $\max\limits_{a,b}|\hat{G}_{ab}| \le N^{\alpha}$ together with (\ref{eqn:1.16}) and (\ref{eqn: A.11}) show that $\max\limits_{a \ne b}|\hat{G}_{ab}| \prec N^{-(\alpha-1)/2}$, and $\max\limits_{a}|\hat{G}_{aa}| \prec 1$. Combining with (\ref{eqn: A.12}), the trivial bound $\max\limits_{a,b}|G_{ab}| \le N^{\alpha}$, and the fact $\hat{G}$ is independent of $W$, we have
	\begin{equation} \label{eqn: A.13}
	\max\limits_{a \ne b}\sup\limits_{|H_{ji}| \le N^{\tau-1/2}}|G_{ab}| \prec N^{-(\alpha-1)/2}\,,
	\end{equation}
	and
	\begin{equation} \label{eqn: A.14}
	\max\limits_a \sup\limits_{|H_{ji}| \le N^{\tau-1/2}}|G_{aa}| \prec 1\,.
	\end{equation}

Now let us estimate the last term in \eqref{tau}. We have the derivatives 
    $$	\partial_{ji} G_{ab}=-(G_{aj}G_{ib}+G_{ai}G_{jb})(1+\delta_{ji})^{-1}\,, \hspace{1cm} \partial_{ji}\langle\underline{G}\rangle=\frac{2}{N}(G^2)_{ji}(1+\delta_{ji})^{-1}\,,$$
    and
    $$\partial_{ji} (G^2)_{ab}=\big((G^2)_{aj}G_{ib}+(G^2)_{ai}G_{jb}+(G^2)_{bj}G_{ia}+(G^2)_{bi}G_{ja}\big)(1+\delta_{ji})^{-1}\,,$$ 
    where $\partial_{ij} \deq \frac{\partial}{\partial H_{ij}}$. Hence for any fixed $l \in \bN$, $\partial_{ji}^{l+1}f_{ij}$ is a polynomial in the variables $\langle \underline{G} \rangle$, $\langle \underline{G^{*}} \rangle$, $\frac{1}{N}(G^2)_{ab}$, $\frac{1}{N}(G^{*2})_{ab}$, $G_{ab}$, and $G^{*}_{ab}$, with $a,b \in \{i,j\}$. Note that in each term of the polynomial, the sum of the degrees of $\langle \underline{G} \rangle$, $\langle \underline{G^{*}} \rangle$, $\frac{1}{N}(G^2)_{ab}$, and $\frac{1}{N}(G^{*2})_{ab}$ is $m+n-1$, so that the product of the factors other than $G_{ab}$ and $G^{*}_{ab}$ is trivially bounded by $O(N^{(m+n-1)\alpha})$ for all $H$. Together with (\ref{eqn: A.13}) and (\ref{eqn: A.14}) we know, for any fixed $l \in \bN$,
    \begin{equation*} 
    \sup\limits_{|x|\le N^{\tau-1/2}} \left| \partial_{ji}^{l+1}f_{ij}\big(\hat{H}+x\Delta^{(ij)}\big)\right| \prec N^{(m+n-1)\alpha}\,.
    \end{equation*}
Note that $\bE |H_{ji}|^{l+2} =O(N^{-(l+2)/2})$,	and we can find $l_0=l_0(D_0,m,n) \ge 2$ such that 
	\begin{equation} \label{3.28}
	\bE |H_{ji}|^{l_0+2}\cdot\bE \sup\limits_{x\le N^{\tau-1/2}}\big| \partial_{ji}^{l_0 + 1}f_{ij}\big(\hat{H}+x\Delta^{(ij)}\big)\big|=O(N^{-(D_0+2)})\,.
	\end{equation}
	
	Finally, we estimate the first term of \eqref{tau}. Note that by the trivial bound $|G_{ij}|\le N^{\alpha}$, we have $\norm{\partial_{ji}^{l_0+1} f(H)}_{\infty}=O(N^{C(m,n,l_0)})$. From Definition \ref{def:dWigner} (iii)' we find $\max\limits_{i,j}|H_{ij}| \prec \frac{1}{\sqrt{N}}$\,,
	then by H\"{o}lder's inequality we have
	\begin{equation} \label{3.29}
	\bE \big|{H_{ji}}^{l+2}\mathbf{1}_{\{|H_{ji}|>N^{\tau-1/2}\}}\big|\cdot \big\| \partial_{ji}^{l+1}f_{ij}(H)\big\|_{\infty} =O(N^{-(D_0+2)})\,.
	\end{equation}
Combining \eqref{3.28} and \eqref{3.29}, we obtain from (\ref{tau}) that $R_{l_0+1}^{(ji)}=O(N^{-(D_0+2)})$, from which \eqref{3.30} follows.
\end{proof}

	\begin{proof}[Proof of Lemma \ref{termL} (ii)] We begin with the case $k=2$, which gives rise to terms of three types depending on how many derivatives act on $G_{ij}$. We deal with each type separately.
	
	Step 1. The first type is
	\begin{equation*} 
	J_{2,1} \deq  N^{-5/2}  \sum\limits_{i,j} \left| \bE \langle \langle \underline{G^{*}} \rangle^n \langle \underline{G} \rangle^{m-1} \rangle \frac{\partial^2  G_{ij}}{\partial {H_{ji}}^2} \right|\,.
	\end{equation*} 
	Note that
	\begin{equation*} 
	\frac{\partial^2  G_{ij}}{\partial {H_{ji}}^2}=a_1\cdot G_{ii}G_{jj}G_{ij}+a_2 \cdot {G_{ij}}^3\,,
	\end{equation*}
	where $a_1$, $a_2$ are some constants depending on the value of $\delta_{ji}$. Together with \eqref{410} and Lemma \ref{prop_prec}, we find
	\begin{equation*} 
	J_{2,1}\le N^{-5/2}\cdot N^2 \cdot \bE \left| \langle \langle \underline{G^{*}} \rangle^n \langle \underline{G} \rangle^{m-1} \rangle \right| \cdot O(N^{(\alpha-1)/2+\varepsilon}) = O(N^{\alpha/2+\ee-1}) \cdot \bE \left| \langle \underline{G^{*}} \rangle^n \langle \underline{G} \rangle^{m-1} \right|
	\end{equation*}
	for any fixed $\ee>0$. Together with \eqref{center2} and Lemma \ref{prop_prec}, we find
	\begin{equation} \label{eqn: 2.33}
	J_{2,1}= O(N^{\alpha/2+\ee-1+(m+n-1)(\alpha-1)}) = O(N^{(m+n)(\alpha-1)-c_0})\,,  
	\end{equation}
	where in the last inequality we chose $\epsilon$ small enough depending on $\alpha$.
	
	Step 2. The second type is
	\begin{equation} \label{eqn: 2.41}
	J_{2,2} \deq   N^{-\frac{5}{2}}   \sum\limits_{i,j} \left| \bE \frac{\partial^2 (\langle \langle \underline{G^{*}} \rangle^n \langle \underline{G} \rangle^{m-1} \rangle)}{\partial {H_{ji}}^2} G_{ij} \right|\,.
	\end{equation}
	Since
	\begin{equation*} 
		\frac{\partial \langle \underline{G^{*}}\rangle}{\partial H_{ji}}  =\frac{a_3}{N} \cdot (G^{*2})_{ij} \mbox{ \   and \ \   } \frac{\partial^2 \langle \underline{G^{*}}\rangle}{\partial {H_{ji}}^2}  =\frac{a_4}{N} \cdot (G^{*2})_{ii}G^{*}_{jj}+\frac{a_5}{N} \cdot (G^{*2})_{jj}G^{*}_{ii}+\frac{a_6}{N} \cdot (G^{*2})_{ij}G^{*}_{ij}
	\end{equation*}
	for some constants $a_3$, $a_4$ and $a_5$, we see that the most dangerous term of $J_{2,2}$ is of the form
	\begin{equation} \label{eqn: 2.42}
	P_{2,2} \deq  N^{-7/2}  \cdot \sum\limits_{i,j} \left| \bE \langle \underline{G^{*}} \rangle^{n-1}  \langle \underline{G} \rangle^{m-1} (G^{*2})_{ii}G^{*}_{jj}G_{ij} \right|\,.
	\end{equation}
	By \eqref{center2}, \eqref{410}, and Lemma \ref{prop_prec} we have
	\begin{equation*} 
	P_{2,2} =O(N^{-7/2+2+(m+n-2)(\alpha-1)+\alpha+(\alpha-1)/2+\varepsilon}) =O(N^{(m+n)(\alpha-1)-\alpha/2+\ee})
	\end{equation*}
	for any fixed $\ee>0$. The other terms of $J_{2,2}$ are estimated similarly. By choosing $\varepsilon$ small enough, we obtain
	\begin{equation} \label{eqn: 2.46}
	J_{2,2} =O\big(N^{(m+n)(\alpha-1)-c_0}\big)\,.
	\end{equation}

	Step 3. The third type is
	\begin{equation} \label{eqn: 2.34}
		J_{2,3}  \deq  N^{-5/2}   \sum\limits_{i,j} \left| \bE \frac{\partial( \langle \langle \underline{G^{*}} \rangle^n \langle \underline{G} \rangle^{m-1} \rangle)}{\partial H_{ji}} \frac{\partial  G_{ij}}{\partial H_{ji}} \right|\,.
	\end{equation}
	The most dangerous term in $J_{2,3}$ is of the form
	\begin{equation} \label{eqn: 3.38}
	P_{2,3}\deq  N^{-7/2}  \cdot \sum\limits_{i,j} \left| \bE \langle \underline{G^{*}} \rangle^{n-1}  \langle \underline{G} \rangle^{m-1} (G^{*2})_{ij}G_{ii}G_{jj} \right|\,.
	\end{equation}
Again by \eqref{center2}, \eqref{410}, and Lemma \ref{prop_prec} we have
$$
P_{2,3}=O(N^{-7/2+2+(m+n-2)(\alpha-1)+3\alpha/2-1/2+\ee})=O(N^{(m+n)(\alpha-1)-c_0})
$$
for any fixed $\ee>0$. The other terms of $J_{2,3}$ are estimated similarly.
Thus we get
\begin{equation*} \label{3.41}
J_{2,3} =O\big(N^{(m+n)(\alpha-1)-c_0}\big)\,.
\end{equation*}

Step 4. Putting the estimates of the three types in Steps 1-3 together, we find
\begin{equation*} \label{3.42}
J_{2} =O\big(N^{(m+n)(\alpha-1)-c_0}\big)\,,
\end{equation*}
which concludes the proof of \eqref{J_k} for $k = 2$.

For $k \ge 3$, the estimates are easier than those in $k=2$ because of the small prefactor $N^{-(k+3)/2}$ in the definition of $J_k$. Analogously to the case $k=2$, we obtain for any fixed $k \ge 3$ and $\epsilon > 0$,
\begin{equation*} \label{3.43}
J_k=O(N^{(m+n)(\alpha-1)+1-k/2+\ee})\,,
\end{equation*}
for any $\ee>0$, from which \eqref{J_k} follows. We omit further details.
\end{proof}

Now we look at the term $K$ defined in \eqref{3.13}, whose estimate is contained in the next lemma.

\begin{lemma} \label{lem4.7}
	We have
	\begin{equation}
	K=O(N^{(m+n)(\alpha-1)-\alpha/2})\,.
	\end{equation}
\end{lemma}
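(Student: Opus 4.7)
The plan is to avoid introducing another self-consistent equation for $K$ and instead to expand the derivative in \eqref{3.13} via Leibniz and bound each resulting summand directly using the resolvent estimates of Lemma \ref{prop4.4}. Writing $X \deq \langle \underline{G^{*}} \rangle^n \langle \underline{G} \rangle^{m-1}$, I have $\partial_{H_{ii}} \langle X \rangle = \partial_{H_{ii}} X$ since $\bE X$ is deterministic, and the resolvent derivative \eqref{3.15} (with $k=l=i$) yields $\partial_{H_{ii}} G_{ii} = -G_{ii}^2$, $\partial_{H_{ii}} \underline{G} = -N^{-1}(G^2)_{ii}$, and $\partial_{H_{ii}} \underline{G^{*}} = -N^{-1}(G^{*2})_{ii}$. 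Applying Leibniz to $\partial_{H_{ii}}(\langle X \rangle G_{ii})$ thus decomposes $K$ as $K_1 + K_2 + K_3$, where
\begin{align*}
K_1 &\deq -\frac{1}{N^2}\sum_i \bE \qb{\langle X \rangle \, G_{ii}^2 (\zeta_i - 2)}\,, \\
K_2 &\deq -\frac{n}{N^3}\sum_i \bE \qb{\langle \underline{G^{*}} \rangle^{n-1} \langle \underline{G} \rangle^{m-1} (G^{*2})_{ii} G_{ii} (\zeta_i - 2)}\,, \\
K_3 &\deq -\frac{m-1}{N^3}\sum_i \bE \qb{\langle \underline{G^{*}} \rangle^{n} \langle \underline{G} \rangle^{m-2} (G^2)_{ii} G_{ii} (\zeta_i - 2)}\,.
\end{align*}

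The main inputs are $|G_{ii}| \prec 1$ and $|(G^2)_{ii}|, |(G^{*2})_{ii}| \prec N^{\alpha}$ from \eqref{410}, together with $|\langle \underline{G} \rangle|, |\langle \underline{G^{*}} \rangle| \prec N^{\alpha - 1}$ from \eqref{center2}. Combining these via Lemma \ref{prop_prec}(i) gives $|X| \prec N^{(m+n-1)(\alpha - 1)}$; the deterministic bound $|X| \leq N^C$ (which follows from $\norm{G} \leq \eta^{-1} = N^{\alpha}$) together with Lemma \ref{prop_prec}(ii) then upgrades this to $|\langle X \rangle| \prec N^{(m+n-1)(\alpha-1)}$. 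Since $|\zeta_i - 2| = O(1)$, the sum of $N$ terms in $K_1$ yields $|K_1| = O(N^{(m+n)(\alpha - 1) - \alpha + \epsilon})$ for any fixed $\epsilon > 0$. For $K_2$ and $K_3$ the $N^{\alpha}$ cost of the factor $(G^{\sharp 2})_{ii}$ is compensated by the gain $(\alpha - 1)$ lost in reducing one centred trace from the product, and the bookkeeping $N^{-3}\cdot N \cdot N^{\alpha}\cdot N^{(m+n-2)(\alpha-1)}$ gives the same bound $O(N^{(m+n)(\alpha - 1) - \alpha + \epsilon})$. Choosing $\epsilon < \alpha/2$ absorbs the loss and proves the lemma; in fact the proof yields the stronger exponent $(m+n)(\alpha-1) - \alpha$.

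I expect no serious obstacle. The estimate of $K$ is in fact easier than the estimate of $L$ in Lemma \ref{termL}, because the single diagonal sum over $i$ is paired with the prefactor $N^{-2}$, leaving only one unit of $N$ to be absorbed by the summand bounds, whereas $L$ required absorbing a full double sum. The only delicate step is converting $\prec$-bounds into expectation bounds through Lemma \ref{prop_prec}(ii); this is routine here thanks to the polynomial a priori bound $\norm{G} \leq N^{\alpha}$. Degenerate cases ($n=0$ or $m=1$) are harmless since the corresponding prefactors $n$ and $m-1$ automatically kill the relevant contributions.
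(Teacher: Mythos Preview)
Your proposal is correct and follows essentially the same approach as the paper's proof: both expand the derivative in \eqref{3.13} by Leibniz into the term where $\partial_{H_{ii}}$ hits $G_{ii}$ and the terms where it hits a factor of $\langle\underline{G}\rangle$ or $\langle\underline{G^*}\rangle$, then bound each piece using Lemma~\ref{prop4.4} and Lemma~\ref{prop_prec} to obtain $O(N^{(m+n)(\alpha-1)-\alpha+\epsilon})$ and absorb the $\epsilon$ into the claimed exponent $-\alpha/2$. The paper phrases this by bounding the quantity $B$ with absolute values inside the sum and then noting $K=O(B)$ via $\max_i|\zeta_i-2|=O(1)$, but the underlying computation is identical to yours.
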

\begin{proof}
Let us first consider
\begin{equation}
B \deq N^{-2} \sum\limits_{i} \bE\bigg|\frac{\partial (\langle \langle \underline{G^{*}} \rangle^n \langle \underline{G} \rangle^{m-1} \rangle G_{ii})}{\partial H_{ii}}\bigg|\,.
\end{equation}
The estimate of $B$ is similar to that of $J_2$, namely we will have terms of two types depending on whether the derivative acts on $G_{ii}$ or not. We then estimate the terms by Lemmas \ref{prop4.4} and \ref{prop_prec}, which easily yields
$$
B=O(N^{(m+n)(\alpha-1)-\alpha+\ee})=O(N^{(m+n)(\alpha-1)-\alpha/2})\,.
$$ 
From Definition \ref{def:dWigner} (iii)' we get $\max\limits_{i}|\zeta_i-2|=O(1)$, hence $K=O(B)$. This finishes the proof.
\end{proof}

In order to conclude the proof, we need to use that the expectation of $\ul{G^k}$ is typically much smaller than $\ul{G^k}$ itself. Lemma \ref{prop4.4} implies that $\bE \abs{ \ul{G^k}} \prec N^{(k - 1) \alpha}$, which is not enough to conclude the proof. We need some extra decay from the expectation, which is provided by the following result.

\begin{lemma} \label{lem3.11}
Let $c_0$ be defined as in $(\ref{c_0})$. We have
\begin{equation}
\bE \ul{G^k}=O\big(N^{(k-1)\alpha-c_0}\big)
\end{equation}	
for $k=2,3$.
\end{lemma}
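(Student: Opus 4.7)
My plan is to transfer the control on $\ul{G}-m$ provided by the local semicircle law (Theorem \ref{refthm1}) to analogous bounds on $\ul{G^k}-m^{(k-1)}/(k-1)!$ via Cauchy's integral formula, and then to promote the resulting high-probability estimate to a deterministic bound on the expectation using Lemma \ref{prop_prec} (ii). The starting identity is
\[
\ul{G^k}(z)=\frac{1}{(k-1)!}\,\partial_z^{k-1}\ul{G}(z)\,,
\]
which follows by iterating $\partial_z G=G^2$.

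First I would fix $z=E+\ii\eta$ with $\eta=N^{-\alpha}$ and let $\gamma$ be the circle of radius $\eta/2$ centered at $z$. On $\gamma$ one has $\im\zeta\geq\eta/2$, so Theorem \ref{refthm1} gives $|\ul{G}(\zeta)-m(\zeta)|\prec N^{\alpha-1}$, uniformly in $\zeta\in\gamma$ by the same simultaneous-high-probability argument used in the proof of Lemma \ref{prop4.4}. Cauchy's integral formula applied to the holomorphic function $\ul{G}-m$ on $\bH$ then yields
\[
\bigl|\partial_z^{k-1}(\ul{G}-m)(z)\bigr|\le\frac{(k-1)!}{(\eta/2)^{k-1}}\sup_{\zeta\in\gamma}|\ul{G}(\zeta)-m(\zeta)|\prec N^{k\alpha-1}\,,
\]
and dividing by $(k-1)!$ yields the pointwise bound $|\ul{G^k}(z)-m^{(k-1)}(z)/(k-1)!|\prec N^{k\alpha-1}$.

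Next I would take expectations. The nonnegative random variable $X\deq|\ul{G^k}(z)-m^{(k-1)}(z)/(k-1)!|$ satisfies $X\prec N^{k\alpha-1}$ and is deterministically bounded by $\eta^{-k}+|m^{(k-1)}(z)|/(k-1)!\leq N^{k\alpha}+C$, where we used that $m^{(k-1)}(z)=O(1)$ since $E$ is a fixed bulk energy with $\kappa=2-|E|>0$, so $m$ is analytic in a fixed neighbourhood of $z$. Lemma \ref{prop_prec} (ii) therefore upgrades the bound to $\bE X\prec N^{k\alpha-1}$, and consequently
\[
\bigl|\bE\ul{G^k}(z)-m^{(k-1)}(z)/(k-1)!\bigr|\prec N^{k\alpha-1}\,,\qquad\bE\ul{G^k}(z)=O(1)+O(N^{k\alpha-1+\ee})
\]
for any fixed $\ee>0$.

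Finally an exponent chase concludes the proof. Since $c_0\leq\alpha/3$ one has $(k-1)\alpha-c_0>0$ for $k\geq 2$, so $O(1)\leq O(N^{(k-1)\alpha-c_0})$; since $c_0\leq(1-\alpha)/3$ one has $\alpha+c_0\leq(2\alpha+1)/3<1$, so $k\alpha-1<(k-1)\alpha-c_0$ and the error term $O(N^{k\alpha-1+\ee})$ is also absorbed into $O(N^{(k-1)\alpha-c_0})$ for sufficiently small $\ee>0$. I expect the only delicate step to be the uniform high-probability control of $\ul{G}-m$ on $\gamma$ that justifies taking the supremum inside the stochastic domination; this is handled by a standard polynomial-net argument (combined with the deterministic Lipschitz bound $\|\partial_\zeta(\ul{G}-m)\|\leq CN^{2\alpha}$) already invoked in the proof of Lemma \ref{prop4.4}.
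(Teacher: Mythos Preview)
Your proof is correct and takes a genuinely different route from the paper's. The paper proves the lemma by deriving a self-consistent equation for $\bE\ul{G^2}$ (and analogously for $\bE\ul{G^3}$) via the resolvent identity $G=\tfrac{1}{z}GH-\tfrac{1}{z}I$ and the cumulant expansion of Lemma~\ref{lem:3.1}, arriving at
\[
\bE \underline{G^2} = \frac{1}{T} \bE \underline{G} + \frac{2}{T} \bE \langle \underline{G} \rangle \langle \underline{G^2} \rangle + \frac{1}{TN} \bE \underline{G^3} -\frac{K^{(2)}}{T}- \frac{L^{(2)}}{T}\,,
\]
and then estimating each term on the right-hand side separately (using Lemma~\ref{prop4.4} and an analogue of Lemma~\ref{termL}). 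Your argument bypasses all of this: you use the identity $\ul{G^k}=\tfrac{1}{(k-1)!}\partial_z^{k-1}\ul{G}$, apply Cauchy's estimate on a circle of radius $\eta/2$ to transfer the local law bound on $\ul{G}-m$ to its derivatives, and then observe that $m^{(k-1)}(z)=O(1)$ in the bulk.

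What each approach buys: your argument is considerably shorter and more transparent, and it makes clear that the claim is really a direct consequence of the averaged local law \eqref{3.4} plus the regularity of $m$ in the bulk; no new self-consistent equation is needed. The paper's approach, on the other hand, reuses the same cumulant-expansion machinery that drives the rest of Section~\ref{sec3}, and does not rely on the analytic continuation of $m$ across $(-2,2)$ --- this could matter in settings where the limiting Stieltjes transform is less explicit. One small remark on your write-up: your phrase ``$m$ is analytic in a fixed neighbourhood of $z$'' deserves a word of justification, since $z=E+\ii\eta$ approaches the branch cut $[-2,2]$ as $N\to\infty$; the point is that $m|_{\bH}$ extends analytically through the open segment $(-2,2)$ (as is clear from the explicit formula $m(z)=\tfrac{1}{2}(-z+\sqrt{z^2-4})$, whose only singularities are at $\pm 2$), so the derivatives stay bounded by a constant depending only on $\kappa$.
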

\begin{proof}
 The proof is analogous to that of Lemma \ref{termL}.
 Let us first consider $\bE \underline{G^2}$. Again by the resolvent identity and the cumulant expansion, we arrive at
\begin{equation} \label{eqn: 2.59}
\bE \underline{G^2} = \frac{1}{T} \bE \underline{G} + \frac{2}{T} \bE \langle \underline{G} \rangle \langle \underline{G^2} \rangle + \frac{1}{TN} \bE \underline{G^3} -\frac{K^{(2)}}{T}- \frac{L^{(2)}}{T}\,,
\end{equation} 
where 
\begin{equation}
K^{(2)}=N^{-2} \sum\limits_{i} \bE\frac{\partial (G^2)_{ii}}{\partial H_{ii}}(\zeta_i-2)\,,
\end{equation}
and
\begin{equation} \label{eqn: 2.60}
L^{(2)}= \frac{1}{N} \sum\limits_{i,j} \left[ \sum\limits_{k=2}^l \frac{1}{k!} \mathcal{C}_{k+1}(H_{ji}) \bE \frac{\partial^k (G^2)_{ij}}{\partial {H_{ji}}^k} +R_{l+1}^{(2,ji)}  \right]\,,
\end{equation} 
and we recall the definition \eqref{T} of $T$.
Here $R_{l+1}^{(2,ji)}$ is a remainder term defined analogously to $R_{l+1}^{(ji)}$ in (\ref{tau}). We can argue similarly as in the proof of Lemma \ref{termL} (i) and show that $R_{l_0+1}^{(2,ji)}=O(N^{-1})$ for some $l_0 \in \bN$.
Thus we have $|L^{(2)}| \le \sum\limits_{k=2}^{l_0} O(J^{(2)}_k)+O(1)$, where
\begin{equation} \label{2.62}
J_k^{(2)}\deq N^{-(k+3)/2}\sum\limits_{i,j} \left| \bE  \frac{\partial^k (G^2)_{ij}}{\partial {H_{ji}}^k} \right|\,.
\end{equation}
Analogously to the proof of Lemma \ref{termL} (ii), we find
\begin{equation*} \label{eqn: 2.63}
J^{(2)}_2 =O(N^{-5/2} \cdot N^2 \cdot N^{\alpha+\varepsilon})=O(N^{\alpha+\varepsilon-1/2})\,,
\end{equation*}
for any fixed $\varepsilon >0$, and $J^{(2)}_k= O(N^{\alpha-1/2})$ for $k \ge 3$. This shows $|L^{(2)}|=O(N^{\alpha+\varepsilon-1/2})$ for any fixed $\ee>0$. Similar as in Lemma \ref{lem4.7}, one can show $K^{(2)}=O(N^{\alpha/2})$. By Lemma \ref{prop4.4} and \ref{prop_prec}, we have
\begin{equation} \label{eqn: 2.61}
\big|\bE \langle \underline{G} \rangle  \langle \underline{G^2} \rangle\big| =O(N^{(\alpha-1)+(2\alpha-1)+\ee}) =O(N^{3\alpha-2+\ee})\,, \qquad \frac{1}{N} \big|\bE \underline{G^3} \big| = O(N^{-1+2\alpha+\ee})\,,
\end{equation} 
for any fixed $\epsilon > 0$.
Hence by using \eqref{T} and choosing $\epsilon$ small enough, we obtain
\begin{equation*}
\big| \bE \underline{G^2} \big| \le  O(N^{2\alpha-1+\ee})+ O(N^{\alpha/2})+O(N^{\alpha+\varepsilon-1/2}) = O(N^{\alpha-c_0})\,.
\end{equation*}

The proof of the case $k = 3$ is similar, and we omit the details.
\end{proof}

Armed with Lemmas \ref{termL} and \ref{lem3.11}, we may now conclude the proof of Lemma \ref{mainlem} (i). We still have to estimate the subleading terms on the right-hand side of (\ref{3.18}). From \eqref{center2}, Lemma \ref{prop_prec}, and Lemma \ref{lem3.11} we have
\begin{equation} \label{eqn: 2.78}
	 \bE \langle \underline{G^{*}} \rangle^{n}  \langle \underline{G} \rangle^{m-2} \underline{G^3}=\bE \langle \underline{G^{*}} \rangle^{n}  \langle \underline{G} \rangle^{m-2} \big(\bE \underline{G^3}+ \langle \underline{G^3} \rangle\big)   =O\big(N^{(m+n)(\alpha-1)+2-c_0}\big)\,.
\end{equation}
Moreover, \eqref{center2} and Lemma \ref{prop_prec} imply
\begin{equation} \label{eqn: 2.79}
	 \bE \langle \underline{G^{*}} \rangle^{n}  \langle \underline{G} \rangle^{m-1} \langle \underline{G^2} \rangle  =O\big(N^{(m+n)(\alpha-1)+1-c_0}\big)\,,
\end{equation}
\begin{equation} \label{eqn: 2.80}
	\bE \langle  \underline{G^{*}} \rangle^{n}  \langle \underline{G}  \rangle^{m+1}  =O\big(N^{(m+n)(\alpha-1)-c_0}\big)\,,
\end{equation}
as well as
\begin{equation} \label{w}
\bE \langle  \underline{G^{*}} \rangle^{n}  \langle \underline{G}  \rangle^{m-1}\bE \langle \underline{G}  \rangle^{2}  =O\big(N^{(m+n)(\alpha-1)-c_0}\big)\,.
\end{equation}
Applying (\ref{3.44}), Lemma \ref{lem3.11}, and (\ref{eqn: 2.78})--(\ref{w}) to (\ref{3.18}), together with \eqref{T}, we obtain
\begin{equation*} \label{eqn: 2.81}
	\bE \langle \underline{G^{*}} \rangle^n \langle \underline{G} \rangle^m=\frac{2n}{N^2T}\bE \langle \underline{G^{*}} \rangle^{n-1} \langle \underline{G} \rangle^{m-1}\underline{GG^{*2}}+O\big(N^{(m+n)(\alpha-1)-c_0}\big)\,.
\end{equation*}  
By the resolvent identity, 
\begin{equation} \label{eqn: 2.82}
	\underline{GG^{*2}}=\frac{1}{z-\bar{z}}(\underline{GG^{*}}-\underline{G^{*2}})=-\frac{N^{2\alpha}}{4}(\underline{G}-\underline{G^{*}})
	-\frac{N^{\alpha}}{2\mathrm{i}}\underline{G^{*2}}\,.
\end{equation}
Moreover, \eqref{center2} and Lemmas \ref{prop_prec} and \ref{lem3.11} give $\big|\bE \langle \underline{G^{*}} \rangle^{n-1} \langle \underline{G} \rangle^{m-1} \underline{G^{*2}}\big|=O(N^{(m+n-2)(\alpha-1)+\alpha-c_0})$. We therefore conclude that
\begin{equation*} \label{eqn: 2.83}
	\bE \langle \underline{G^{*}} \rangle^n \langle \underline{G} \rangle^m=-\frac{n}{2T}N^{2\alpha-2}\bE \langle \underline{G^{*}} \rangle^{n-1} \langle \underline{G} \rangle^{m-1}(\underline{G}-\underline{G^{*}})+O\big(N^{(m+n)(\alpha-1)-c_0}\big)\,,
\end{equation*}
which yields
\begin{equation*} \label{eqn: 2.84}
	\bE \langle \underline{G^{*}} \rangle^n \langle \underline{G} \rangle^m=-\frac{n}{2T}N^{2\alpha-2}\bE \langle \underline{G^{*}} \rangle^{n-1} \langle \underline{G} \rangle^{m-1}(\bE\underline{G}-\bE\underline{G^{*}})+O\big(N^{(m+n)(\alpha-1)-c_0}\big)
\end{equation*}
by \eqref{center2} and Lemma \ref{prop_prec}. Writing $\xi\deq \im\underline{G}$, we have $\bE\underline{G}-\bE\underline{G^{*}} = 2\mathrm{i}\bE \xi$. Moreover, (\ref{3.4}) and (\ref{2.6}) imply that $T=-z-2\bE \underline{G}=-2\mathrm{i}\bE \xi+O(N^{-c_0})$. Together with \eqref{center2} and Lemma \ref{prop_prec} we have
\begin{equation} \label{eqn: 2.85}
	\bE \langle \underline{G^{*}} \rangle^n \langle \underline{G} \rangle^m=\frac{n}{2}N^{2\alpha-2}\bE \langle \underline{G^{*}} \rangle^{n-1} \langle \underline{G} \rangle^{m-1}+O\big(N^{(m+n)(\alpha-1)-c_0}\big)
\end{equation}
for $m,n \ge 1$. 

The preceding argument can also be used to show 
\begin{equation} \label{eqn: 2.86}
\bE \langle \underline{G} \rangle^m=O\big(N^{m(\alpha-1)-c_0}\big)
\end{equation}
for all $m \ge 2$. In fact, one can start with $$\bE \langle \underline{G} \rangle^m = \frac{1}{zN}\sum\limits_{i,j} \bE \langle \langle \underline{G} \rangle^{m-1} \rangle G_{ij}H_{ji}$$ and apply Lemma \ref{lem:3.1} to get an analogue of \eqref{3.18}, which is
\begin{equation} \label{4.55}
\begin{aligned}
\bE  \langle \underline{G} \rangle^m&=\ \frac{1}{T} \bE  \langle \underline{G} \rangle^{m+1}-\frac{1}{T}\bE  \langle \underline{G} \rangle^{m-1}\bE \langle \underline{G} \rangle^2
+\frac{1}{TN}\bE \langle \underline{G} \rangle^{m-1}\langle \underline{G^2} \rangle\\&\ \ +\frac{2m-2}{N^2T}\bE  \langle \underline{G} \rangle^{m-2}\underline{G^3}
-\frac{K^{(3)}}{T}-\frac{L^{(3)}}{T}\,.
\end{aligned}
\end{equation}
Here $T=-z-2\bE \ul{G}$, and $K^{(3)}$, $L^{(3)}$ are defined analogously as $K$ and $L$ in \eqref{3.18}. Due to the absence of $G^{*}$, there is no leading term in \eqref{4.55} as the last term in \eqref{3.18}. One can easily apply our previous techniques and show every term in RHS of \eqref{4.55} is bounded by $O(N^{m(\alpha-1)-c_0})$. 

By taking complex conjugation in \eqref{eqn: 2.86} we also have
\begin{equation} \label{eqn: 2.87}
\bE \langle \underline{G^{*}} \rangle^n=O\big(N^{n(\alpha-1)-c_0}\big)
\end{equation}
for all $n \ge 2$.
Now \eqref{result} follows from \eqref{eqn: 2.85}, \eqref{eqn: 2.86}, and \eqref{eqn: 2.87} combined with induction. This concludes the proof of Lemma \ref{mainlem} (i).

\subsection{Proof of Lemma \ref{mainlem} (ii)} \label{sec3.3}
Again by the resolvent identity and the cumulant expansion, we have
\begin{equation} \label{3.76}
\bE \underline{G} = \frac{1}{U} \Big( 1+\bE\langle \underline{G}\rangle^2+\frac{1}{N}\bE\underline{G^2}-K^{(4)}-L^{(4)}\Big)\,,
\end{equation}
where $U\deq -z-\bE \underline{G}$, $z=E+\mathrm{i}\eta$, 
\begin{equation}
K^{(4)}=N^{-2} \sum\limits_{i} \bE\frac{\partial G_{ii}}{\partial H_{ii}}(\zeta_i-2)\,,
\end{equation}
and
\begin{equation} \label{3.77}
L^{(4)}= N^{-1} \sum\limits_{i,j} \left[ \sum\limits_{k=2}^l \frac{1}{k!} \mathcal{C}_{k+1}(H_{ji}) \bE \frac{\partial^k G_{ij}}{\partial {H_{ji}}^k} +R_{l+1}^{(4,ji)}  \right]\,.
\end{equation}
Here $R_{l+1}^{(4,ji)}$ is a remainder term defined analogously to $R_{l+1}^{(ji)}$ in (\ref{tau}). We can argue similarly as in the proof of Lemma \ref{termL} (i) and show that $R_{l_0+1}^{(4,ji)}=O(N^{-2})$ for some $l_0 \in \bN$. Thus we have $|L^{(4)}| \le \sum\limits_{k=2}^{l_0} O(J^{(4)}_k)+O(N^{-1})$, where
\begin{equation}
J_k^{(4)}\deq N^{-(k+3)/2}\sum\limits_{i,j} \bigg| \bE  \frac{\partial^k G_{ij}}{\partial {H_{ji}}^k} \bigg|\,.
\end{equation}
Analogously to the proof of Lemma \ref{termL} (ii), we find
\begin{equation*} 
J^{(4)}_2 =O(N^{-5/2} \cdot N^2 \cdot N^{(\alpha-1)/2+\varepsilon})=O(N^{\alpha/2-1+\varepsilon})\,,
\end{equation*}
for any fixed $\varepsilon >0$, and $J^{(4)}_k= O(N^{\alpha/2-1})$ for $k \ge 3$. This shows that $|L^{(2)}|=O(N^{\alpha/2-1+\varepsilon})$ for any fixed $\ee>0$. As in Lemma \ref{lem4.7}, one can show that $K^{(4)}=O(N^{-1})$. By \eqref{eqn: 2.86} and Lemma \ref{lem3.11}, we have 
$$
\bE \langle \underline{G} \rangle^2=O(N^{2\alpha-2-c_0}) \ \mbox{ and }\ \frac{1}{N}\bE \underline{G^2}=O(N^{\alpha-1-c_0})\,.
$$
Altogether we have
\begin{equation} \label{3.78}
\bE \underline{G}(z+\bE \underline{G})+1=O(N^{\alpha-1-c_0})\,.
\end{equation}
Recall that $m(z)$ is the unique solution of $x^2+zx+1=0$ satisfying $\sgn (\im m(z))=\sgn (\im z)=\sgn (\eta)=1$. Let $\tilde{m}(z)$ be the other solution of  $x^2+zx+1=0$. An application of Lemma 5.5 in \cite{11} gives
\begin{equation} \label{4.64}
	\min\{ |\bE  \underline{G} -m(z)|,\,|\bE  \underline{G} -\tilde{m}(z)|\}=\frac{O(N^{\alpha-1-c_0})}{\sqrt{\kappa}} =O(N^{\alpha-1-c_0})\,,
\end{equation}
where we recall the definition \eqref{def_kappa} of $\kappa$.
Since $G=(H-z)^{-1}$, we know that $\sgn (\im \ul G)=\sgn (\im z)>0$. Also, we have $\im \tilde{m}(z) \le -c$ for some $c=c(\kappa)>0$. This shows $|\bE  \underline{G} -\tilde{m}(z)|\ge c$.
Thus from \eqref{4.64} we have
$$
|\bE  \underline{G} -m(z)|=O(N^{\alpha-1-c_0})\,,
$$
which completes the proof.
\subsection{Proof of Theorem \ref{weakthm1}} \label{sec3.4}
Let $\tilde{Y}(b)\deq N^{1-\alpha} \langle \underline{G(E+b\eta)} \rangle$. As in the one-dimensional case, Proposition \ref{prop3.3}, Theorem \ref{weakthm1} follows from the following lemma, which generalizes Lemma \ref{mainlem}.
\begin{lemma} \label{lem4.9} 
	Let $c_0$ be defined as in $(\ref{c_0})$. Under the assumptions of Theorem \ref{weakthm1} the following holds.
	\begin{enumerate}
		\item For fixed $m, n \ge 1$ and $i_1,\dots,i_m,j_1,\dots,j_n \in \{1,2,\dots,p\}$, we have
		\begin{equation} \label{gresult}
		\bE \big[\tilde{Y}\left(b_{i_1}\right)\cdots \tilde{Y}\left(b_{i_m}\right) \tilde{Y}\left(\overline{b}_{j_1}\right)\cdots \tilde{Y}\left(\overline{b}_{j_n}\right)\big]=
		\begin{cases}
		\sum\prod\frac{-2}{(b_{i_l}-\overline{b}_{j_k})^2}+O(N^{-c_0}) & \txt{if } m=n
		\\
		O(N^{-c_0}) & \txt{if } m \ne n\,,
		\end{cases}
		\end{equation}
		where the notation $\sum\prod$ means summing over all distinct ways of partitioning $b_{i_1},\dots,b_{i_m},\overline{b}_{j_1},\dots,\overline{b}_{j_n}$ into pairs $b_{i_l},\overline{b}_{j_k}$, and each summand is the product of the n pairs.
		\item For any fixed $b \in \bH$, we have
		\begin{equation} \label{Y}
		\hat{Y}(b)-\tilde{Y}(b)=\bE\hat{Y}(b)=O(N^{-c_0})\,.
		\end{equation}
	
	\end{enumerate}
\end{lemma}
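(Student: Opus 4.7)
The proof of Lemma \ref{lem4.9} proceeds by induction on $m+n$ and closely mirrors that of Lemma \ref{mainlem}. For part (ii), the derivation of \eqref{3.74} in Section \ref{sec3.3} uses no property of $z = E + \ii\eta$ beyond $|\im z| \asymp \eta$ and $z$ staying in a fixed compact subset of $\bH$, both of which hold uniformly for $z = E + b\eta$ with $b$ in any fixed compact subset of $\bH$. Hence $\bE \ul{G(E+b\eta)} - m(E+b\eta) = O(N^{\alpha-1-c_0})$, and multiplying by $N\eta = N^{1-\alpha}$ yields \eqref{Y}.

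For part (i), write $z_l \deq E+b_{i_l}\eta$, $w_k \deq E+\ol{b}_{j_k}\eta$, and $W \deq \prod_{l=1}^m \langle \ul{G(z_l)}\rangle \prod_{k=1}^n \langle \ul{G(w_k)}\rangle$; the cases $m+n \leq 1$ are trivial. For $m+n \geq 2$, I apply the resolvent identity to the factor $\ul{G(z_1)}$ and then the cumulant expansion (Lemma \ref{lem:3.1}) with $f = \langle W/\langle\ul{G(z_1)}\rangle\rangle G(z_1)_{ij}$ and $h = H_{ji}$, obtaining a self-consistent equation that generalizes \eqref{3.18}. Up to a prefactor $1/T_1$ with $T_1 \deq -z_1 - 2\bE\ul{G(z_1)}$, and up to subleading terms and analogs of $K$, $L$, and $J_k$, its right-hand side is
\begin{equation*}
\sum_{\zeta} \frac{2}{N^2}\,\bE\big[\Omega_\zeta\cdot\ul{G(z_1)G(\zeta)^2}\big]\,,\qquad \Omega_\zeta \deq \frac{W}{\langle\ul{G(z_1)}\rangle\langle\ul{G(\zeta)}\rangle}\,,
\end{equation*}
where $\zeta$ ranges over $\{z_2,\dots,z_m,w_1,\dots,w_n\}$.

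Next I apply $\ul{G(z_1)G(\zeta)^2} = -\ul{G(\zeta)^2}/(z_1-\zeta) + (\ul{G(z_1)}-\ul{G(\zeta)})/(z_1-\zeta)^2$ and isolate the dominant contribution. The key qualitative observation is that, as $\eta \to 0$, $\bE[\ul{G(z_1)}-\ul{G(\zeta)}] \approx m(z_1)-m(\zeta)$ behaves very differently depending on whether $\zeta$ lies in the opposite or the same half-plane as $z_1$: for $\zeta = w_k$ it tends to the jump $2\ii\pi\varrho(E)$ of $m$ across the bulk spectrum, whereas for $\zeta = z_{l'}$ it is only $O(\eta) = O(N^{-\alpha})$. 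Hence same-side contributions are suppressed by one power of $\eta$ and absorbed into the error. Combined with $T_1 \to -2\ii\pi\varrho(E)$, which follows from \eqref{2.6} and $|m|^2 = 1$ in the bulk, the leading identity reads
\begin{equation*}
\bE W = N^{2(\alpha-1)} \sum_{k=1}^n \frac{-2}{(b_{i_1}-\ol{b}_{j_k})^2}\,\bE\Omega_{1,k} + O\pb{N^{(m+n)(\alpha-1)-c_0}}\,,
\end{equation*}
where $\Omega_{1,k}$ is the product of the $m+n-2$ centered factors remaining after $\langle\ul{G(z_1)}\rangle$ and $\langle\ul{G(w_k)}\rangle$ are removed. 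Multiplying by $N^{(m+n)(1-\alpha)}$ and applying the inductive hypothesis to $\bE\Omega_{1,k}$ assembles the full Wick sum: the sum over $k = 1,\dots,n$ fixes the partner of $b_{i_1}$, while the induction pairs the remaining $m-1$ plus $n-1$ arguments. When $m \neq n$, the mismatch $m-1 \neq n-1$ forces $N^{(m+n-2)(1-\alpha)}\bE\Omega_{1,k} = O(N^{-c_0})$ by induction, producing the stated error.

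The main obstacle is uniform control, in the multi-point setting, of the subleading terms, the $K$, $L$, $J_k$ errors, and the remainders $R_{l+1}^{(ji)}$ arising in the cumulant expansion. These are polynomials of fixed degree in entries of $G(z_l), G(w_k)$ and in traces $\ul{G(z_l)^p}, \ul{G(w_k)^p}$, whose sizes are bounded uniformly in $z \in \mathbf{S}$ by Lemma \ref{prop4.4}. The combinatorial exponent counting from Steps 1--4 of the proof of Lemma \ref{termL} (ii), combined with Lemma \ref{lem3.11} for the $\bE \ul{G(z)^k}$ corrections and Lemma \ref{prop_prec} for moving expectations inside $\prec$, is insensitive to replacing common powers by distinct spectral parameters, and places every error term in $O(N^{(m+n)(\alpha-1)-c_0})$, closing the induction.
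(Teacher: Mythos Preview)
Your proposal is correct and follows essentially the same approach as the paper: expand one factor via the resolvent identity and cumulant expansion to obtain a multi-parameter analog of \eqref{3.18}, then identify the opposite-half-plane pairings as the leading term using the resolvent identity $G(z_1)G(\zeta) = (G(z_1)-G(\zeta))/(z_1-\zeta)$ together with $T_1 \approx -(m(z_1)-m(w_k))$, and close by induction on $m+n$. The paper's proof is terser---it simply points to Section \ref{sec:3.2} and notes that the only change is the constant arising from the new resolvent identity---whereas you spell out explicitly why same-side pairings are suppressed (via the smoothness of $m$ giving $m(z_1)-m(z_{l'}) = O(\eta)$), which is a cleaner way to see this than invoking an analog of Lemma \ref{lem3.11} for mixed products. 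One small point: your induction also needs the cases where $\Omega_{1,k}$ has no upper-half-plane factors (i.e.\ $m=1$), which requires the analog of \eqref{eqn: 2.87} for distinct parameters $\bar b_{j_1},\dots,\bar b_{j_{n-1}}$; the paper states this explicitly as a separate ingredient, and you should too.
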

Suppose Lemma \ref{lem4.9} holds. Result \eqref{gresult} implies
	\begin{equation} \label{2.8}
	(\tilde{Y}(b_1),\dots,\tilde{Y}(b_p)) \overset{d}{\longrightarrow}  (Y(b_1),\dots,Y(b_p))\,.
	\end{equation}
	Theorem \ref{weakthm1} then follows from \eqref{Y}.
	\begin{proof} [Proof of Lemma \ref{lem4.9}]
The proof is similar to that of Lemma \ref{mainlem}. Indeed, we see that
\begin{equation}
\begin{aligned}
&\bE \big[\tilde{Y}\left(b_{i_1}\right)\cdots \tilde{Y}\left(b_{i_m}\right) \tilde{Y}\left(\bar{b}_{j_1}\right)\cdots \tilde{Y}\left(\bar{b}_{j_n}\right)\big]\\=&\,N^{(m+n)(1-\alpha)}\bE \big[\langle \ul{G(E+b_{i_1}\eta)} \rangle\cdots \langle \ul{G(E+b_{i_m}\eta)} \rangle \langle \ul{G(E+\overline{b}_{j_1}\eta)} \rangle\cdots \langle \ul{G(E+\overline{b}_{j_n}\eta)} \rangle \big]\,,
\end{aligned}
\end{equation}
which can be computed in the same way as $\bE \langle \underline{G} \rangle^m \langle \underline{G^{*}} \rangle^n=\bE \langle \underline{G(E+\mathrm{i}\eta)} \rangle^m \langle \underline{G(E-\mathrm{i}\eta)} \rangle^n$ in Section \ref{sec:3.2}. Most of our previous techniques and estimates can be applied to the new computation, and the only difference is when using the resolvent identity (for example in (\ref{eqn: 2.82})), we now have 
	\begin{equation*} \label{wt}
		G(E+b_{i_k}\eta)G(E+\overline{b}_{j_l}\eta)=\frac{1}{b_{i_k}-\overline{b}_{j_l}}(G(E+b_{i_k}\eta)-G(E+\overline{b}_{j_l}\eta))
		\end{equation*}
		instead of 
	\begin{equation*} \label{kk}
	GG^{*}=\frac{1}{2\mathrm{i}}(G-G^{*})\,.
	\end{equation*}
		This will give us different constants in the leading terms, and lead to the induction step
		$$\begin{aligned}
		&\bE \big[\tilde{Y}\left(b_{i_1}\right)\cdots \tilde{Y}\left(b_{i_m}\right) \tilde{Y}\left(\overline{b}_{j_1}\right)\cdots \tilde{Y}\left(\overline{b}_{j_n}\right)\big]\\=&\, \sum\limits_{k=1}^{n}\frac{-2}{(b_{i_m}-\overline{b}_{j_k})^2}\bE \big[\tilde{Y}\left(b_{i_1}\right)\cdots \tilde{Y}\left(b_{i_{m-1}}\right) \tilde{Y}\left(\overline{b}_{j_1}\right)\cdots \tilde{Y}\left(\overline{b}_{j_n}\right)/\tilde{Y}\left(\overline{b}_{j_k}\right)\big]+O(N^{-c_0})
		\end{aligned}
		$$ 
		for $m,n \ge 1$. One can also show that 
		$$
		\bE \big[\tilde{Y}\left(b_{i_1}\right)\cdots \tilde{Y}\left(b_{i_m}\right) \big]=O(N^{-c_0})\ \ \mbox{ and }\ \ \bE \big[\tilde{Y}\left(\overline{b}_{j_1}\right)\cdots \tilde{Y}\left(\overline{b}_{j_n}\right)\big]=O(N^{-c_0})
		$$
		for $m,n \ge 2$. These results together imply \eqref{gresult}.

		Moreover, \eqref{Y} says nothing but $N^{1-\alpha} \bE \big( \ul{G(E+b\eta)}-m(E+b\eta)\big)=O(N^{-c_0})$, and this can be shown using the steps in Section \ref{sec3.3}, in which we proved $N^{1-\alpha} \bE \big( \ul{G(E+\mathrm{i}\eta)}-m(E+\mathrm{i}\eta)\big)=O(N^{-c_0})$.

\end{proof}

\section{Convergence of general functions} \label{sec4}

Similar as in the resolvent case, in Section \ref{sec4} we prove the following analogue of Theorem {\ref{mainthm2}}.
\begin{theorem} \label{weakthm2}
Theorem \ref{mainthm2} holds for Wigner matrices $H$ satisfying Definition \ref{def:dWigner}, and the convergence also holds in the sense of moments. 
\end{theorem}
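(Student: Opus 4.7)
The plan is to reduce Theorem \ref{weakthm2} to Theorem \ref{weakthm1} via the Helffer-Sj\"ostrand formula from Lemma \ref{HS}. Since the limiting Gaussian process is determined by its moments, it suffices to prove convergence of the joint moments $\bE \prod_{k=1}^p \hat Z(f_k)$ to the corresponding Gaussian moments induced by \eqref{2.13}. Applying Lemma \ref{HS} to $f_\eta(H)\deq f((H-E)/\eta)$ and subtracting $N\int\varrho\,f_\eta$ yields the representation
\begin{equation*}
\hat Z(f) \;=\; \frac{1}{\pi} \int_{\bC} \partial_{\bar z}\pb{\tilde f_\eta(z)\chi(z)} \cdot N\qb{\ul{G(z)}-m(z)} \, \dd^2 z\,,
\end{equation*}
and after the change of variables $z = E+\eta w$ the inner factor becomes $\hat Y(w) = N\eta\,(\ul{G(E+\eta w)}-m(E+\eta w))$, to which Theorem \ref{weakthm1} applies pointwise in $w$ with $\im w > 0$.

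The integrand has support extending all the way to $\im z = 0$, but the bound $|N(\ul G(z)-m(z))|\prec 1/\im z$ from the local semicircle law degenerates there. Following the introduction, I would split the $z$-integral using two truncation scales $N^{-1} \ll \omega \ll \sigma \ll \eta$. On the \emph{small} region $\im z < \omega$, the $\cal C^{1,r,s}$ regularity of $f$ gives, via \eqref{tildef_1}, the bound $|\partial_{\bar z}\tilde f_\eta(z)| \le C\eta^{-1-r}(\im z)^r$; combined with the local law this region contributes at most $O((\omega/\eta)^r)$ to any joint moment, which is negligible. On the \emph{large} region $\im z \ge \sigma$, the rescaled resolvent process $\hat Y$ is controlled uniformly, so a joint version of Lemma \ref{lem4.9}, combined with dominated convergence, yields the Gaussian limit of the joint moments with covariance $-2/(w_k-\bar w_l)^2$.

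The main obstacle is the \emph{intermediate} region $\omega \le \im z < \sigma$: the naive local-law bound is not sharp enough for its contribution to the $p$th moment of $\hat Z(f)$ to vanish as $\sigma\to 0$, because mixed moments $\bE \prod_k [\ul{G(z_k)}-m(z_k)]$ blow up on the diagonals $z_k = \bar z_l$. To handle this I would extend the self-consistent equation \eqref{3.18} from the one-point resolvent to joint moments of $\ul G$ evaluated at several points $z_1,\dots,z_p \in \bH$; this is the analogue of the equation \eqref{4.20} alluded to in Section \ref{sec:1}, whose error terms are estimated by the same cumulant-expansion plus resolvent-power machinery developed in Section \ref{sec3} (Lemmas \ref{prop4.4}, \ref{termL}, \ref{lem3.11}), but now allowing spectral parameters with arbitrarily small imaginary part. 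The two cutoffs play complementary roles: $\sigma$ is large enough that Lemma \ref{lem4.9} applies uniformly in $\im z \ge \sigma$, while $\omega \gg N^{-1}$ keeps the cumulant expansion and local law effective on $\im z \ge \omega$, and both are $\ll \eta$ so as not to distort the leading contribution. Finally, once the small and intermediate regions are shown to contribute $o(1)$, the leading term reduces for even $p = 2q$ to a sum over pair matchings with each pair $\{k,l\}$ contributing $-\frac{2}{\pi^2}\int \frac{\partial_{\bar z}\tilde f_k(z)\,\partial_{\bar w}\tilde f_l(w)}{(z-\bar w)^2}\,\dd^2z\,\dd^2 w$, which by a standard complex-analytic identity equals the right-hand side of \eqref{2.13}; odd moments vanish. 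Convergence of moments then implies convergence in distribution, completing the proof of Theorem \ref{weakthm2}.
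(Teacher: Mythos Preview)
Your proposal has the right ingredients---Helffer--Sj\"ostrand, truncations, a multi-point self-consistent equation---but the architecture differs from the paper's in a way that creates a real gap.

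The paper does \emph{not} use Theorem~\ref{weakthm1} (or Lemma~\ref{lem4.9}) as a black box anywhere in the proof of Theorem~\ref{weakthm2}. The key structural choice you missed is that the Helffer--Sj\"ostrand cutoff is taken at scale $\sigma$, i.e.\ $\chi(z/\sigma)$ rather than $\chi(z)$; see \eqref{4.6}--\eqref{4.7}. Consequently the entire integral is supported in $|\im z|\le 2\sigma\ll\eta$, and there is no ``large region'' at all. The two truncation scales $\omega\ll\sigma$ play different roles from what you describe: $\sigma$ is the scale of the cutoff (so that the dominant term in $\varphi_f$ is $\frac{\ii}{\sigma}f_\eta(x)\chi'(y/\sigma)$, supported at $|y|\asymp\sigma$), while $\omega$ is a lower truncation below which the local law combined with the H\"older regularity of $f'$ kills the contribution (Lemma~\ref{lemF}). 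On the remaining region $|\im z|\in[\omega,2\sigma]$ the paper runs the multi-point self-consistent equation \eqref{4.20} directly---essentially redoing the analysis of Section~\ref{sec3} with $\alpha$ replaced by $\alpha+\gamma$ (Lemma~\ref{analogue})---and extracts the leading term by a further case splitting according to $\operatorname{sgn}(y_ky_n)$ and $|x_k-x_n|$ (Lemmas~\ref{lem4.6}--\ref{lem:4.7}).

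Your plan to invoke Lemma~\ref{lem4.9} on $\im z\ge\sigma$ does not work as stated: since $\sigma\ll\eta$, after rescaling this is $\im w\ge\sigma/\eta\to 0$, and Lemma~\ref{lem4.9} is proved only for \emph{fixed} $b\in\bH$ with error $O(N^{-c_0})$ that does not survive uniformly as $\im b\to 0$. (If instead you take the cutoff at scale $O(1)$, the integral extends to macroscopic $\im z$, outside the scope of Theorem~\ref{weakthm1} altogether.) Your claim that the intermediate region contributes $o(1)$ is therefore also wrong: in the paper's setup the \emph{entire} leading contribution lives at $|\im z|\asymp\sigma$, well inside what you call the intermediate region. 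Finally, the reduction of the covariance to \eqref{2.13} is not a ``standard complex-analytic identity'' when $f\in\cal C^{1,r,s}$: it requires the explicit decomposition $\psi_f=\psi_{f,1}+\psi_{f,2}+\psi_{f,3}$ and the term-by-term estimates of Step~2 in Lemma~\ref{lem:4.7}, since $\tilde f$ is only an almost-analytic extension with limited regularity.
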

Let us abbreviate $f_{\eta}(x)\deq f\big(\frac{x-E}{\eta}\big)$, and denote $[\tr f_{\eta}(H)]\deq \tr f_{\eta}(H)- N \int_{-2}^{2} \varrho(x) f_{\eta}(x) \, \mathrm{d}x$. Our next result is a particular case of Theorem \ref{weakthm2}.
\begin{proposition} \label{prop:4.2}
	Let $\eta,E,H$ be as in Theorem \ref{weakthm2}. Then
	\begin{equation}
[  \tr f_{\eta}(H)] \overset{d}{\longrightarrow} \mathcal{N}\bigg(0\,,\,\frac{1}{2\pi^2}\int \bigg( \frac{f(x)-f(y)}{x-y}\bigg)^2 \dd x \, \dd y \,\bigg)
\end{equation}
as $N \to \infty$. The convergence also holds in the sense of moments.
\end{proposition}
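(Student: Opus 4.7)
The plan is to compute all moments $\bE [\tr f_{\eta}(H)]^p$ and show they converge to the corresponding Gaussian moments determined by the stated variance. The bridge between the two is the Helffer--Sj\"ostrand formula from Lemma \ref{HS}, which (after picking a cutoff $\chi$ with $\chi(y) = 1$ for $\abs{y} \leq 1$) gives the representation
\begin{equation*}
[\tr f_{\eta}(H)] \;=\; \frac{1}{\pi}\int_{\bC} \partial_{\bar z}\bigl(\tilde f_{\eta}(z)\chi(z/\eta)\bigr)\,\bigl\langle \tr G(z)\bigr\rangle \, \dd^2 z\,.
\end{equation*}
Raising to the $p$-th power and taking expectation formally produces a $p$-fold integral whose integrand is $\bE \prod_{i=1}^{p} \langle \tr G(z_i)\rangle$. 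If $z_i = E + b_i \eta$ with $\abs{\im b_i}$ of order one, these expectations are exactly the quantities computed in Lemma \ref{lem4.9}, and a residue/rescaling computation identifies the leading contribution with Gaussian moments determined by the covariance $-2/(b_1-\bar b_2)^2$; the $H^{1/2}$-form of the variance \eqref{intro_covariance} will emerge at the end from integrating this covariance against $\partial_{\bar z}\tilde f_\eta$ in both variables.

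To make the above rigorous, I would truncate the $z$-integration at two scales $N^{-1} \ll \omega \ll \sigma \ll \eta$ in $\abs{\im z}$, splitting into the regions $\abs{\im z} > \sigma$, $\omega \leq \abs{\im z} \leq \sigma$, and $\abs{\im z} < \omega$. In the outer region, after the change of variables $z = E + b\eta$ the integrand becomes $\hat Y(b)$ times a smooth weight supported on $\abs{\im b} \asymp 1$, and Lemma \ref{lem4.9} (i) applied to products of $\tilde Y(b_i)$'s, combined with the estimate \eqref{Y}, gives the convergence of the $p$-th moment to the Gaussian one up to $O(N^{-c_0})$ errors times a harmless $\abs{\log \sigma}$-factor from the integration. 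In the intermediate annulus $\omega \leq \abs{\im z}\leq \sigma$, I would use the local semicircle law bound $\abs{\langle\ul{G(z)}\rangle} \prec 1/(N\abs{\im z})$ from \eqref{3.4} together with the pointwise estimate $\abs{\partial_{\bar z}(\tilde f_\eta \chi)} = O(\eta^{-1})\cdot O(\abs{\im z}/\eta)$ (using the $\cal C^2$-extension \eqref{tildef_2} after a regularization that only costs a scale-$\omega$ error) to bound this contribution by a small power of $\sigma/\eta$. For $\abs{\im z} < \omega$, I would use the $\cal C^{1,r,s}$-regularity of $f$ via the extension \eqref{tildef_1}, which yields $\abs{\partial_{\bar z}\tilde f_\eta(z)} \lesssim \eta^{-1-r}\abs{\im z}^r$, combined with the deterministic bound $\abs{\tr G(z)} \leq N/\abs{\im z}$, to control the remaining contribution. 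Choosing $\omega, \sigma$ as small powers of $\eta$ and $N^{-1}$ makes each error negligible.

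The covariance identification is then a calculus exercise: plugging the limit $\bE Y(b_1)\overline{Y(b_2)} = -2/(b_1-\bar b_2)^2$ into the doubled H--S integral, writing $\partial_{\bar z}\tilde f_\eta$ explicitly, and using the integral identity $-(b_1-\bar b_2)^{-2} = \int_0^\infty \xi\, e^{\ii \xi(b_1-\bar b_2)}\,\dd\xi$ followed by Parseval recovers the Fourier-side form $\frac{1}{\pi}\int \abs{\xi}\abs{\hat f(\xi)}^2\dd\xi$ and hence the double-integral form in \eqref{2.13}. Higher moments are handled analogously: Lemma \ref{lem4.9} (i) tells us that the $p$-fold moments of $\tilde Y$ satisfy Wick's theorem up to $O(N^{-c_0})$, and integrating Wick's identity against the H--S weights produces precisely the Gaussian moments of the variance above.

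The main obstacle is the integration of the centered resolvent trace down to scales $\abs{\im z}$ approaching $N^{-1}$. The moment estimate \eqref{gresult} from Lemma \ref{lem4.9} deteriorates as $\im b \to 0$ (both through the covariance and through the implicit $\kappa$-dependent constants), so the outer region cannot be pushed too close to the real axis, while the rough test function $f \in \cal C^{1,r,s}$ prevents us from gaining more than $\abs{\im z}^r$ from $\partial_{\bar z}\tilde f_\eta$. Balancing these two competing estimates is what forces the two-scale truncation $\omega \ll \sigma \ll \eta$, and verifying that intermediate-region errors can be absorbed uniformly in the $p$-fold moment expansion---in particular that products of $\langle \tr G(z_i)\rangle$ at different scales can be estimated by Cauchy--Schwarz without losing the gained smallness---is where the bulk of the technical work will lie.
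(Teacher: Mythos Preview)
Your outer-region step has a genuine gap. Lemma \ref{lem4.9} is stated and proved for \emph{fixed} $b_1,\dots,b_p\in\bH$, and its $O(N^{-c_0})$ error hides constants that blow up polynomially as $\abs{\im b_i}\to 0$ (trace the dependence on $\abs{\im z}^{-1}$ through Lemmas \ref{prop4.4}, \ref{termL}, \ref{lem3.11} and the bound \eqref{T}). With your cutoff $\chi(z/\eta)$, the Helffer--Sj\"ostrand weight $\varphi_f$ is \emph{not} supported on $\abs{\im b}\asymp 1$: the $\chi$-term $(f'_\eta(x+y)-f'_\eta(x))\chi(y/\eta)$ lives on all of $\abs{y}\leq 2\eta$, and a direct computation using \eqref{4.11} shows that $\int_{\sigma<\abs{y}<\eta/2}\abs{\varphi_f}\,\abs{y}^{-1}\,\dd x\,\dd y$ is of order one, not $o(1)$. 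Hence the part of your outer region with $\abs{\im b}\ll 1$ carries nonnegligible mass, and Lemma \ref{lem4.9} cannot be applied there as a black box with a uniform error; the ``harmless $\abs{\log\sigma}$-factor'' does not capture this blow-up.

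The paper circumvents this by two devices that are missing from your sketch. First, the cutoff is placed at a scale $\sigma\ll\eta$ rather than at $\eta$ (see \eqref{4.6}--\eqref{4.7}); this forces the first term of $\varphi_f$ to be uniformly of size $O(N^{-rq_0\beta})$ and concentrates the leading mass on the $\chi'$-terms supported on the single annulus $\abs{y}\in[\sigma,2\sigma]$. Second, the paper does not invoke Lemma \ref{lem4.9} at all: it derives a fresh self-consistent equation \eqref{4.20} for $\bE\langle\ul{{}^1G}\rangle\cdots\langle\ul{{}^nG}\rangle$ at arbitrary $z_1,\dots,z_n$ in the good region, with the $\abs{\im z_i}$-dependence tracked explicitly in Lemma \ref{analogue}. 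This yields the Wick recursion \eqref{wick} directly, whose single pairing factor is then evaluated by a real-variable computation (Lemma \ref{lem:4.7}), not through your proposed Fourier identity. Your intermediate- and inner-region estimates are in the right spirit, but without these two ingredients the outer region remains uncontrolled.
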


Our main work in is section will be to show the above 1-dimensional case, since the proof can easily be extended to the general case (see Section \ref{sec4.6}). Recall that for a random variable $X$, $\langle X \rangle\deq X-\bE X$. Proposition \ref{prop:4.2} is a direct consequence of the following lemma.
\begin{lemma} \label{lem4.3}
	Under the conditions of Theorem \ref{weakthm2}, we have the following results.
	\begin{enumerate}
		\item For any $n \ge 2 $
		\begin{equation} \label{wick}
		 \bE \langle \tr f_{\eta}(H)\rangle^n=\frac{n-1}{2\pi^2}\int \bigg( \frac{f(x)-f(y)}{x-y}\bigg)^2 \dd x \, \dd y \cdot  \bE \langle \tr f_{\eta}(H)\rangle^{n-2}+O(N^{-c/n^2})\,,
		\end{equation}
		where $c=c\,(r,s,\alpha)>0$.
		\item The random variables $[  \tr f_{\eta}(H)]$ and $ \langle \tr f_{\eta}(H)\rangle$ are close in the sense that
		\begin{equation} \label{compare}
		[  \tr f_{\eta}(H)]-\langle \tr f_{\eta}(H)\rangle=\bE 	[  \tr f_{\eta}(H)]=O(N^{-rs^2c_0/16})\,,
		\end{equation}
		 with $c_0$ defined in (\ref{c_0}).
	\end{enumerate} 
\end{lemma}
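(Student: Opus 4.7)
The overall plan is to lift the resolvent estimates from Section \ref{sec3} to general test functions via the Helffer--Sj\"ostrand formula (Lemma \ref{HS}). For both parts we write $\tr f_\eta(H)$ as a two-dimensional integral of $\tr G(z)$ against the weight $\partial_{\bar z}(\tilde f_\eta(z)\chi(z/\eta))$, and then apply either the self-consistent equation approach of Section \ref{sec:3.2} (for part (i)) or the estimate of Section \ref{sec3.3} (for part (ii)) to the resulting integrand. The key difficulty is that $z$ now ranges over the whole complex plane and in particular over regions where $|\im z|$ is much smaller than $\eta$, so we introduce two truncation scales $N^{-1}\ll \omega\ll\sigma\ll \eta$ as advertised in the introduction, and on each regime use a different mechanism to control the integrand.

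For part (i) I would proceed by induction on $n$. Applying Helffer--Sj\"ostrand to a single factor gives
\begin{equation*}
\E\langle \tr f_\eta(H)\rangle^n \;=\; \frac{N}{\pi}\int_{\bC}\partial_{\bar z}\pb{\tilde f_\eta(z)\chi(z/\eta)}\,\E\,\langle \langle \tr f_\eta(H)\rangle^{n-1}\rangle\,\ul{G(z)}\,\dd^2z\,.
\end{equation*}
The inner expectation $I(z)\deq \E\,\langle \langle \tr f_\eta(H)\rangle^{n-1}\rangle\,\ul{G(z)}$ should be analyzed by the same cumulant-expansion procedure that led to \eqref{3.18}: write $\ul{G(z)}=\frac{1}{Nz}\sum_{ij}G_{ij}(z)H_{ji}-\frac{1}{z}$ and apply Lemma~\ref{lem:3.1}. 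The new element compared with Section~\ref{sec:3.2} is the derivative of the $(n-1)$ centered factors $\langle \tr f_\eta(H)\rangle$ with respect to $H_{ji}$; using Helffer--Sj\"ostrand on each such factor and the identity \eqref{3.15} introduces kernels of the form $\sum_{ij}G_{ij}(z)(G^2(w))_{ij}$, which the resolvent identity converts into pairing kernels of type $(z-w)^{-2}$. Summing over the $(n-1)$ positions where the derivative may fall, integrating $z$ and each $w$ against the Helffer--Sj\"ostrand weight, and using the standard identity
\begin{equation*}
\frac{1}{2\pi^2}\int\pbb{\frac{g(x)-g(y)}{x-y}}^2\dd x\,\dd y \;=\; -\frac{2}{\pi^2}\int\int\partial_{\bar z}\tilde g(z)\,\partial_{\bar w}\tilde g(w)\,\frac{\dd^2z\,\dd^2 w}{(z-w)^2}
\end{equation*}
(applied with appropriate sign restrictions on $\im z,\im w$) identifies the leading contribution with the right-hand side of \eqref{wick}. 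The moment estimates of Lemma \ref{lem4.9} control the remaining subleading terms in the self-consistent equation.

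The genuine hard part lies in the integration over $z$: the pointwise control of $I(z)$ provided by the self-consistent equation degrades as $|\im z|\downarrow 0$. On $|\im z|\ge \sigma$ the errors from Lemma \ref{lem4.9} are directly integrable. On $\omega\le|\im z|\le\sigma$ I would exploit the $r$-H\"older regularity of $f'$: by \eqref{tildef_1} we have $|\partial_{\bar z}(\tilde f_\eta(z)\chi(z/\eta))|\lesssim (|\im z|/\eta)^{r}\eta^{-1}\chi(\im z/\eta)$, which combined with the trivial bound $|\ul{G(z)}|\le |\im z|^{-1}$ produces a contribution that vanishes as $\sigma/\eta\to 0$. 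Finally, on $|\im z|\le \omega$ I would integrate by parts in $\re z$ (or use that $\partial_{\bar z}\tilde f_\eta(z)$ is $O(|\im z|^r)$ so the near-axis slab has total mass $\lesssim \omega^{1+r}\eta^{-1-r}$) together with the deterministic bound $\norm{G(z)}\le |\im z|^{-1}$ restricted to $|\im z|\ge 1/N$. The exponent $c/n^2$ then arises from optimizing $\omega$ and $\sigma$ against the $(n-1)$-fold nesting of such estimates required in the induction.

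Part (ii) is simpler. The identity $[\tr f_\eta(H)]-\langle \tr f_\eta(H)\rangle=\E[\tr f_\eta(H)]$ is immediate. Since $m(z)=\int\varrho(x)/(x-z)\dd x$, a second application of Helffer--Sj\"ostrand yields
\begin{equation*}
\E[\tr f_\eta(H)] \;=\; \frac{N}{\pi}\int_{\bC}\partial_{\bar z}\pb{\tilde f_\eta(z)\chi(z/\eta)}\,\pb{\E\ul{G(z)}-m(z)}\,\dd^2z\,.
\end{equation*}
The argument in Section \ref{sec3.3} that established $\E\ul{G(E+\ii\eta)}-m(E+\ii\eta)=O(N^{\alpha-1-c_0})$ extends, with $\eta$ replaced by general $|\im z|$, to a pointwise bound of the form $|\E\ul{G(z)}-m(z)|=O(N^{-1}|\im z|^{-1}\cdot(\text{gain factor}))$ for $|\im z|\ge \sigma$. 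Combined with the H\"older-type bound on $|\partial_{\bar z}(\tilde f_\eta\chi)|$ and a near-axis truncation at scale $\omega$ as in part~(i), balancing the two truncation scales against the decay rates and the decay of $f,f'$ at infinity (governed by $s$) produces the stated exponent $-rs^2 c_0/16$. I expect the main obstacle here, as in part (i), to be the careful bookkeeping of errors on the slabs $\omega\le |\im z|\le \sigma$ and $|\im z|\le \omega$, where neither the self-consistent equation nor the trivial resolvent bound is individually sharp enough, and the H\"older regularity of $f'$ has to be used to close the estimate.
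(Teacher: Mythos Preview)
Your strategy matches the paper's --- Helffer--Sj\"ostrand plus cumulant expansion plus truncation --- but there is a genuine gap in your near-axis estimates and a structural difference worth noting.

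\textbf{The gap.} On the regions $|\im z|\le\sigma$ you invoke the deterministic bound $|\ul{G(z)}|\le|\im z|^{-1}$. This does not close the estimate: your integral carries an explicit factor $N$ in front, so with the trivial bound the slab $0\le|\im z|\le\sigma$ contributes roughly $N(\sigma/\eta)^r$, which is not $o(1)$ unless $\sigma\ll\eta N^{-1/r}$ --- impossible for small $r$ since you need $\sigma\gg N^{-1}$. The quantity you actually have to bound is the \emph{centred} trace $\langle\ul{G(z)}\rangle$ (your $I(z)$ equals $\E[\langle\tr f_\eta(H)\rangle^{n-1}\langle\ul{G(z)}\rangle]$), and the paper uses the local law $|\langle\ul{G(z)}\rangle|\prec(N|\im z|)^{-1}$ from \eqref{3.4}, which exactly absorbs the $N$ prefactor and makes the H\"older mechanism integrable down to $\im z=0$. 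Your alternative suggestions for $|\im z|\le\omega$ (integration by parts in $\re z$, or the slab-mass argument ``restricted to $|\im z|\ge 1/N$'') leave the strip $|\im z|<1/N$ uncontrolled and are unnecessary once the local law is used.

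\textbf{The structural difference.} The paper scales the Helffer--Sj\"ostrand cutoff as $\chi(z/\sigma)$ with $\sigma=N^{-\alpha-\beta}\ll\eta$, not $\chi(z/\eta)$ as you write. This is the main trick of the section. With the cutoff at $\sigma$, the weight $\partial_{\bar z}(\tilde f_\eta\chi)$ is supported in $|\im z|\le 2\sigma$ and its three summands (see \eqref{4.7}) separate cleanly: the $\chi'$-piece dominates the H\"older pieces by a factor $N^{r\beta}$, so the variance integral reduces to a single explicit term (the $\Psi_{3,3}$ of Lemma~\ref{lem:4.7}). With your $\chi(z/\eta)$ all three pieces are comparable at $|\im z|\asymp\eta$ and must each be tracked through the leading-order computation. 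The paper also applies Helffer--Sj\"ostrand to all $n$ factors at once, producing an $n$-fold integral over $\C^n$, and then performs a single cumulant expansion on the last resolvent. With this organisation the truncation $\omega$ has a clear role: on $\{\exists k:|y_k|<\omega\}$ the H\"older gain $N^{-rs\gamma/4}$ from that single variable beats the combined loss $N^{(n-1)\beta}$ from the others; this balance --- not an $(n-1)$-fold nesting of inductive estimates --- is where the $1/n^2$ in the final exponent comes from (via $\beta\propto 1/n^2$ in \eqref{beta}).

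For part (ii) your outline is correct; the paper again uses cutoff $\chi(\cdot/\sigma)$ (with a different $\beta=sc_0/4$) and a single truncation, bounding the slab $|\im z|\le\sigma$ by local law plus H\"older and the main region $\sigma\le|\im z|\le 2\sigma$ by the analogue of \eqref{3.74}.
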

Assume Lemma \ref{lem4.3} holds. Then (\ref{wick}) and Wick's theorem imply
\begin{equation} \label{2.14.1}
\langle \tr f_{\eta}(H)\rangle \overset{d}{\longrightarrow} \mathcal{N}\bigg(0\,,\,\frac{1}{2\pi^2}\int \bigg( \frac{f(x)-f(y)}{x-y}\bigg)^2 \dd x \, \dd y \,\bigg)
\end{equation}
as $N \to \infty$. Note that the above result is proved in a stronger sense that we have convergence in moments. Proposition \ref{prop:4.2} then follows from (\ref{compare}).

Sections \ref{subsec4.1} to {\ref{subsec4.3}} are devoted to proving Lemma \ref{lem4.3} (i). Before starting the proof, we give some explanations of the ideas, especially the choice of truncations in the proof.  We use Lemma \ref{HS} to write $f_{\eta}(H)$ in the form \eqref{4.6} below, where we scale the cutoff function $\chi$ to be supported in an interval of size $O(\sigma)$, with $N^{-1} \ll \sigma \ll \eta$. This scaling ensures that when we integrate $\varphi_f$, the integral of the last term in \eqref{4.7} below dominates over the others.

We then write $\bE \langle \tr f_{\eta}(H) \rangle^n$ as an integral over $\C^n$, written $\int F$ in \eqref{F} below. The leading contribution to $\int F$ arises from the region $\{|y_1|,\dots,|y_n| \ge \omega\}$, where $\omega \gg N^{-1}$ is a second truncation scale. In order to ensure that $\int F$ is small in the complementary region, we require that $\omega \ll \sigma$. Then, when estimating $\int_{|y_1|<\omega} F$, the integral over $z_1$ yields a factor that is small enough to compensate the integrals from the other variables. We use the notations $\sigma=N^{-(\alpha+\beta)}$ and $\omega=N^{-(\alpha+\gamma)}$, so that $0<\beta<\gamma$.
In addition, for all steps of the analysis to work, we have further requirements on the exponents $\gamma$ and $\beta$; for instance, the last step in \eqref{519} below requires $n\beta\le rs\gamma/4$. Combining all requirements, we are led to set $\beta$ as in \eqref{beta} below.

\subsection{Transformation by Helffer-Sj\"{o}strand formula} \label{subsec4.1}
Let $f \in {C}^{1,r,s}(\bR)$ with $r,s > 0$, and without loss of generality we assume $s\le1$. Fix $n \ge 2$, and define $\sigma\deq N^{-(\alpha+\beta)}$, where 
\begin{equation} \label{beta}
\beta\deq \frac{rs^2\,c_0}{24n^2}\,,
\end{equation}
and $c_0$ is defined in (\ref{c_0}). We define $\tilde{f}$ as in \eqref{tildef_1}. Let $\chi$ be as in Lemma \ref{HS} satisfying $\chi(y)=1$ for $|y| \le 1$, and $\chi(y)=0$ for $|y|\ge 2$. An application of Lemma \ref{HS} gives
\begin{equation} \label{4.6}
f_{\eta}(H)= \frac{1}{\pi}\int_{\bC}\frac{\partial_{\bar{z}}(\tilde{f}_{\eta}(z)\chi(z/\sigma))}{H-z}\,\mathrm{d}^2z \\
=\int_{\bC}\varphi_f(z)G(z)\,\mathrm{d}^2z\,,
\end{equation}
where
\begin{equation} \label{4.7}
\begin{aligned}
\varphi_f(x+\mathrm{i}y)=&\ \frac{1}{2\pi}\Big((\mathrm{i}-1)\big(f^{\prime}_{\eta}(x+y)-f^{\prime}_{\eta}(x)\big)\chi(y/\sigma)-\dfrac{1}{\sigma}\big(f_{\eta}(x+y)-f_{\eta}(x)\big)\chi^{\prime}(y/\sigma)\\&+\frac{\mathrm{i}}{\sigma}f_{\eta}(x)\chi^{\prime}(y/\sigma) \Big)\,.
\end{aligned}
\end{equation}
Thus
\begin{equation} \label{F}
\bE \langle \tr f_{\eta}(H) \rangle^n=N^n \int\varphi_f(z_1)\cdots \varphi_f(z_n) \bE \langle \ul{\prescript{1}{G}} \rangle \cdots \langle \ul{\prescript{n}{G}} \rangle\ \mathrm{d}^2z_1\cdots \mathrm{d}^2z_n\eqd \int F\,,
\end{equation}
where $\prescript{k}{G}\deq (H-z_k)^{-1}$ for $i \in \{1,2,\dots,n\}$. Note that $\chi(y/\sigma)\equiv0$ for $|y|\ge 2\sigma$, and we only need to consider the integral for $|y_1|,\dots,|y_n| \le 2\sigma$.

\subsection{The subleading terms} \label{subsec4.2}
Let $\omega\deq N^{-(\alpha+\gamma)}$ with $\gamma\deq 4n\beta/rs$, and by (\ref{beta}) we have $\alpha+\beta<\alpha+\gamma <1$. We define $X\deq \{|x_1|,\dots,|x_n|\le 2-\frac{\kappa}{2}\}$ and $Y\deq \{|y_1|,\dots,|y_n| \in [\omega, 2\sigma]\}$, where we recall the definition \eqref{def_kappa} of $\kappa$. We have a lemma about $\int F$ outside the region $X\times Y$.
\begin{lemma} \label{lemF}
For $F$ as in (\ref{F}) we have
	\begin{equation}
	\int_{(X\times Y)^c} F=O(N^{-\beta/2})\,.
	\end{equation}
\end{lemma}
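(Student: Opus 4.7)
The plan is to split $(X\times Y)^c$ into two subregions (exploiting the symmetry of $F$ under permutation of $z_1,\ldots,z_n$): \textbf{Case A}, $\{|y_1|<\omega\}$ (small imaginary part), and \textbf{Case B}, $\{|x_1|>2-\kappa/2\}$ (outside the bulk in $x$). Since $\varphi_f(z_k)\equiv 0$ for $|y_k|>2\sigma$, the remaining variables are effectively confined to $|y_k|\le 2\sigma$. The first step is to combine the local semicircle law \eqref{3.4} with Lemma \ref{mainlem} (ii) to get $|\langle\ul{G(z)}\rangle|\prec 1/(N|y|)$; then by Hölder's inequality and Lemma \ref{prop_prec} (ii),
\begin{equation*}
\biggl|\bE\prod_{k=1}^n\langle\ul{\prescript{k}{G}}\rangle\biggr|\prec \prod_{k=1}^n\frac{1}{N|y_k|}\,,
\end{equation*}
so that after cancellation of the prefactor $N^n$ it suffices to estimate $\int\prod_k|\varphi_f(z_k)|/|y_k|\,d^2z_1\cdots d^2z_n$ over the bad region.

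For the test function, inspection of \eqref{4.7} shows that for $|y|\le\sigma$ only the first term of $\varphi_f$ contributes (since $\chi'(y/\sigma)=0$), so the $r$-Hölder continuity of $f'$ gives $|\varphi_f(x+\ii y)|\le C|y|^r/\eta^{r+1}$; for $\sigma<|y|\le 2\sigma$, combining the three terms and using $\sigma\ll\eta$ yields $|\varphi_f|\le C/\sigma$. Interpolating the Hölder bound against the decay $|f|+|f'|=O((1+|x|)^{-1-s})$ produces integrability in $x$: for $|y|\le \sigma$ one obtains $\int_\R|\varphi_f(x+\ii y)|\,dx\le C(|y|/\eta)^{rs/(1+s)}$, while for $\sigma<|y|\le 2\sigma$ the decay of $f$ alone gives $\int_\R|\varphi_f|\,dx\le C\eta/\sigma$. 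Consequently, for each $k\ge 2$,
\begin{equation*}
\int\frac{|\varphi_f(z_k)|}{|y_k|}\,d^2z_k\le C\frac{\eta}{\sigma}=CN^{\beta}\,,
\end{equation*}
whereas the decay bound restricted to $|x-E|>\kappa/2$ yields the smaller estimate $\int_{|x|>2-\kappa/2}\int|\varphi_f(z_1)|/|y_1|\,d^2z_1\le C(\eta/\kappa)^s\log N = O(N^{-\alpha s+o(1)})$.

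In Case A, since $\omega\le\sigma$, the Hölder estimate for $\varphi_f(z_1)$ is available throughout, and combining with the $x$-integrable bound gives $\int_{|y_1|<\omega}\int|\varphi_f(z_1)|/|y_1|\,d^2z_1\le C(\omega/\eta)^{rs/(1+s)}$. Multiplying by the $CN^{(n-1)\beta}$ factor from the other variables, and inserting $\omega=N^{-(\alpha+\gamma)}$, $\gamma=4n\beta/(rs)$, and $s\le 1$, we obtain
\begin{equation*}
\int_A|F|\le CN^{-rs\gamma/(1+s)+(n-1)\beta+\ee}\le CN^{-2n\beta+(n-1)\beta+\ee}\le CN^{-\beta/2}
\end{equation*}
for any small $\ee>0$. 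In Case B, the decay estimate from Step 2 and the $CN^{(n-1)\beta}$ factor from the other variables give a bound $\le CN^{-\alpha s+(n-1)\beta+\ee}$; this is again $\le CN^{-\beta/2}$ by \eqref{beta} and $c_0\le\alpha/3$, completing the estimate.

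The main obstacle is Case A, in which the divergence of $1/|y_1|$ from the expectation bound must be absorbed by the $|y_1|^r$ smallness of $\varphi_f$. This absorption is only possible because $\omega\ll\sigma$ ensures that $\chi'(y_1/\sigma)$ vanishes in the bad region, leaving only the Hölder-controlled first term of $\varphi_f$; and the interpolation between the Hölder bound and the decay of $f'$ is needed to simultaneously gain $|y_1|^r$ smallness in $y_1$ and integrability in $x_1$. The quantitative choice \eqref{beta} of $\beta$ (equivalently of $\gamma$) is calibrated precisely to make the $(\omega/\eta)^{rs/(1+s)}$ gain outweigh the $(\eta/\sigma)^{n-1}$ cost from the other $n-1$ integrations.
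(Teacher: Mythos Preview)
Your proof is correct and follows essentially the same strategy as the paper: decompose $(X\times Y)^c$ into the small-$|y|$ region and the large-$|x|$ region, bound the expectation by $\prod_k 1/(N|y_k|)$ via the local semicircle law, and then control $\int |\varphi_f(z)|/|y|\,d^2z$ by interpolating the H\"older bound on $f'$ against its decay. Two minor remarks: your citation of Lemma~\ref{mainlem}~(ii) is unnecessary (the bound $|\langle\ul{G(z)}\rangle|\prec 1/(N|y|)$ follows directly from \eqref{3.4}, \eqref{outside} and Lemma~\ref{prop_prec}, as the paper does in \eqref{4.9}); and your interpolation $\int_\R|\varphi_f|\,dx\le C(|y|/\eta)^{rs/(1+s)}$, obtained by optimizing the $\min$ directly, is actually sharper than the paper's geometric-mean interpolation with exponent $q_0=s/(2(1+s))$, though this makes no difference for the final estimate.
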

\begin{proof}
	We first estimate
$\int_{\R^n \times Y^c} F$.
By the estimates (\ref{3.4}) and (\ref{outside}) we know
\begin{equation} \label{4.9}
\big|\langle \ul{\prescript{1}{G}} \rangle \cdots \langle \ul{\prescript{n}{G}} \rangle\big| \prec \frac{1}{|y_1\cdots y_n|N^n}
\end{equation}
uniformly in $\{|y_1|,\dots,|y_n| \le 2\sigma\}$. Since $\chi^{\prime}(y/\sigma)=0$ for $|y| <\sigma$, we have
\begin{equation} \label{4.10}
\begin{aligned}
\int_{|y|<\omega}\Big|\varphi_f(z)\cdot\frac{1}{y}\Big|\,\mathrm{d}^2z=&\,O(1)\cdot\int_{|y|<\omega}\Big|(f_{\eta}^{\prime}(x+y)-f^{\prime}_{\eta}(x))\cdot\frac{1}{y}\Big|\,\mathrm{d}x\,\mathrm{d}y\\=&\,O(1)\cdot\int_{|b|<N^{\beta-\gamma}}\Big|(f^{\prime}(a+bN^{-\beta})-f^{\prime}(a))\cdot\frac{1}{b}\Big|\,\mathrm{d}a\,\mathrm{d}b\,,
\end{aligned}
\end{equation}
where in the second step we used the change of variables 
\begin{equation} \label{change}
a\deq (x-E)/\eta\  \mbox{  and  }\  b\deq y/\sigma\,.
\end{equation}
By the H\"{o}lder continuity and decay of the function $f^{\prime}$, we know
\begin{equation} \label{4.11}
\begin{aligned}
\big|f^{\prime}(a+bN^{-\beta})-f^{\prime}(a)\big| \le&\, C\min\bigg\{(|b|N^{-\beta})^r,\frac{1}{1+|a|^{1+s}}\bigg\}\\ \le&\, C(|b|N^{-\beta})^{rq}  \bigg(\frac{1}{1+|a|^{1+s}}\bigg)^{1-q} 
\end{aligned}
\end{equation} 
for all $q \in [0,1]$. Choose $q=q_0(s)\deq \frac{s}{2(1+s)}\ge\frac{s}{4}$, so that $(1+s)(1-q_0)=1+\frac{s}{2}>1$. Thus we have
\begin{equation} \label{4.12}
	\int_{|y|<\omega}\bigg|\varphi_f(z)\cdot\frac{1}{y}\bigg|\,\mathrm{d}^2z=O(N^{- rq_0\beta})\cdot \int_{|b|<N^{\beta-\gamma}}|b|^{rq_0-1}  \frac{1}{1+|a|^{1+s/2}} \,\mathrm{d}a \, \mathrm{d}b=O(N^{-rq_0\gamma })\,.
\end{equation}
Similarly, one can show that
\begin{equation} \label{4.13}
	\int_{|y|\in [\omega,\sigma)}\bigg|\varphi_f(z)\cdot\frac{1}{y}\bigg|\,\mathrm{d}^2z=O(N^{- rq_0\beta})\,.
\end{equation}
We also have
\begin{equation} \label{phipsi}
	\int_{|y| \ge \sigma} \bigg|\varphi_f(z)\cdot\frac{1}{y}\bigg|\,\mathrm{d}^2z=N^{\beta}\int_{1\le |b|\le 2}\bigg|\psi_f(a,b)\cdot \frac{1}{b}\bigg|\,\mathrm{d}a\,\mathrm{d}b=O(N^{\beta})\,,
\end{equation}
where we used the change of variables \eqref{change}, and abbreviate
\begin{equation} \label{psi}
\begin{aligned}
\psi_f(a,b)\deq&\,\frac{1}{2\pi}\Big(N^{-\beta}(\mathrm{i}-1)\big(f^{\prime}(a+bN^{-\beta})-f^{\prime}(a)\big)\chi(b)-\big(f(a+bN^{-\beta})-f(a)\big)\chi^{\prime}(b)\\&+\mathrm{i}f(a)\chi^{\prime}(b) \Big)\,.
\end{aligned}
\end{equation}
Using lemma \ref{prop_prec} we have
\begin{equation} \label{519}
	\begin{aligned}
		\bigg|\int_{\bR^n \times Y^c} F\,\bigg| \le&\ n\, \int_{|y_1|< \omega} \big|F\big| \prec \int_{|y_1|<\omega}\bigg|\varphi_f(z_1)\cdot\frac{1}{y_1}\cdots \varphi_f(z_n)\cdot\frac{1}{y_n}\bigg| \ \mathrm{d}^2z_1\cdots \mathrm{d}^2z_n\\=&\ \int_{|y_1|<\omega}\bigg|\varphi_f(z_1)\cdot\frac{1}{y_1}\bigg| \ \mathrm{d}z_1 \,\int\bigg|\varphi_f(z_2)\cdot\frac{1}{y_1}\cdots \varphi_f(z_n)\cdot\frac{1}{y_n}\bigg| \ \mathrm{d}^2z_2\cdots \mathrm{d}^2z_n\\=&\ O\big(N^{- rq_0\gamma}\cdot N^{(n-1)\beta}\big) \le O\big(N^{- rs\gamma/4}\cdot N^{(n-1)\beta}\big) \le O\big(N^{-\beta}\big)\,.
	\end{aligned}
\end{equation}

Next, we estimate $\int_{X^c \times Y} F$. By the decay of the functions $f$ and $f^{\prime}$, we have
\begin{equation}
\begin{aligned}
&\int_{|x|>2-\frac{\kappa}{2}} \bigg|\varphi_f(z)\cdot\frac{1}{y}\bigg| \, \mathrm{d}^2z \le N^{\beta}\int_{|a|\ge \frac{\kappa}{2\eta}}\bigg|\psi_f(a,b)\cdot \frac{1}{b}\bigg|\,\mathrm{d}a\,\mathrm{d}b\\
=&\,O(N^{\beta}) \cdot \Bigg( \int_{|a|\ge \frac{\kappa}{2\eta}} \frac{1}{1+|a|^{1+s/2}}\, \mathrm{d}a+\int_{|a|\ge \frac{\kappa}{2\eta}} \frac{1}{1+|a|^{1+s}} \,\mathrm{d}a+\int_{|a|\ge \frac{\kappa}{2\eta}} |f(a)|\, \mathrm{d}a \Bigg)\\
=&\,O(N^{\beta-s\alpha/2})\,.
\end{aligned}
\end{equation}
where $a,b$ are defined as in (\ref{change}). Hence
\begin{equation} \label{4.17}
\begin{aligned}
	\bigg|\int_{X^c \times Y} F\bigg| \prec&\ \int_{ \{|x_1|>2-\frac{\kappa}{2}\}\times Y}\bigg|\varphi_f(z_1)\cdot\frac{1}{y_1}\cdots \varphi_f(z_n)\cdot\frac{1}{y_n}\bigg| \, \mathrm{d}^2z_1\cdots \mathrm{d}^2z_n\\
	=&\ O\big(N^{(n-1)\beta}\big) \cdot \int_{\{|x_1|>2-\frac{\kappa}{2},\,|y_1|\ge \omega\}} \bigg|\varphi_f(z_1)\cdot\frac{1}{y_1}\bigg| \, \mathrm{d}^2z_1\\
	=&\ O\big(N^{n\beta-s\alpha/2})\le O(N^{-\beta}\big)\,.
\end{aligned}
\end{equation}
Combining \eqref{519} and \eqref{4.17}, we get 
\begin{equation*} \label{4.16}
\int F= \int_{X\times Y} F +O(N^{-\beta/2})\,. \qedhere
\end{equation*}
\end{proof}
\subsection{The main computation} \label{subsec4.3}
Now let us focus on the integral $\int_{X\times Y} F$.
Note that now we are in the ``good'' region where it is effective to apply the cumulant expansion to the resolvent. 
In $X\cap Y$, we want to compute the quantity $\bE \langle \ul{\prescript{1}{G}} \rangle \cdots \langle \ul{\prescript{n}{G}} \rangle$. Note that this is very close to the expression we had in (\ref{12313132344324}). Let us abbreviate 
$$
Q_m\deq \langle \ul{\prescript{1}{G}} \rangle \cdots \langle \ul{\prescript{m}{G}} \rangle\ \mbox{ and }\ Q_m^{(k)}\deq Q_m/\langle \ul{\prescript{k}{G}} \rangle 
$$
 for all $1 \le k \le m \le n$, and $\zeta_i\deq \bE |\sqrt{N}H_{ii}|^2$. We proceed the computation as in Section \ref{sec:3.2}, and get an analogue of (\ref{3.18}):
\begin{equation} \label{4.20}
\begin{aligned}
\bE Q_n=&\ \frac{1}{T_n} \bE Q_n \langle \ul{\prescript{n}{G}} \rangle- \frac{1}{T_n}\bE Q_{n-1} \bE \langle \ul{\prescript{n}{G}} \rangle^2+\frac{1}{NT_n}\bE Q_{n-1} \langle \ul{\prescript{n}{G}^2} \rangle\\& -\frac{\tilde{K}}{T_n}-\frac{\tilde{L}}{T_n}
+\frac{2}{N^2T_n}\sum\limits_{k=1}^{n-1} \bE Q_{n-1}^{(k)} \ul{\prescript{k}{G}^2 \cdot \prescript{n}{G}}\,,
\end{aligned}
\end{equation}
where $T_n\deq -z_n-2\bE \underline{\prescript{n}{G}}$,
\begin{equation*} 
\tilde{K}=N^{-2} \sum\limits_{i} \bE\frac{\partial \big(\langle \langle \ul{\prescript{1}{G}} \rangle \cdots \langle \ul{\prescript{n-1}{G}} \rangle \rangle\cdot\prescript{n}{G}_{ii}\big)}{\partial H_{ii}}(\zeta_i-2)\,, 
\end{equation*}
and
\begin{equation*}  
\tilde{L}=N^{-1} \sum\limits_{i,j}\left[\sum\limits_{k=2}^l\frac{1}{k!}\mathcal{C}_{l+1}(H_{ji})\bE\frac{\partial^k\big(\langle\langle \ul{\prescript{1}{G}} \rangle \cdots \langle \ul{\prescript{n-1}{G}} \rangle\rangle\cdot\prescript{n}{G}_{ij}\big)}{\partial {H_{ji}}^k}+\tilde{R}_{l+1}^{(ji)}\right]\,.  
\end{equation*}
Here $\tilde{R}_{l+1}^{(ji)}$ is a remainder term defined analogously to $R_{l+1}^{(ji)}$ in (\ref{tau}). Note in $Y$, we have $|y_1|,\dots,|y_n| \ge \omega$, and we have estimates analogue to those in Section \ref{sec:3.2}. We state these estimates in the next lemma and omit the proof.
\begin{lemma} \label{analogue}
	Let us extend the definition of $c_0$ in \eqref{c_0} to a function $c_0(\cdot): (0,1) \to \bR$ such that
	\begin{equation} \label{functionc_0}
	c_0(x)\deq \frac{1}{3}\min\{x,1-x\}\,.
	\end{equation}
	The following results hold uniformly in $X\times Y$.
	\begin{enumerate}
		\item Analogously to Lemma \ref{prop4.4}, for any $m \in \bN_{+}$ and $k=1,2,\dots,n$, we have
		\begin{equation} \label{3}
		 \big|\langle \ul{\prescript{k}{G}^m} \rangle\big| \prec N^{m(\alpha+\gamma)-1}
		\end{equation}
		as well as
		\begin{equation} \label{4}
		\big|\big(\prescript{k}{G}^m\big)_{ij}\big|\prec
		\begin{cases}
		N^{(m-1)(\alpha+\gamma)} & \txt{if } i = j
		\\
		N^{(m-1/2)(\alpha+\gamma)-1/2} & \txt{if } i \neq j\,.
		\end{cases}
	\end{equation}
		\item Analogously to \eqref{T}, we have
		\begin{equation} \label{Tn}
		\Big|\frac{1}{T_n}\Big|=O(1)\,.
		\end{equation}
		\item Analogously to Lemma \ref{termL}, we have
		\begin{equation} \label{tildeL}
		\big|\tilde{L}\big|=O(N^{n(\alpha+\gamma-1)-c_0(\alpha+\gamma)})\,.
		\end{equation}
		\item Analogously to Lemma \ref{lem4.7}, we have
		\begin{equation} \label{tildeK}
		\big|\tilde{K}\big|=O(N^{n(\alpha+\gamma-1)-(\alpha+\gamma)/2})\,.
		\end{equation}
		\item Analogously to Lemma \ref{lem3.11}, we have
		\begin{equation}
		\bE \ul{\prescript{k}{G}^2}=O( N^{\alpha+\gamma-c_0(\alpha+\gamma)})
		\end{equation}
		for $k=1,2,\dots,n$.
	\end{enumerate} 
\end{lemma}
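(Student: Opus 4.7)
The plan is to reduce each of the five assertions to its analogue in Section \ref{sec3} by exploiting the fact that on $X\times Y$ every spectral parameter $z_k = x_k + \ii y_k$ lies in the bulk ($|x_k|\le 2-\kappa/2$) with imaginary part at least $\omega = N^{-(\alpha+\gamma)}$. Since by construction $\alpha+\gamma<1$, the scale $\omega$ plays the role of a valid mesoscopic scale, and the entire machinery of Section \ref{sec3} applies with $\alpha$ replaced by $\alpha+\gamma$; this is precisely why the function $c_0(\cdot)$ was introduced in \eqref{functionc_0}.

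For part (i), I would repeat the Helffer--Sj\"ostrand argument from the proof of Lemma \ref{prop4.4}, now applied at spectral scale $\omega$ instead of $\eta$. The uniform high-probability bound $|\langle \ul{G(x+\ii y')}\rangle| \prec 1/(N|y'|)$ following from \eqref{3.4} and \eqref{outside} is the same one used there, and it holds simultaneously for all relevant $(x,y')$, so it applies to every $z_k$. The entrywise bound \eqref{4} is obtained exactly as \eqref{410}: for $m=1$ it follows from \eqref{3.3} together with $|m(z_k)|<1$, and for $m\ge 2$ by differentiation and a further cumulant expansion.

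For part (ii), the key identity is $-z_n-2m(z_n) = m(z_n)^{-1}-m(z_n)$ coming from \eqref{2.6}. In the bulk $|x_n|\le 2-\kappa/2$, both $|m(z_n)|$ and $\im m(z_n)$ are bounded above and below by constants depending only on $\kappa$, uniformly for $y_n \in [\omega, 2\sigma]$; a direct calculation gives $\im(m^{-1}-m) = -\im m \,(1+|m|^{-2})$, which is bounded away from zero. Combined with the local law $\bE \ul{\prescript{n}{G}} = m(z_n) + O(N^{\alpha+\gamma-1})$, this yields $|T_n|^{-1} = O(1)$ uniformly in $z_n$. Parts (iii)--(v) are then obtained by replaying the proofs of Lemmas \ref{termL}, \ref{lem4.7}, and \ref{lem3.11} respectively, substituting the bounds from part (i) wherever Lemma \ref{prop4.4} was invoked and tracking the exponents with $\alpha$ replaced by $\alpha+\gamma$; the remainder estimate of Lemma \ref{lem:3.1} transfers unchanged, since its $\eta$-dependence is only polynomial and $\omega \ge N^{-1+c}$ for some $c>0$.

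The main obstacle I anticipate is the book-keeping of uniformity of all implicit constants over the $n$-fold region $X\times Y$. Uniformity of the local semicircle law on $\bf{S}$ is already built into Theorem \ref{refthm1}; uniformity of the stochastic-domination and cumulant-expansion error bounds is automatic once one works at the worst-case scale $\omega$; and uniformity of the crucial bound $|T_n|^{-1}=O(1)$ is precisely what forces the bulk restriction $X$ (without it, this bound would degrade as $z_n$ approaches the edge). With these uniformity checks in hand, every step of Section \ref{sec3} transfers mechanically, and each error exponent is obtained by substituting $\alpha \mapsto \alpha+\gamma$ in the corresponding estimate from that section.
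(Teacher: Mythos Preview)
Your proposal is correct and matches the paper's approach exactly: the paper omits the proof of this lemma, merely remarking that on $Y$ one has $|y_1|,\dots,|y_n|\ge\omega$ and that the estimates are analogues of those in Section~\ref{sec3}. Your reduction---working at the worst-case scale $\omega=N^{-(\alpha+\gamma)}$ and replaying Lemmas~\ref{prop4.4}, \ref{termL}, \ref{lem4.7}, \ref{lem3.11} and the bound \eqref{T} with $\alpha$ replaced by $\alpha+\gamma$, using the bulk restriction $X$ to secure uniformity of $|T_n|^{-1}$---is precisely the intended argument, spelled out in more detail than the paper itself provides.
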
 
Applying Lemma \ref{analogue} to \eqref{4.20} yields
\begin{equation} \label{o}
\bE Q_n=\frac{2}{N^2T_n}\sum\limits_{k=1}^{n-1} \bE Q_{n-1}^{(k)} \ul{\prescript{k}{G}^2 \cdot \prescript{n}{G}}+O(N^{n(\alpha+\gamma-1)-c_0(\alpha+\gamma)})
\end{equation}	
uniformly in $X\times Y$. Note that by the definition of $\beta$ and $\gamma$ we have $\gamma = sc_0(\alpha)/6n<c_0(\alpha)/2$, which gives $c_0(\alpha+\gamma)> 5c_0(\alpha)/6>0$. Since we have the simple estimate
\begin{equation} \label{simple}
\int |\varphi_f(z)| \, \mathrm{d}^2z=O(\eta)\,,
\end{equation}
we know
\begin{equation} \label{hehe}
\begin{aligned}
&\,\int_{X\times Y} F=\ N^n\int_{X\times Y} \varphi_f(z_1)\cdots \varphi_f(z_n)\, \bE Q_{n}\, \mathrm{d}^2z_1\cdots \mathrm{d}^2z_n\\=&\ \frac{2N^{n-2}}{T_n}\sum\limits_{k=1}^{n-1}\int_{X\times Y} \varphi_f(z_1)\cdots \varphi_f(z_n) \bE Q_{n-1}^{(k)} \ul{\prescript{k}{G}^2 \cdot \prescript{n}{G}}\, \mathrm{d}^2z_1\cdots \mathrm{d}^2z_n+O(N^{-2c_0(\alpha)/3})\,,
\end{aligned}
\end{equation}
where in the estimate of the error term we implicitly used $n\gamma =sc_0(\alpha)/6\le c_0(\alpha)/6$. By symmetry, it suffices to fix $k \in \{1,2,\dots,n-1\}$, and consider the integral over $X \times Y$ of 
\begin{equation} \label{430}
F_{kn}\deq  \frac{2N^{n-2}}{T_n}\varphi_f(z_1)\cdots \varphi_f(z_n) \bE Q_{n-1}^{(k)} \ul{\prescript{k}{G}^2 \cdot \prescript{n}{G}}\,.
\end{equation}

As before, we summarize the necessary estimates into a lemma.
\begin{lemma} \label{lem4.6}
Let $	F_{kn}$ be as in (\ref{430}). Then we have the following estimates.
\begin{enumerate}
	\item Let $A_1\deq \{(x,y) \in X\times Y:y_ky_n >0, |x_k-x_n|\le \eta N^{-(n+1)\gamma}\}$. Then
	\begin{equation}
	\bigg|\int_{A_1} F_{kn} \bigg| \prec N^{-\gamma}\,.
	\end{equation}
	\item Let $A_2\deq \{(x,y) \in X\times Y:y_ky_n >0, |x_k-x_n|\in ( \eta N^{-(n+1)\gamma},\eta N^{(n+1)\gamma/s}]\, \}$. Then
	\begin{equation}
	\int_{A_2} F_{kn}=O(N^{-c_0(\alpha)/4})\,,
	\end{equation}
	where the function $c_0(\cdot)$ is defined in (\ref{functionc_0}).
	\item For $A_3\deq \{ (x,y) \in X\times Y:|x_k-x_n| >  \eta N^{(n+1)\gamma/s}\, \}$, we have
	\begin{equation}
\bigg|\int_{A_3} F_{kn} \bigg| \prec	N^{-\gamma}\,.
	\end{equation}
\end{enumerate}
\end{lemma}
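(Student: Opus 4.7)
The plan is to treat the three regions separately, using the resolvent identity
\[
\ul{\prescript{k}{G}^{2}\,\prescript{n}{G}} \;=\; \frac{\ul{\prescript{k}{G}^{2}}}{z_{k}-z_{n}} \;-\; \frac{\ul{\prescript{k}{G}}-\ul{\prescript{n}{G}}}{(z_{k}-z_{n})^{2}}\,,
\]
which isolates the only potential singularity at $z_{k}\to z_{n}$, together with the bounds of Lemma \ref{analogue} and the scalewise integral estimates \eqref{4.12}--\eqref{phipsi} in the rescaled coordinates $a=(x-E)/\eta$, $b=y/\sigma$.

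For (iii), in $A_{3}$ we have $|z_{k}-z_{n}|\ge\eta N^{(n+1)\gamma/s}$, so the resolvent identity combined with the trivial bound $|\ul{\prescript{k}{G}^{2}}|\prec 1/|y_k|\le N^{\alpha+\gamma}$ and the boundedness of $\ul{\prescript{k}{G}},\ul{\prescript{n}{G}}$ yields $|\ul{\prescript{k}{G}^{2}\,\prescript{n}{G}}|\prec N^{2\alpha-\gamma}$, where one uses $s\le 1$ and $n\ge 1$ to check that $N^{\gamma(1-(n+1)/s)}\le N^{-\gamma}$. Inserting this into $F_{kn}$ together with $|\bE Q_{n-1}^{(k)}|\prec N^{(n-2)(\alpha+\gamma-1)}$, the uniform bound $|1/T_{n}|=O(1)$, and integrating the remaining $\varphi_{f}$ factors via $\int|\varphi_{f}(z)|\,\mathrm{d}^{2}z=O(\eta)$ from \eqref{simple}, plus a tail bound on $\int_{|x_k-x_n|>\eta N^{(n+1)\gamma/s}}|\varphi_{f}(z_k)\varphi_{f}(z_n)|\,\mathrm{d}^2z_k\mathrm{d}^2z_n$ obtained from the H\"older decay of $f'$ as in the derivation of \eqref{4.17}, produces the stated $\prec N^{-\gamma}$.

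For (i), in $A_{1}$ we cannot use the resolvent identity since $|z_{k}-z_{n}|$ may be as small as $\omega$; instead we rely on direct operator estimates. The Hilbert--Schmidt identity $\norm{\prescript{k}{G}}_{2}^{2}=N\,\im\ul{\prescript{k}{G}}/y_{k}$ and Cauchy--Schwarz give $|\ul{\prescript{k}{G}^{2}\,\prescript{n}{G}}|\prec|y_{k}|^{-3/2}|y_{n}|^{-1/2}\le N^{2(\alpha+\gamma)}$ on $Y$. Combined with $|\varphi_{f}(z_{k})|=O(1/\eta)$ on $\supp\chi(\,\cdot\,/\sigma)$, the small $x_{k}$-measure $O(\eta N^{-(n+1)\gamma})$, and handling the remaining variables via the template of Lemma \ref{lemF}, the powers balance to give $\prec N^{-\gamma}$. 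For (ii), in $A_{2}$ we apply the resolvent identity and split $\int_{A_{2}}F_{kn}=I_{1}-I_{2}$ accordingly. For $I_{1}$, the refinement $\big|\bE Q_{n-1}^{(k)}\ul{\prescript{k}{G}^{2}}\big|$ is bounded via Lemmas \ref{analogue} and \ref{prop_prec} by $\prec N^{(n-2)(\alpha+\gamma-1)+\alpha+\gamma-c_{0}(\alpha+\gamma)}$, which together with $1/|z_{k}-z_{n}|\le(\eta N^{-(n+1)\gamma})^{-1}$ and the $\varphi_{f}$ integrals is controllable. For $I_{2}$, Theorem \ref{refthm1} replaces $\ul{\prescript{k}{G}}-\ul{\prescript{n}{G}}$ by $m(z_{k})-m(z_{n})$ at cost $\prec 1/(N\omega)=N^{\alpha+\gamma-1}$; the Lipschitz estimate $|m(z_{k})-m(z_{n})|\le C|z_{k}-z_{n}|/\sqrt{\kappa}$ on the bulk cancels one power of the denominator, and the remaining simple pole is integrated against $\varphi_{f}$ using the H\"older regularity of $f'$ with exponent $rq_{0}\ge rs/4$ as in \eqref{4.11}.

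The main obstacle is (ii). Unlike the extreme regions, $A_{2}$ admits no single small parameter, so the argument must simultaneously balance the local-semicircle-law error, the Lipschitz continuity of $m$, the H\"older regularity and decay of $f$, and the expectation bounds from Lemma \ref{analogue}. The exponents $(n+1)\gamma$ and $(n+1)\gamma/s$ defining $A_{2}$ are engineered so that the H\"older gain $N^{-rq_{0}(n+1)\gamma/s}$ outweighs the accumulated polynomial losses, while the choice of $\beta$ in \eqref{beta} is made precisely so that the resulting exponent is $-c_{0}(\alpha)/4$ or better.
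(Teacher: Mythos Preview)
Your proposal is essentially correct and follows the same architecture as the paper: direct operator bounds on $\ul{\prescript{k}{G}^{2}\,\prescript{n}{G}}$ combined with smallness of the integration domain for $A_1$, the resolvent identity for $A_2$, and tail decay of $f$ for $A_3$. There are, however, a few inaccuracies and points where the paper's execution differs from yours.

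First, the pointwise claim $|\varphi_f(z_k)|=O(1/\eta)$ in part (i) is false: the term $\frac{\ii}{\sigma}f_\eta(x)\chi'(y/\sigma)$ in \eqref{4.7} is of size $O(1/\sigma)\gg 1/\eta$. This does not break the argument, since what you actually need is $\int_{|x_k-x_n|\le\eta N^{-(n+1)\gamma}}|\varphi_f(z_k)|\,\dd^2 z_k=O(\eta N^{-(n+1)\gamma})$, which holds after passing to the $(a,b)$ variables and noting $\int|\psi_f|\,\dd a\,\dd b=O(1)$; this is exactly what the paper does in \eqref{well}.

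Second, for part (iii) the paper does \emph{not} invoke the resolvent identity. It uses only the crude bound $|\ul{\prescript{k}{G}^2\,\prescript{n}{G}}|\prec\omega^{-2}$ (giving $|F_{kn}|\prec\omega^{-n}|\varphi_f(z_1)\cdots\varphi_f(z_n)|$) and then exploits the decay of $f$: since $|a_k-a_n|>N^{(n+1)\gamma/s}$ forces $|a_k|$ or $|a_n|$ to exceed $\tfrac12 N^{(n+1)\gamma/s}$, the $(1+s)$-decay of $f$ yields a factor $N^{-(n+1)\gamma}$ in the $(z_k,z_n)$-integral, and $\omega^{-n}\eta^n N^{-(n+1)\gamma}=N^{-\gamma}$. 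Your resolvent-identity improvement to $N^{2\alpha-\gamma}$ is correct but redundant; without the tail bound it gives only $N^{(n-3)\gamma}$, which fails for $n\ge 4$.

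Third, in part (ii) the paper does not integrate a simple pole. It bounds the integrand \emph{uniformly} on $A_2$ by using the \emph{upper} bound $|z_k-z_n|\le C\eta N^{(n+1)\gamma/s}=O(N^{-\alpha/2})$ together with the Lipschitz estimate for $m$ and the analogue of \eqref{3.74}, obtaining $|\bE(\ul{\prescript{n}{G}}-\ul{\prescript{k}{G}})|=O(N^{-c_0(\alpha+\gamma)})$; then $1/|z_k-z_n|^2\le\eta^{-2}N^{2(n+1)\gamma}$ from the lower bound, and the integral over $A_2$ is done trivially via \eqref{simple}. Your Lipschitz-then-integrate route also works (the resulting $1/|z_k-z_n|$ is integrable against $\psi_f$ with at most a $\log N$ loss, and the extra factor $\eta$ from the change of variables more than compensates), but the H\"older regularity of $f'$ plays no role there.
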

\begin{proof}
(i) By Lemma \ref{analogue} (i)-(ii) we have
$$
\bigg|\frac{N^{n-2}}{T_n} \bE Q_{n-1}^{(k)} \ul{\prescript{k}{G}^2 \cdot \prescript{n}{G}}\bigg|\prec \omega^{-n}
$$
uniformly in $A_1$.
Thus by (\ref{simple}) and the decay of $f$ and $f^{\prime}$ we know
\begin{equation} \label{well}
\begin{aligned}
\bigg|\int_{A_1} \tilde{F}_{kn} \bigg| \prec&\  \omega^{-n} \cdot \eta^{n-2} \cdot \int \big|\varphi_f(z_k)\cdot\varphi_f(z_n)\big| \mathbf{1}_{\{|x_k-x_n|\le \eta\cdot N^{-(n+1)\gamma}\}}\, \mathrm{d}^2z_k\,\mathrm{d}^2z_n\\
=&\ O(\omega^{-n} \cdot \eta^{n}) \int \big|\psi_f(a_k,b_k)\cdot\psi_f(a_n,b_n)\big| \mathbf{1}_{\{|a_k-a_n|\le N^{-(n+1)\gamma}\}}\, \mathrm{d}a_k\,\mathrm{d}b_k\,\mathrm{d}a_n\,\mathrm{d}b_n\\
=&\ O(N^{n\gamma}\cdot N^{-(n+1)\gamma})=O(N^{-\gamma})\,,
\end{aligned}
\end{equation}
where we use the change of variables  
\begin{equation} \label{change2}
a_i=(x_i-E)/ \eta \ \mbox{  and  }\  b_i=y_i/ \sigma\,, \ \ i=k,n,
\end{equation} and $\psi_f$ is defined as in \eqref{psi}.

(ii) Note that our assumption (\ref{beta}) on $\beta$ shows $\eta N^{(n+1)\gamma/s} =O(N^{-\alpha/2})$. By the resolvent identity, the semicircle law $(\ref{3.4})$, and Lemma \ref{analogue} we know
\begin{equation} \label{4.27}
\begin{aligned}
&\ \bigg|\frac{N^{n-2}}{T_n} \bE Q_{n-1}^{(k)} \ul{\prescript{k}{G}^2 \cdot \prescript{n}{G}}\bigg|\\=&\ \Bigg|N^{n-2}\bE Q_{n-1}^{(k)} \bigg(\frac{\langle\ul{\prescript{k}{G}^2}\rangle+\bE \ul{\prescript{k}{G}^2}}{T_n(z_k-z_n)}+\frac{\langle \ul{\prescript{n}{G}} \rangle-\langle \ul{\prescript{k}{G}} \rangle+\bE (\ul{\prescript{n}{G}}-\ul{\prescript{k}{G}})}{T_n(z_k-z_n)^2}\bigg)\Bigg|\\
\prec&\ \omega^{-(n-2)}\cdot\bigg(\frac{\omega^{-1}N^{-c_0(\alpha+\gamma)}}{\eta N^{-(n+1)\gamma}}+ \frac{(N\omega)^{-1}+(N\omega)^{-1}+N^{-c_0(\alpha+\gamma)}}{\eta^2 N^{-2(n+1)\gamma}}  \bigg)\\
=&\ O(\eta^{-n}\cdot N^{3n\gamma-c_0(\alpha+\gamma)})\le O(\eta^{-n}\cdot N^{-c_0(\alpha)/3})
\end{aligned}
\end{equation}
uniformly in $A_2$. Hence \eqref{simple} yields
\begin{equation}
\int_{A_2} F_{kn}=O(N^{-c_0(\alpha)/4})\,.
\end{equation}

(iii) Similar as in (\ref{well}), we know
\begin{equation} \label{welll}
\begin{aligned}
\bigg|\int_{A_3} F_{kn}\bigg| \prec&\  \omega^{-n} \cdot \eta^{n-2} \cdot \int \big|\varphi_f(z_k)\cdot\varphi_f(z_n)\big| \mathbf{1}_{\{|x_k-x_n|> \eta N^{(n+1)\gamma/s}\}}\, \mathrm{d}^2z_k\,\mathrm{d}^2z_n\\
=&\ O(\omega^{-n} \cdot \eta^{n}) \int \big|\psi_f(a_k,b_k)\cdot\psi_f(a_n,b_n)\big| \mathbf{1}_{\{|a_k-a_n|> N^{(n+1)\gamma/s}\}}\, \mathrm{d}a_k\,\mathrm{d}b_k\,\mathrm{d}a_n\,\mathrm{d}b_n\,.
\end{aligned}
\end{equation}
Note that in $\{|a_k-a_n|>  N^{(n+1)\gamma/s} \}$, either $|a_k| > \frac{1}{2} N^{(n+1)\gamma/s}$ or $|a_n| >\frac{1}{2}  N^{(n+1)\gamma/s}$. Hence by the decay conditions of $f$ and $f^{\prime}$, we have
\begin{equation*} 
\bigg|\int_{A_3} F_{kn}\,\bigg| \prec \omega^{-n} \cdot \eta^{n} \cdot N^{-(n+1)\gamma}=N^{-\gamma}\,.	\qedhere
\end{equation*}
\end{proof}

Let $A_4\deq \{(x,y)\in (X \times Y):y_k y_n <0, |x_k-x_n| \le  \eta N^{(n+1)\gamma/s}\, \}$. Note that $\bC^n$ is the disjoint union of $A_1,\dots,A_4$. The next result is about the integral of $F_{kn}$ in $A_4$, which gives the leading contribution. 
\begin{lemma} \label{lem:4.7}
	We have
	\begin{equation}
	\int_{A_4} F_{kn}=\frac{1}{2\pi^2}\int \bigg( \frac{f(x)-f(y)}{x-y}\bigg)^2 \,\dd x \, \dd y  \cdot \bE \langle \tr f_{\eta}(H) \rangle^{n-2}+O(N^{-rs\beta/9})\,,
	\end{equation}
	where $\beta$ is defined in (\ref{beta}).
\end{lemma}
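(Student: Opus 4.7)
The plan is: inside $A_4$ (where $\im z_k$ and $\im z_n$ have opposite signs) the two-point covariance $-2/(z_k - z_n)^2$ of normalized resolvent traces --- established in Lemma \ref{lem4.9} --- emerges as the leading deterministic kernel of $F_{kn}$, and a standard Helffer-Sj\"ostrand computation then converts its integral against $\varphi_f(z_k) \varphi_f(z_n)$ into the claimed Sobolev seminorm of $f$.

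Concretely, I first apply the partial fraction identity
\begin{equation*}
\ul{\prescript{k}{G}^2 \cdot \prescript{n}{G}} \;=\; \frac{\ul{\prescript{k}{G}^2}}{z_k - z_n} \;-\; \frac{\ul{\prescript{k}{G}} - \ul{\prescript{n}{G}}}{(z_k - z_n)^2}
\end{equation*}
to the integrand of $F_{kn}$; this is safe in $A_4$ because $|z_k - z_n| \geq 2\omega$. I then replace the random traces by their deterministic leading orders: by Lemma \ref{mainlem}(ii) extended to complex spectral parameters, $\bE \ul{\prescript{j}{G}} = m(z_j) + O(N^{\alpha+\gamma-1-c_0(\alpha+\gamma)})$; by a cumulant expansion for $\ul{G^2}$ analogous to Lemma \ref{lem3.11} combined with the partial fraction decomposition of $\int \varrho(x)/(x-z)^2\,\dd x$, $\bE \ul{\prescript{k}{G}^2}$ matches $m'(z_k)$ up to an error of the same order; and from \eqref{2.6}, $T_n = 1/m(z_n) - m(z_n) + O(\cdot)$. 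The fluctuations paired with $\bE Q_{n-1}^{(k)}$ are controlled by the multivariate moment bounds of Lemma \ref{lem4.9} and contribute negligibly. The leading deterministic integrand becomes
\begin{equation*}
\mathcal K(z_k, z_n) \deq \frac{2}{T_n}\left(\frac{m'(z_k)}{z_k - z_n} - \frac{m(z_k) - m(z_n)}{(z_k - z_n)^2}\right),
\end{equation*}
and the algebraic identification is that, in $A_4$, this $\mathcal K$ equals $-2/(z_k - z_n)^2$ to leading order --- as can be verified directly using the defining equation $m^2 + zm + 1 = 0$ together with $m(\bar z) = \overline{m(z)}$, and which is consistent with the fact that $-2/(N^2(z_k - z_n)^2)$ is precisely the two-point covariance extracted in Lemma \ref{lem4.9} in this regime.

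The remaining step is the standard computation
\begin{equation*}
-2\int \frac{\varphi_f(z_k)\,\varphi_f(z_n)}{(z_k - z_n)^2}\,\dd^2 z_k\,\dd^2 z_n \;=\; \frac{1}{2\pi^2}\int \left(\frac{f(x) - f(y)}{x - y}\right)^2 \dd x\,\dd y,
\end{equation*}
obtained from two applications of the Cauchy-Pompeiu formula $\int \partial_{\bar z}(\tilde f \chi)/(z - w)^2\,\dd^2 z = -\pi\, \partial_w(\tilde f \chi)$. The residual integral $N^{n-2}\int \bE Q_{n-1}^{(k)}\prod_{i \neq k,n}\varphi_f(z_i)\,\dd^2 z_1 \cdots$ reconstructs $\bE \langle \tr f_\eta(H)\rangle^{n-2}$ via the Helffer-Sj\"ostrand formula \eqref{4.6} after extending the integration from $A_4$ back to the full region (the complement being controlled as in Lemma \ref{lemF}).

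The main obstacle is the algebraic reduction of $\mathcal K$ to $-2/(z_k - z_n)^2$ with controllable remainder: the identification uses the defining equation of $m$ in an essential way, and the remainder must be shown to integrate against $\varphi_f(z_k)\varphi_f(z_n)$ to subleading order throughout $A_4$. The secondary difficulty is the error bookkeeping --- replacement errors of size $N^{-c_0(\alpha+\gamma)}$, fluctuation factors of size $N^{\gamma}$, the inflation factor $N^{(n+1)\gamma/s}$ from the definition of $A_4$, and the H\"older loss $q_0 = s/(2(1+s))$ from Lemma \ref{lemF} must combine to the claimed bound $N^{-rs\beta/9}$, which is achieved by the precise choice \eqref{beta} of $\beta$.
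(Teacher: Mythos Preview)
Your overall architecture is right and parallels the paper: extract the deterministic kernel $-2/(z_k-z_n)^2$ from $F_{kn}$ on $A_4$, integrate it against $\varphi_f(z_k)\varphi_f(z_n)$ to produce the $H^{1/2}$ seminorm, and reconstruct $\bE\langle\tr f_\eta(H)\rangle^{n-2}$ from the remaining variables via Lemma~\ref{lemF}. Two points deserve comment.

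First, the extraction is simpler than your route through $\mathcal K$. In $A_4$ one has $|z_k-z_n|\ge 2\omega$ (opposite half-planes), so the term $\ul{\prescript{k}{G}^2}/(z_k-z_n)$ is subleading outright by Lemma~\ref{analogue}~(v) and can be discarded without ever invoking $m'(z_k)$. What remains is $(\bE\ul{\prescript{n}{G}}-\bE\ul{\prescript{k}{G}})/T_n(z_k-z_n)^2$, and since $z_k,z_n$ are close to the same real bulk point with opposite imaginary parts, $\bE\ul{\prescript{n}{G}}-\bE\ul{\prescript{k}{G}}\approx m(z_n)-m(z_k)\approx -2\ii\im m$ while $T_n\approx 1/m(z_n)-m(z_n)\approx 2\ii\im m$ (using $|m|=1$ on the bulk), so the ratio is $-1$ directly. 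No algebraic identity for $\mathcal K$ is needed.

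Second, and more seriously, your proposed evaluation of $-2\int\varphi_f(z_k)\varphi_f(z_n)/(z_k-z_n)^2$ by ``two applications of Cauchy--Pompeiu'' does not work. The formula $\int_\C \partial_{\bar z}g/(z-w)^2\,\dd^2z=-\pi\partial_w g(w)$ requires integration over all of $\C$, but here the integration is confined to opposite half-planes $\{y_ky_n<0\}$. Extending to $\C^2$ is not harmless: the complementary region $\{y_ky_n>0\}$ contains the diagonal $z_k=z_n$, and $|z_k-z_n|^{-2}$ is not locally integrable in two real dimensions, so the full $\C^2$-integral diverges. (A quick check: carrying out your computation formally yields $\tfrac{2}{\pi}\int\partial_z(\tilde f_\eta\chi)\,\partial_{\bar z}(\tilde f_\eta\chi)\,\dd^2z$, whose dominant piece is $\tfrac{1}{2\pi}\int f_\eta^2\int(\chi'(y/\sigma)/\sigma)^2\,\dd y\sim\sigma^{-1}$, divergent as $\sigma\to 0$.) The restriction to $A_4$ is therefore essential, and on that domain Cauchy--Pompeiu does not apply. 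The paper instead changes variables to $(a,b)$, splits $\psi_f=\psi_{f,1}+\psi_{f,2}+\psi_{f,3}$ according to the three pieces of \eqref{4.7}, and handles the resulting integrals $\Psi_{i,j}$ by integration by parts in $a$ (which converts $(a_k-a_n+\ii\epsilon)^{-2}$ to $(a_k-a_n+\ii\epsilon)^{-1}$ or $\log$), showing that all cross terms are $O(N^{-rq_0\beta})$ and only $\Psi_{3,3}$ survives. In $\Psi_{3,3}$ one symmetrizes $-2f(a_k)f(a_n)=(f(a_k)-f(a_n))^2-f(a_k)^2-f(a_n)^2$ and uses $\int_\R(a-a_n+\ii\epsilon)^{-2}\,\dd a=0$ to obtain the claimed limit. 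This explicit computation, not Cauchy--Pompeiu, is where the $H^{1/2}$ seminorm actually appears.
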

\begin{proof}
Step 1. By symmetry, let us consider $A_5\deq \{(x,y)\in A_4:y_k \ge \omega ,y_n \le -\omega, |x_k-x_n| \le  \eta N^{(n+1)\gamma/s}\, \}$. Similar as in (\ref{4.27}), we have
\begin{equation}
\frac{N^{n-2}}{T_n} \bE Q_{n-1}^{(k)} \ul{\prescript{k}{G}^2 \cdot \prescript{n}{G}}=\ N^{n-2}\bE Q_{n-1}^{(k)}\frac{\bE \ul{\prescript{n}{G}}-\bE \ul{\prescript{k}{G}}}{T_n(z_k-z_n)^2}+O(\eta^{-n}\cdot N^{-c_0(\alpha)/3})
\end{equation}
uniformly in $A_5$. Note the semicircle law (\ref{3.4}) now gives 
\begin{equation}
\frac{\bE \ul{\prescript{n}{G}}-\bE \ul{\prescript{k}{G}}}{T_n}=\frac{\bE \ul{\prescript{n}{G}}-\bE \ul{\prescript{k}{G}}}{-z_n-2\bE \ul{\prescript{n}{G}}}=-1+O(N^{-c_0(\alpha+\gamma)})
\end{equation}
uniformly in $A_5$. By \eqref{simple} we know
\begin{equation*}
\begin{aligned}
\int_{A_5} F_{kn}=&\ N^{n-2} \int_{A_5} \varphi_f(z_1)\cdots \varphi_f(z_n) \bE Q_{n-1}^{(k)}\frac{-2}{(z_k-z_n)^2}\, \mathrm{d}^2z_1\cdots \mathrm{d}^2z_n+O(N^{-c_0(\alpha)/3})
\\=&\ -2\int_{{A}^{'}_5} \frac{1}{(z_k-z_n)^2} \varphi_f(z_k) \varphi_f(z_n)\, \mathrm{d}^2z_k\, \mathrm{d}^2z_n \cdot \int_{A_5^{''}} \hat{F}_{kn}+O(N^{-c_0(\alpha)/3})\,,
\end{aligned}
\end{equation*}
where we decompose $A_5=A_5^{'}\times A_5^{''}$, with $A^{'}_5$ depends on $(x_k,y_k,x_n,y_n)$. Here $ \hat{F}_{kn}$ is defined as $$ \hat{F}_{kn}\deq N^{n-2}\varphi_f(z_1)\cdots  \varphi_f(z_{n-1})/\varphi_f(z_k)\ \bE Q_{n-1}^{(k)}\,.$$
Let $X^{(k,n)}\deq \{|x_1|,\dots,|x_{k-1}|,|x_{k+1}|,\dots,|x_{n-1}| \le 2-\frac{\kappa}{2}\}$, and $Y^{(k,n)}\deq\{|y_1|,\dots,|y_{k-1}|,|y_{k+1}|,\dots,|y_{n-1}| \ge \omega\}$, and note that $A_5^{''}=X^{(k,n)} \times Y^{(k,n)}$. Applying Lemma \ref{lemF} with $n$ replaced by $n-2$, we get
\begin{equation}
\int_{A_5^{''}} \hat{F}_{kn}= \bE \langle \tr f_{\eta}(H) \rangle^{n-2}+O(N^{-\beta/2})\,.
\end{equation}
By the decay conditions of $f$ and $f^{\prime}$, 
\begin{equation*}
\begin{aligned}
&\,\int_{{A}^{'}_5} \frac{1}{(z_k-z_n)^2} \varphi_f(z_k) \varphi_f(z_n)\, \dd^2 z_k \, \dd^2 z_n\\
=\, & \int \frac{1}{(z_k-z_n)^2} \varphi_f(z_k) \varphi_f(z_n)\mathbf{1}_{\{y_k \ge \omega,\, y_n \le -\omega\}}\, \dd^2 z_k \, \dd^2 z_n+O(N^{-\gamma})\\
=\, & \int \frac{\psi_f(a_k,b_k)\psi_f(a_n,b_n)}{(a_k-a_n+\mathrm{i}\,(b_k-b_n)N^{-\beta})^2}\mathbf{1}_{\{b_k \ge N^{\beta-\gamma},\, b_n \le -N^{\beta-\gamma}\}}\, \dd^2 z_k \, \dd^2 z_n +O(N^{-\gamma})\\
\eqd &\int \Psi+O(N^{-\gamma})\,,
\end{aligned}
\end{equation*}
where in the second last step we use the change of variables in (\ref{change2}), and $\psi_f$ is as in \eqref{psi}. Note that one can repeat the steps in the proof of Lemma \ref{prop4.4} for any $f \in C^{1,r,s}(\bR)$ instead of $f(x) = \p{\frac{x + \ii}{x^2 + 1}}^k$, and get
\begin{equation} \label{f_eta}
\big| \langle f_{\eta}(H) \rangle \big| \prec 1.
\end{equation}
Together with Lemma \ref{prop_prec} we know
\begin{equation} \label{Psi}
\int_{A_5} F_{kn}=-2\,\bE \langle \tr f_{\eta}(H) \rangle^{n-2}\int \Psi+\bigg|\int \Psi\ \bigg| \cdot O\big(N^{-\beta/2}\big)+O(N^{-\gamma/2})\,.
\end{equation} 
Step 2. We now compute $\int \Psi$. Let us set
\begin{equation}
\begin{aligned}
&\psi_{f,1}(a,b)\deq \frac{\mathrm{i}-1}{2\pi} N^{-\beta}\big(f^{\prime}(a+bN^{-\beta})-f^{\prime}(a)\big)\chi(b)\,,\\   \psi_{f,2}(a,b)\deq &-\frac{1}{2\pi}\big(f(a+bN^{-\beta})-f(a)\big)\chi^{\prime}(b)\,,\ \mbox{ and }\ \psi_{f,3}(a,b)\deq \frac{\mathrm{i}}{2\pi}f(a)\chi^{\prime}(b)\,,
\end{aligned}
\end{equation}
which gives
$\psi_f(a,b)=\psi_{f,1}(a,b)+\psi_{f,2}(a,b)+\psi_{f,3}(a,b)$. Let 
$$
\int \Psi_{i,j}\deq \int \frac{\psi_{f,i}(a_k,b_k)\psi_{f,j}(a_n,b_n)}{(a_k-a_n+\mathrm{i}\,(b_k-b_n)N^{-\beta})^2}\mathbf{1}_{\{b_k \ge N^{\beta-\gamma},\, b_n \le -N^{\beta-\gamma}\}} \, \dd a_k \, \dd b_k \, \dd a_n \, \dd b_n\,,
$$
with $ 1 \le i,j \le 3$. We will calculate $\int \Psi$ by calculating $6$ different integrals $\int \Psi_{i,j}$, subject to symmetry.
\vspace{1mm}

Let us first consider $\int \Psi_{1,1}$. Note that by (\ref{4.11}),
\begin{equation} \label{5.14}
\big|f^{\prime}(a+bN^{-\beta})-f^{\prime}(a)\big| \le C(|b|N^{-\beta})^{rq_0}  \bigg(\frac{1}{1+|a|^{1+s/2}}\bigg)\,,
\end{equation}
where $q_0=q_0(s)=\frac{s}{2(s+1)}\ge \frac{s}{4}>0$. Thus
\begin{equation*}
\begin{aligned}
\int \Psi_{1,1} \le &\ C \int \frac{1}{|b_kb_n|N^{-2\beta}}N^{-2\beta-2rq_0\beta}|b_kb_n|^{rq_0}\chi(b_k)\chi(b_n)\mathbf{1}_{\{b_k \ge N^{\beta-\gamma},\, b_n \le -N^{\beta-\gamma}\}}\, \mathrm{d}b_k\,\mathrm{d}b_n\\
=&\ O(N^{-2rq_0\beta})\,.
\end{aligned}
\end{equation*}

Now we consider $\int \Psi_{2,1}$.  Note that $\chi^{\prime}(b)=0$ for $|b|< 1$. Using integration by parts on the variable $a_k$, we know 
\begin{equation*}
\begin{aligned}
\int \Psi_{2,1}=&\ \int \frac{\psi_{f,2}(a_k,b_k)\psi_{f,1}(a_n,b_n)}{(a_k-a_n+\mathrm{i}\,(b_k-b_n)N^{-\beta})^2}\mathbf{1}_{\{b_k \ge 1,\, b_n \le -N^{\beta-\gamma}\}}\, \dd a_k \, \dd b_k \, \dd a_n \, \dd b_n\\
=&\ \int \frac{\tilde{\psi}_{f,2}(a_k,b_k)\psi_{f,1}(a_n,b_n)}{a_k-a_n+\mathrm{i}\,(b_k-b_n)N^{-\beta}}\mathbf{1}_{\{b_k \ge 1,\, b_n \le -N^{\beta-\gamma}\}}\, \dd a_k \, \dd b_k \, \dd a_n \, \dd b_n\,,
\end{aligned}
\end{equation*} 
where $\tilde{\psi}_{f,2}(a,b)\deq -\frac{1}{2\pi}\big(f^{\prime}(a+bN^{-\beta})-f^{\prime}(a)\big)\chi^{\prime}(b)$. Then by \eqref{5.14} we know
\begin{equation}
\int \Psi_{2,1}=O(N^{\beta}\cdot N^{-rq_0\beta} \cdot N^{-\beta-rq_0\beta})=O(N^{-2rq_0\beta})\,.
\end{equation}
Similarly,
$\int \Psi_{3,1}=O(N^{-rq_0\beta})$.
\vspace{1mm}

Now we move to $\int \Psi_{2,2}$. Using integration by parts on the variables $a_k$ and $a_n$, we know
\begin{equation*}
\begin{aligned}
\int \Psi_{2,2}=&\,\int \log\big(a_k-a_n+\mathrm{i}\,(b_k-b_n)N^{-\beta}\big)\tilde{\psi}_{f,2}(a_k,b_k)\tilde{\psi}_{f,2}(a_n,b_n)\mathbf{1}_{\{b_k \ge 1,\, b_n \le -1\}}\, \dd a_k \, \dd b_k \, \dd a_n \, \dd b_n\\
=&\,O\big(\log{N}\cdot N^{-2rq_0\beta}\big)=O(N^{-rq_0\beta})\,.
\end{aligned}
\end{equation*}
Similarly, $\int \Psi_{3,2}=O(N^{-rq_0\beta/2})\le O(N^{-rs\beta/8})$.
\vspace{1mm}

The leading contribution comes from $\Psi_{3,3}$. Note that
\begin{equation*}
\begin{aligned}
\int \Psi_{3,3}&=-\frac{1}{4\pi^2} \int \frac{f(a_k)f(a_n)}{(a_k-a_n+\mathrm{i}\,(b_k-b_n)N^{-\beta})^2}\chi^{\prime}(b_k)\chi^{\prime}(b_n)\mathbf{1}_{\{b_k \ge 1,\, b_n \le -1\}}\, \dd a_k \, \dd b_k \, \dd a_n \, \dd b_n\\&=\frac{1}{8\pi^2}\int \bigg( \frac{f(a_k)-f(a_n)}{(a_k-a_n+\mathrm{i}\,(b_k-b_n)N^{-\beta})}\bigg)^2\chi^{\prime}(b_k)\chi^{\prime}(b_n)\mathbf{1}_{\{b_k \ge 1,\, b_n \le -1\}}\, \dd a_k \, \dd b_k \, \dd a_n \, \dd b_n\\&=\frac{1}{8\pi^2}\int \bigg( \frac{f(a_k)-f(a_n)}{a_k-a_n}\bigg)^2\chi^{\prime}(b_k)\chi^{\prime}(b_n)\mathbf{1}_{\{b_k \ge 1,\, b_n \le -1\}}\, \dd a_k \, \dd b_k \, \dd a_n \, \dd b_n+O(N^{-\beta/3})\\&=-\frac{1}{8\pi^2}\int \bigg( \frac{f(a_k)-f(a_n)}{a_k-a_n}\bigg)^2 \dd a_k \, \dd a_n+O(N^{-\beta/3})\,,
\end{aligned}
\end{equation*}
where the second last step is an elementary estimate whose details we omit. Hence by \eqref{f_eta} and Lemma \ref{prop_prec} we have
\begin{equation} \label{4.49}
\int_{A_5} F_{kn}= \frac{1}{4\pi^2}\int \bigg( \frac{f(x)-f(y)}{x-y}\bigg)^2 \dd x \, \dd y\cdot \bE \langle \tr f_{\eta}(H) \rangle^{n-2}+O(N^{-rs\beta/9})\,.
\end{equation}

Similarly, let $A_6\deq \{(x,y)\in A_4:y_k \le -\omega ,y_n \ge \omega, |x_k-x_n| \le  \eta N^{(n+1)\gamma/s}\, \}$, and we have
\begin{equation} \label{4.50}
\int_{A_6} F_{kn}= \frac{1}{4\pi^2}\int \bigg( \frac{f(x)-f(y)}{x-y}\bigg)^2 \dd x \, \dd y\cdot \bE \langle \tr f_{\eta}(H) \rangle^{n-2}+O(N^{-rs\beta/9})\,.
\end{equation}
Thus by (\ref{4.49}) and (\ref{4.50}) we conclude proof.
\end{proof}
Note that Lemma \ref{lem4.6} and {\ref{lem:4.7}} imply
\begin{equation*}
\int F_{kn} =\frac{n-1}{2\pi^2}\int \bigg( \frac{f(x)-f(y)}{x-y}\bigg)^2 \dd x \, \dd y \cdot \bE \langle \tr f_{\eta}(H) \rangle^{n-2}+O(N^{-rs\beta/9})\,.
\end{equation*} 
Together with Lemma \ref{lemF} and (\ref{hehe}), we have
\begin{equation}
\bE \langle \tr f_{\eta}(H) \rangle^{n}=\frac{n-1}{2\pi^2}\int \bigg( \frac{f(x)-f(y)}{x-y}\bigg)^2 \dd x \, \dd y \cdot \bE \langle \tr f_{\eta}(H) \rangle^{n-2}+O(N^{-rs\beta/9})\,,
\end{equation}
which finishes the proof of Lemma \ref{lem4.3} (i).

\subsection{Proof of Lemma \ref{lem4.3} (ii)} \label{sec4.4}
Let $f \in {C}^{1,r,s}(\bR)$ with $r,s > 0$, and without loss of generality we assume $s \le 1$. We define $\tilde{f}$ as in \eqref{tildef_1}. Let $\sigma = N^{-(\alpha+\beta)}$, where we define $\beta=sc_0/4$ instead in \eqref{beta}. Let $\chi$ be as in Lemma \ref{HS} satisfying $\chi(y)=1$ for $|y| \le 1$, and $\chi(y)=0$ for $|y|\ge 2$. An application of Lemma \ref{HS} gives
\begin{equation} \label{4.42}
\bE [\tr f_{\eta}(H)]=N\int \varphi_f(x+\mathrm{i}y)\, (\bE\ul{ G(x+\mathrm{i}y)}-m(x+\mathrm{i}y))\, \mathrm{d}x\, \mathrm{d}y\eqd\int \tilde{F}\,,
\end{equation}
where $\varphi_f$ is defined as in \eqref{4.7}. Note that \eqref{3.4} and \eqref{outside} imply
$$
\big|\ul{ G(x+\mathrm{i}y)}-m(x+\mathrm{i}y))\big| \prec \frac{1}{N|y|}
$$
uniformly in $x \in \bR$, $|y| \le 1$. Then we have
\begin{equation}
\bigg|\int_{|y|\le \sigma} \tilde{F} \,\bigg|\prec  \int_{|y|\le \sigma}\bigg|\varphi_f(z)\cdot\frac{1}{y}\bigg|\,\mathrm{d}^2z=O(N^{- rq_0\beta})\,,
\end{equation}
where $q_0=q_0(s)=\frac{s}{2(1+s)}\ge \frac{s}{4}$.
Also, we have
\begin{equation}
\bigg|\int_{|y|\ge\sigma,\,|x|> 2-\frac{\kappa}{2}} \tilde{F}\,\bigg|\prec  \int_{|y|\ge\sigma,\,|x|> 2-\frac{\kappa}{2}}\bigg|\varphi_f(z)\cdot\frac{1}{y}\bigg|\,\mathrm{d}^2z=O(N^{\beta-s\alpha/2})=O(N^{-sc_0/2})\,.
\end{equation} An analogue of \eqref{3.74} yields
\begin{equation}
\bE \ul{G(x+\mathrm{i}y)}-m(x+\mathrm{i}y)=O( N^{(\alpha+\beta)-1-c_0(\alpha+\beta)})
\end{equation}
uniformly in $|x|\le 2-\frac{\kappa}{2}$, $|y| \ge \sigma$, where the function $c_0(\cdot)$ is defined as in \eqref{functionc_0}. Thus
\begin{equation}
\int_{|y|\ge\sigma,\,|x|\le 2-\frac{\kappa}{2}} \tilde{F}=O( N^{(\alpha+\beta)-c_0(\alpha+\beta)}\cdot \eta)=O(N^{\beta-c_0(\alpha+\beta)})=O(N^{-c_0/2})\,.
\end{equation}
Altogether we have \eqref{compare}.
\subsection{Remark on the general case} \label{sec4.6}
Let us turn to Theorem \ref{weakthm2}. As in the 1-dimensional case, we first show that
	\begin{equation} \label{2.12}
	(\tilde{Z}(f_1),\dots,\tilde{Z}(f_p)) \overset{d}{\longrightarrow} (Z(f_1),\dots,Z(f_p))
	\end{equation}
	as $N \to \infty$,
where $\tilde{Z}(f_i)\deq  \big\langle \tr f_i(\frac{H-E}{\eta})\big\rangle$ for $1 \le i \le p$. In order to show (\ref{2.12}), it suffices  to compute 
$\bE [\tilde{Z}(f_{i_1}) \cdots \tilde{Z}(f_{i_n})]$, $i_1,\dots,i_n \in \{1,2,\dots,p\}$, and this follows exactly the same way as we compute $\bE \langle \tr f_{\eta}(H) \rangle^{n}$. Theorem \ref{weakthm2} then follows from the estimate $\bE \hat{Z}(f_{i})=O(N^{-rs^2c_0/16})$ for all $i \in \{1,2,\dots,p\}$, which is Lemma \ref{lem4.3} (ii).

\section{Relaxing the moment condition} \label{sec:5.1}
In this section we use a Green function comparison argument to pass from Theorems \ref{weakthm1} and \ref{weakthm2} to Theorems \ref{mainthm1} and \ref{mainthm2}.

Recall $G\deq G(E+\mathrm{i}\eta)=(H-E-\mathrm{i}\eta)^{-1}$, and $[\underline{G}]\deq \underline{G}-m(E+\mathrm{i}\eta)$ with $E, \eta$ defined in Theorem \ref{mainthm1}. Similar as in Section {\ref{sec3}}, we have a particular case of Theorem \ref{mainthm1}.
\begin{proposition} \label{prop5.1}
	Let $\eta,E,H$ be as in Theorem \ref{mainthm1}. Then
	\begin{equation} \label{5.1}
	N^{1-\alpha} [\underline{G}] \overset{d}{\longrightarrow} \mathcal{N}_{\bC}\Big(0,\,\frac{1}{2}\Big)
	\end{equation}
	as $N \to \infty$.
\end{proposition}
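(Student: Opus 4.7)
The plan is to use a standard truncation and Green function comparison argument to deduce Proposition 5.1 from the already-established Proposition 3.3. First, I would truncate the entries: define $\tilde H_{ij} \deq H_{ij} \mathbf{1}_{\{|H_{ij}| \le N^{-1/2+\delta}\}}$ for a small fixed $\delta = \delta(c) > 0$ chosen so that $(4+c)\delta > 2$ (e.g., $\delta = 3/(4+c)$). The moment bound in Definition 2.1 (iii), together with Markov's inequality and a union bound, yields $\mathbb{P}(H \ne \tilde H) \le C N^{2-(4+c)\delta} \to 0$, so it suffices to establish the convergence with $H$ replaced by $\tilde H$.

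Next, I would recenter and rescale to obtain a matrix meeting Definition 3.5: set $H' \deq \lambda(\tilde H - M)$, where $M_{ij} \deq \mathbb{E} \tilde H_{ij}$ and $\lambda > 0$ is chosen so that $\mathbb{E}|\sqrt N H'_{ij}|^2 = 1$ for $i \ne j$. Elementary estimates based on the finite $(4+c)$-th moment give $|M_{ij}| = O(N^{-1/2 - (3+c)\delta})$ and $|\lambda - 1| = O(N^{-(2+c)\delta})$; moreover $M$ has the rank-two form $a(\mathbf{1} \mathbf{1}^\top - I) + bI$ with these small scalars $a, b$. Since $H'$ has bounded entries, Definition 3.5 is satisfied and Proposition 3.3 gives $N^{1-\alpha}[\underline{G(H')}] \overset{d}{\to} \mathcal{N}_{\mathbb{C}}(0, 1/2)$.

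The remaining task is to show that $N^{1-\alpha}\bigl(\underline{G(\tilde H)} - \underline{G(H')}\bigr) \to 0$ in probability. Writing $\Delta \deq H' - \tilde H = (\lambda-1)\tilde H - \lambda M$, the resolvent identity and cyclicity of the trace give
\begin{equation*}
\underline{G(\tilde H)} - \underline{G(H')} \;=\; \frac{1}{N}\,\tr\bigl(\Delta\, G(H')\, G(\tilde H)\bigr)\,.
\end{equation*}
The contribution of $-\lambda M$ equals $-\lambda a\, \mathbf{1}^\top G(H') G(\tilde H)\mathbf{1} + \lambda(a-b)\,\tr\bigl(G(H')G(\tilde H)\bigr)$, which is bounded using the isotropic-type estimate $|\mathbf{1}^\top G' G \mathbf{1}| = O(N)$ and the bulk estimate $|\tr(G' G)| = O(N/\eta)$, both consequences of the local semicircle law. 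The rescaling contribution $(\lambda - 1)\,\tr\bigl(\tilde H\, G(H') G(\tilde H)\bigr)$ reduces to the preceding quantities via $\tilde H = H'/\lambda + M$ together with $H' G(H') = I + (E + \mathrm{i}\eta) G(H')$. In each case the smallness of $|M|$ and $|\lambda - 1|$ compensates the $N$ and $\eta^{-1}$ factors, yielding a bound of order $N^{-\epsilon}$ after multiplication by $N^{1-\alpha}$ for some $\epsilon = \epsilon(c, \alpha) > 0$.

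The main obstacle is that the naive bound $\|G\| \le \eta^{-1} = N^{\alpha}$ is too weak at fine mesoscopic scales. This is circumvented by exploiting the rigid structure of $\Delta$—a rank-two deterministic matrix plus a scalar multiple of $\tilde H$ itself—which enables estimating $\tr(\Delta G' G)$ using averaged resolvent quantities rather than entrywise bounds, leaving adequate room for the proof to close uniformly in $\alpha \in (0,1)$.
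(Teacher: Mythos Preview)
Your approach has a genuine gap: the matrix $H'$ you construct does \emph{not} satisfy Definition~\ref{def:dWigner}, so Proposition~\ref{prop3.3} cannot be applied to it. Concretely, your truncation level $N^{-1/2+\delta}$ is forced to satisfy $(4+c)\delta>2$ (otherwise the union bound $\P(H\neq\tilde H)\to 0$ fails), hence $\delta>2/(4+c)$, which for small $c$ is close to $1/2$. Then $\abs{\sqrt{N}\tilde H_{ij}}$ may be as large as $N^{\delta}$, and for $p>4+c$ one only has
\[
\E\abs{\sqrt{N}\tilde H_{ij}}^p \;\leq\; N^{(p-4-c)\delta}\,\E\abs{\sqrt{N}H_{ij}}^{4+c}\;=\;O\bigl(N^{(p-4-c)\delta}\bigr)\,,
\]
which is not uniformly bounded in $N$. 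But the proof of Proposition~\ref{prop3.3} (via Lemma~\ref{mainlem}) requires finite moments of \emph{all} orders, since the cumulant expansion is carried out to arbitrarily high order $l_0$ depending on the target moment $m+n$; the growing higher moments of $\tilde H$ would destroy the remainder estimates there.

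There is a secondary gap as well: Definition~\ref{def:Wigner} does not assume the off-diagonal entries are identically distributed, only independent with common variance. Hence after truncation the entrywise means $M_{ij}=\E\tilde H_{ij}$ and the variance deficits are genuinely $(i,j)$-dependent, so $M$ is \emph{not} of the form $a(\b 1\b 1^\top-I)+bI$, and a single scalar $\lambda$ does not restore $\E\abs{\sqrt{N}H'_{ij}}^2=1$ for all $i\ne j$. Your resolvent comparison therefore cannot rely on the low-rank structure of $\Delta$.

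The paper circumvents both issues. Using Lemma~\ref{lem5.1} it first builds, entry by entry, a truncated matrix $H^{(1)}$ that matches the first two moments of $H$ \emph{exactly} (so $H^{(1)}$ is itself a Wigner matrix in the sense of Definition~\ref{def:Wigner}, and $\P(H\neq H^{(1)})=O(N^{-c/4})$). It then constructs a separate matrix $H^{(2)}$ with uniformly bounded support (hence satisfying Definition~\ref{def:dWigner}) whose first \emph{four} moments match those of $H^{(1)}$, applies Proposition~\ref{prop3.3} to $H^{(2)}$, and passes from $H^{(2)}$ to $H^{(1)}$ by a Lindeberg Green function comparison: swapping one entry at a time, the four-moment match makes each of the $O(N^2)$ swaps cost $O(N^{-2-c/2})$. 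The key point is that Proposition~\ref{prop3.3} is applied only to the genuinely bounded matrix $H^{(2)}$, never to a naive truncation.
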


In this section we only sketch a proof of Proposition \ref{prop5.1}, and the other results can be proved analogously. We begin with the following lemma.
\begin{lemma} \label{lem5.1}
	Fix $m>2$ and let $X$ be a real random variable, with absolutely continuous law, satisfying
	\begin{equation}
	\bE X=0\,, \hspace{0.5cm} \bE X^2=\sigma^2\,, \hspace{0.5cm} \bE |X|^{m} \le C_m
	\end{equation} 
	for some constant $C_m>0$. Let $\lambda>2 \sigma$. Then there exists a real random variable $Y$ that satisfies
\begin{equation}
\bE Y=0, \hspace{0.5cm}  \bE Y^2=\sigma^2\,, \hspace{0.5cm} |Y| \le \lambda\,, \hspace{0.5cm} \bP(X\ne Y) \le 2C_m \lambda^{-m}\,.
\end{equation} 
In particular, $\bE |Y|^{m} \le 3 C_m$. Moreover, if $m>4$ and $\sigma=1$, then there exists a real random variable $Z$ matching the first four moments of $Y$, and satisfies $|Z| \le 6C_m$. 
\end{lemma}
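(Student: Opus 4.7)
The lemma splits into two independent parts: building $Y$ from $X$ by truncation followed by a controlled reshuffling (using the absolute continuity of $X$), and building $Z$ from $Y$ by a three-atom solution of the truncated moment problem. I now sketch each.

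For the first part, let $A_1 \deq \{|X| > \lambda\}$, so that $\bP(A_1) \le C_m \lambda^{-m}$ by Markov. Since $X$ has absolutely continuous law and $\lambda > 2\sigma$ guarantees $\bP(|X| \le \lambda/2) \ge 1 - 4\sigma^2/\lambda^2$ close to $1$, I can choose a Borel set $A_2 \subset \{|X| \le \lambda/2\}$ disjoint from $A_1$ with $\bP(A_1 \cup A_2) = 2C_m \lambda^{-m}$. On a possibly enlarged probability space, let $W$ be independent of $X$ with some distribution $\mu$ on $[-\lambda,\lambda]$ to be specified, and set
\begin{equation*}
Y \deq X\,\mathbf{1}_{(A_1 \cup A_2)^c} + W\,\mathbf{1}_{A_1 \cup A_2}.
\end{equation*}
Then $|Y| \le \lambda$ and $\bP(X \ne Y) \le 2 C_m \lambda^{-m}$ hold automatically. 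The conditions $\bE Y = 0$ and $\bE Y^2 = \sigma^2$ reduce to the prescriptions $\bE_\mu W = a$ and $\bE_\mu W^2 = b$, where $a = \bE X \mathbf{1}_{A_1 \cup A_2}/(2C_m\lambda^{-m})$ and $b = \bE X^2 \mathbf{1}_{A_1 \cup A_2}/(2C_m\lambda^{-m})$. Combining the Markov-type bounds $|\bE X\mathbf{1}_{A_1}| \le C_m \lambda^{-(m-1)}$ and $\bE X^2 \mathbf{1}_{A_1} \le C_m \lambda^{-(m-2)}$ with the crude estimates $|\bE X \mathbf{1}_{A_2}| \le (\lambda/2)\bP(A_2)$ and $\bE X^2 \mathbf{1}_{A_2} \le (\lambda/2)^2 \bP(A_2)$ shows that $(a,b)$ lies in the admissible region $\{|a| \le \lambda,\ a^2 \le b \le \lambda^2\}$ (the inequality $a^2 \le b$ being Cauchy-Schwarz for the restricted measure). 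Hence a distribution $\mu$ on $[-\lambda, \lambda]$ realizing $(a, b)$ exists (e.g.\ as a two- or three-atom measure), and $\bE|Y|^m \le \bE|X|^m + \lambda^m \cdot 2C_m\lambda^{-m} \le 3C_m$ follows at once.

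For the second part, assume $m > 4$ and $\sigma = 1$, and write $m_k \deq \bE Y^k$. Log-convexity of $L^p$-norms together with $\bE|Y|^m \le 3C_m$ and $\bE Y^2 = 1$ gives $m_4 \le (3C_m)^{2/(m-2)}$. I take $Z$ as the three-atom measure on $\{-a, 0, b\}$ with $a, b > 0$ whose moment equations solve explicitly to
\begin{equation*}
a + b = \sqrt{4m_4 - 3m_3^2}, \qquad b - a = m_3, \qquad ab = m_4 - m_3^2,
\end{equation*}
with atom weights $1/(a(a+b))$, $1 - 1/(ab)$, $1/(b(a+b))$ at $-a, 0, b$ respectively. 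Positivity of the discriminant and of $a, b$ follows from $m_3^2 \le m_4$ (Cauchy-Schwarz), and positivity of the middle weight, i.e.\ $ab = m_4 - m_3^2 \ge 1$, is precisely Cauchy-Schwarz applied to $\mathrm{Cov}(Y, Y^2)^2 \le \mathrm{Var}(Y)\,\mathrm{Var}(Y^2) = m_4 - 1$. Finally,
\begin{equation*}
|Z| \le \max(a, b) \le \tfrac{1}{2}\bigl(|m_3| + \sqrt{4m_4}\bigr) \le \tfrac{3}{2}\sqrt{m_4} \le \tfrac{3}{2}(3C_m)^{1/(m-2)} \le \tfrac{3}{2}\sqrt{3C_m} \le 6C_m,
\end{equation*}
where the last two steps use $1/(m-2) < 1/2$ together with $C_m \ge 1$ (which follows from $C_m \ge \bE|X|^m \ge (\bE X^2)^{m/2} = 1$ by Jensen).

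The main obstacle is the calibration of the auxiliary set $A_2$ in the first part: one must simultaneously ensure (i) that the modification set $A_1 \cup A_2$ has probability at most $2C_m \lambda^{-m}$, and (ii) that the residual moments $(a, b)$ forced upon $\mu$ fall in the admissible region for distributions on $[-\lambda, \lambda]$. Taking $\bP(A_2)$ of the same order as the Markov bound $C_m \lambda^{-m}$ and placing $A_2$ inside $\{|X| \le \lambda/2\}$ reduces both requirements to elementary Markov-type and Cauchy-Schwarz estimates. The second part is a standard three-point truncated moment problem, solved explicitly above.
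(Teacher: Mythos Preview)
The paper does not actually prove this lemma; it merely cites Lemma~7.6 and the proof of Theorem~2.5 of \cite{10} and omits all details. Your argument is therefore more informative than what the paper supplies, and both parts follow the standard route: truncate and rerandomise on a small set to build $Y$, then solve a three-point truncated moment problem to build $Z$. The second part is clean and correct as written (the algebra for the three-atom measure and the Cauchy--Schwarz inequality $m_3^2\le m_4-1$ ensuring nonnegativity of the weight at $0$ are both right, and the chain of bounds leading to $|Z|\le 6C_m$ checks out once one notes $C_m\ge (\bE X^2)^{m/2}=1$).

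There is one small technical gap in the first part. You assert that because $\lambda>2\sigma$ gives $\bP(|X|\le\lambda/2)\ge 1-4\sigma^2/\lambda^2$ ``close to~$1$'', a set $A_2\subset\{|X|\le\lambda/2\}$ of mass exactly $2C_m\lambda^{-m}-\bP(A_1)$ always exists. But the hypothesis $\lambda>2\sigma$ only guarantees $1-4\sigma^2/\lambda^2>0$, which can be arbitrarily small, while $2C_m\lambda^{-m}$ need not be small; so the required $A_2$ may fail to fit inside $\{|X|\le\lambda/2\}$. The fix is easy: first dispose of the regime where, say, $4\sigma^2/\lambda^2+2C_m\lambda^{-m}\ge 1$. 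In that case $2C_m\lambda^{-m}$ is bounded below by a positive universal constant (using $\sigma^m\le C_m$ from Jensen), so the coupling requirement $\bP(X\ne Y)\le 2C_m\lambda^{-m}$ is essentially vacuous and one may simply take $Y$ to be any independent mean-zero variance-$\sigma^2$ variable supported in $[-\lambda,\lambda]$, for instance $Y$ uniform on $\{\pm\sigma\}$, which trivially gives $\bE|Y|^m=\sigma^m\le C_m\le 3C_m$. In the complementary regime your construction works verbatim, since then $\bP(\lambda/2<|X|\le\lambda)\le 4\sigma^2/\lambda^2\le 1-2C_m\lambda^{-m}$ guarantees enough room in $\{|X|\le\lambda/2\}$ for $A_2$, and your estimates $|a|\le\lambda$, $b\le(3/4)\lambda^2$ then go through exactly as you wrote them.
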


The existence of $Y$ is a slight modification of Lemma 7.6 in \cite{10}, and the construction of $Z$ is contained in the proof of Theorem 2.5 in \cite{10}; we omit further details.

The next lemma is an easy application of Lemma \ref{lem5.1}.
\begin{lemma} \label{lem5.3}
	Let $H$ be a real symmetric Wigner matrix, whose entries have absolutely continuous law. Let $c$ be as in Definition \ref{def:Wigner}. Then there exists a real symmetric Wigner matrix $H^{(1)}$ satisfying Definition \ref{def:Wigner} and
	\begin{equation}
	\max\limits_{i,j} \bP(H^{(1)}_{ij} \ne H_{ij})=O(N^{-2-c/4+\delta_{ij}})\,, \qquad  \max\limits_{i,j}|H^{(1)}_{ij}| \le N^{-\ee}\,,
	\end{equation}
	where $\ee=\ee(c)\deq \tfrac{c}{4(4+c)}>0$. Moreover, there exists a real symmetric Wigner matrix $H^{(2)}$ satisfying Definition \ref{def:dWigner}, such that for all $i,j$,
	\begin{equation}
	\bE \big(H^{(2)}_{ij}\big)^k=\bE \big(H^{(1)}_{ij}\big)^k\,,
	\end{equation}
where $1 \le k \le 4-2\delta_{ij}$.
\end{lemma}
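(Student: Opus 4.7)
The strategy is to apply Lemma 5.2 entrywise to the independent upper-triangular entries of $H$. Absolute continuity of the law of each $H_{ij}$ allows the input hypothesis of Lemma 5.2 to be verified directly on each entry.

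For the construction of $H^{(1)}$, fix $i \le j$ and apply Lemma 5.2 to $X = H_{ij}$ with $m = 4 + c - 2 \delta_{ij}$, $\sigma^2 = \bE H_{ij}^2 = O(1/N)$, $C_m = C \cdot N^{-m/2}$ (from Definition 2.1 (iii) after rescaling), and truncation threshold $\lambda = N^{-\varepsilon}$ with $\varepsilon = c/(4(4+c))$. Since $\lambda \gg N^{-1/2} \ge 2\sigma$ for $N$ large, the hypothesis is satisfied, producing a variable $Y_{ij}$ with mean zero, variance $\sigma^2$, absolute value at most $N^{-\varepsilon}$, and $\bP(Y_{ij} \ne H_{ij}) \le 2 C_m \lambda^{-m} = 2C \cdot N^{-m(1/2 - \varepsilon)}$. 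A direct computation shows $m(1/2-\varepsilon) = 2+c/4$ in the off-diagonal case, and $m(1/2-\varepsilon) = 1 + c/2 - (2+c)c/(4(4+c)) \ge 1 + c/4$ in the diagonal case (using $(2+c)/(4+c) \le 1$). Define $H^{(1)}_{ij} = H^{(1)}_{ji} \deq Y_{ij}$. Then $H^{(1)}$ is real symmetric with independent upper-triangular entries (the construction of each $Y_{ij}$ is a measurable function of $H_{ij}$ alone), satisfies all of Definition 2.1 with constant $3C$ in (iii), and meets the two claims on the entry size and on $\bP(H^{(1)}_{ij} \ne H_{ij})$.

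For $H^{(2)}$, treat the two cases separately. For $i \ne j$, set $X = \sqrt{N} H^{(1)}_{ij}$; this has mean zero, variance one, is bounded by $N^{1/2-\varepsilon}$, and satisfies $\bE|X|^{4+c} \le 3C$. Since $4+c > 4$ and $\sigma = 1$, the second part of Lemma 5.2 yields a bounded $Z_{ij}$ matching the first four moments of $X$ with $|Z_{ij}| \le 18C$; set $H^{(2)}_{ij} = H^{(2)}_{ji} \deq Z_{ij}/\sqrt{N}$. For $i = j$ we only need to match the first two moments ($4 - 2\delta_{ii} = 2$), which is done by letting $H^{(2)}_{ii}$ be the symmetric two-point variable $\pm \sqrt{\bE (H^{(1)}_{ii})^2}$, independent across $i$. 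This is bounded by $O(N^{-1/2})$ and matches the mean and variance of $H^{(1)}_{ii}$. In either case each entry of $H^{(2)}$ is bounded by $O(N^{-1/2})$, so $\bE |\sqrt N H^{(2)}_{ij}|^p \le C_p$ for all $p$, verifying Definition 3.3.

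The key balancing step, which is the one nontrivial point, is the choice of $\varepsilon$: $\lambda = N^{-\varepsilon}$ must be small enough for the large-deviation factor $\lambda^{-m}$ to deliver the target probability rate, yet large enough to dominate the scale $\sigma \asymp N^{-1/2}$. The off-diagonal requirement $m(1/2-\varepsilon) \ge 2+c/4$ forces $\varepsilon \le c/(4(4+c))$, and the diagonal requirement $m(1/2-\varepsilon) \ge 1 + c/4$ is then automatic. A minor secondary point is that $m = 2+c$ on the diagonal may fail to exceed $4$, so the second part of Lemma 5.2 cannot be invoked directly; this is sidestepped by the observation that only the first two moments need to be matched on the diagonal, which is trivial.
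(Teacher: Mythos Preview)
Your proof is correct and follows essentially the same approach as the paper: apply Lemma~\ref{lem5.1} entrywise with $m=4+c-2\delta_{ij}$ and truncation at scale $N^{-\varepsilon}$ to build $H^{(1)}$, then invoke the second part of Lemma~\ref{lem5.1} on the rescaled off-diagonal entries to build $H^{(2)}$. The only cosmetic differences are that the paper applies Lemma~\ref{lem5.1} to $\sqrt{N}H_{ij}$ with $\lambda=N^{1/2-\varepsilon}$ rather than to $H_{ij}$ with $\lambda=N^{-\varepsilon}$ (a pure rescaling), and that for the diagonal entries of $H^{(2)}$ the paper uses centred Gaussians $\cal N(0,\zeta_i)$ rather than your two-point distribution; both choices match the required first two moments and have bounded moments of all orders.
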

\begin{proof} 
Fix $c,C > 0$ such that $ \bE |\sqrt{N}H_{ij}|^{4+c-2\delta_{ij}} \le C$ for all $i , j$. By using Lemma \ref{lem5.1} with $m\deq 4+c-2\delta_{ij}$, $X\deq \sqrt{N}H_{ij}$, $\lambda\deq N^{1/2-\ee}$, $C_m\deq C$, we construct, for each $H_{ij}$, a random variable $H^{(1)}_{ij}\deq  N^{-1/2}Y$ such that the family $\{H^{(1)}_{ij}\}_{i\le j}$ is independent and
$$
\bE H^{(1)}_{ij}=0\,, \hspace{0.7cm}  \bE \big(H_{ij}^{(1)}\big)^2=\bE {H_{ij}}^2\,, \hspace{0.7cm} |H_{ij}^{(1)}| \le N^{-\ee}\,, \hspace{0.7cm} \bP(H_{ij}\ne H_{ij}^{(1)}) \le 2C N^{-2-c/4+\delta_{ij}}\,,$$ and we also have 
$$
\bE |\sqrt{N}H^{(1)}_{ij}|^{4+c-2\delta_{ij}} \le 3C\,.
$$
Hence we have proved the existence of $H^{(1)}$.

For $i <j$,
by using the second part of Lemma \ref{lem5.1} on $Y= \sqrt{N}H^{(1)}_{ij}$, we construct, for each $H^{(1)}_{ij}$, a random variable $H^{(2)}_{ij} \deq N^{-1/2}Z$ matching the first four moments of $H^{(1)}_{ij}$, and the family $\{H^{(2)}_{ij}\}_{i<j}$ is independent. Moreover, we have the bound $|\sqrt{N}H^{(2)}_{ij}| \le 6C$,
which ensures $\sqrt{N}H^{(2)}_{ij}$ has uniformly bounded moments of all order. Let us denote $\zeta_i\deq \bE |\sqrt{N}H^{(1)}_{ii}|^{2} $. Then we can construct random variables $H^{(2)}_{ii}$ such that $\sqrt{N}H^{(2)}_{ii} \overset{d}{\sim} \mathcal{N}(0,\zeta_i)$ and the family $\{H^{(2)}_{ij}\}_{i\le j}$ is independent. This completes the proof.
\end{proof}
Now we look at Proposition \ref{prop5.1}.
\begin{proof} [Proof of Proposition \ref{prop5.1}]
Let $H$ be as in Theorem \ref{mainthm1}. Note that it suffices to consider the case that the entries of $H$ have absolutely continuous law. Otherwise consider the matrix $$H^{\prime} \deq (1-e^{-2N})^{1/2}\cdot H + e^{-N} V\,,$$
 where $V$ is a GOE matrix independent of $H$. Then $H^{\prime}$ also satisfies Definition \ref{def:Wigner}. Let $G^{\prime}\deq (H^{\prime}-E-\mathrm{i}\eta)^{-1}$, and $[\underline{G^{\prime}}]\deq \underline{G^{\prime}}-m(E+\mathrm{i}\eta)$ with $E, \eta$ defined in Theorem \ref{mainthm1}. The resolvent identity $G^{\prime}-G=G(H-H^{\prime})G^{\prime}$ implies 
 $$
 \big|[\underline{G^{\prime}}]-[\underline{G}]\big| \prec e^{-N/2}\,.
 $$
We can then construct $H^{(1)}$ and $H^{(2)}$ from $H$, as in Lemma \ref{lem5.3}.

Let $z\deq E+\mathrm{i}\eta$. We have already obtained from Proposition \ref{prop3.3} that
\begin{equation} \label{5.8}
N^{1-\alpha}\left(\frac{1}{N} \tr \frac{1}{H^{(2)}-z} -m(z)\right) \overset{d}{\longrightarrow} \mathcal{N}_{\bC}\Big(0,\,\frac{1}{2}\Big)\,,
\end{equation}
and we need to show (\ref{5.8}) holds with $H^{(2)}$ replaced by $H$. We first compare the local spectral statistics of $H^{(2)}$ and $H^{(1)}$ using the Green function comparison method
from \cite{12}; see also Section 4 of \cite{3} for an overview. Fix a bijective ordering map on the index set of the independent matrix entries,
$$
\phi:\{{(i,j) : 1 \le i \le j \le N}\}  \longrightarrow \{1,\dots,\gamma(N)\}\,, \ \ \gamma(N)\deq \frac{N(N+1)}{2}\,,
$$
and we assume $\phi(i,i)=i$ for $ i=1,2,\dots,N$. Denote by $H_\gamma$ the Wigner matrix whose matrix entries $h_{ij}=H^{(2)}_{ij}$ if $\phi(i,j) \le \gamma$ and $h_{ij}=H^{(1)}_{ij}$ otherwise; in particular $H^{(2)} = H_0$ and $H^{(1)} = H_{\gamma(N)}$.
Let $F=F(x+\mathrm{i}y)$ be a complex-valued, smooth, bounded function, with bounded derivatives. Then
\begin{equation*}
\begin{aligned}
&\ \bE F\left(N^{1-\alpha}\left( \frac{1}{N} \tr \frac{1}{H^{(2)}-z} -m(z)\right)\right)-\bE F\left(N^{1-\alpha}\left( \frac{1}{N} \tr \frac{1}{H^{(1)}-z} -m(z)\right)\right)\\
= &\, \sum\limits_{\gamma=1}^{\gamma(N)} \left[ \bE F\left(N^{1-\alpha}\left( \frac{1}{N} \tr \frac{1}{H_{\gamma-1}-z} -m(z)\right)\right)-\bE F\left(N^{1-\alpha}\left( \frac{1}{N} \tr \frac{1}{H_{\gamma}-z} -m(z)\right)\right)\right]\,.
\end{aligned}
\end{equation*}
Now we focus on the term 
$$
\bE F\left(N^{1-\alpha}\left( \frac{1}{N} \tr \frac{1}{H_{\gamma-1}-z} -m(z)\right)\right)-\bE F\left(N^{1-\alpha}\left( \frac{1}{N} \tr \frac{1}{H_{\gamma}-z} -m(z)\right)\right)\eqd \ee_{\gamma}\,.
$$
For $\gamma>N$, let $(i,j)=\phi^{-1}(\gamma)$. Note that $i<j$, and we define $$\tilde{V}=H^{(2)}_{ij}\Delta^{(ij)}\,, \ \ \hat{V}=H^{(1)}_{ij}\Delta^{(ij)}\,, $$
and recall from section \ref{sec:3.2} that the matrix $\Delta^{(ij)}$ satisfies $\Delta^{(ij)}_{kl} =(\delta_{ik}\delta_{jl}+\delta_{jk}\delta_{il})(1+\delta_{ij})^{-1}$. Denote $Q\deq H_{\gamma-1}-\tilde{V}$. Then $H_{\gamma-1}=Q+\tilde{V}$, and $\tilde{V}$ is independent of $Q$. Also, $H_{\gamma}=Q+\hat{V}$. Define the Green functions
$$
R\deq \frac{1}{Q-z}\,,\ \ S\deq \frac{1}{H_{\gamma-1}-z}\,,\ \ T\deq \frac{1}{H_{\gamma}-z}\,,
$$
and the resolvent expansion gives
\begin{equation} \label{5.11}
S=R-R\tilde{V}R+(R\tilde{V})^2-(R\tilde{V})^3R+(R\tilde{V})^4R-(R\tilde{V})^5S\,.
\end{equation}
Since $\tilde{V}$ has only at most two nonzero entries, when computing the $(k,l)$ matrix entry of this matrix identity, each term is a finite sum involving matrix entries of $S$ or $R$ and ${H}^{(2)}_{ij}$, e.g.\ $(S\tilde{V}S)_{kl} = S_{ki}H^{(2)}_{ij}S_{jl} + S_{kj}H^{(2)}_{ji}S_{il}$. Let $\mathring{S}\deq N^{1-\alpha}( \underline{S}-m(z))$, and $\mathring{R}$, $\mathring{T}$ are defined analogously. Set $\xi\deq \mathring{S}-\mathring{R}$, and note that one can easily obtain $\xi$ from (\ref{5.11}). Similarly, $\mu\deq \mathring{T}-\mathring{R}$, and we have an explicit expansion for
\begin{equation}
\ee_{\gamma}=\bE F(\mathring{S})-\bE F(\mathring{T})=\bE F(\mathring{R}+\xi)-\bE F(\mathring{R}+\mu)\,.
\end{equation}
Now we expand $F(\mathring{R}+\xi)$ and $F(\mathring{R}+\mu)$ around $\mathring{R}$ using Taylor expansion.  The detailed formulas of the expansion can be found in Section 4.1 of \cite{3}, and we omit them here. Since the first four moments of the entries of $H^{(1)}$ and $H^{(2)}$ coincide, the error is bounded by the terms with factors $(H^{(1)}_{ij})^{m_1}(H^{(1)}_{ji})^{n_1}$ or $(H^{(2)}_{ij})^{m_2}(H^{(2)}_{ji})^{n_2}$ in the expansion, where $m_1+n_1, m_2+n_2 \ge 5$. Since
$$\bE \big|H^{(1)}_{ij}\big|^m = N^{-(m-4-c)\ee}\cdot \bE \big|H^{(1)}_{ij}\big|^{4+c} = O(N^{-2-c/2})\,, 
$$
and
$$
\bE \big|H^{(2)}_{ij}\big|^m=O(N^{-\frac{m}{2}})
$$
for all $m \ge 5$, a routine estimate shows the rest terms are bounded by $O(N^{-2-c/2})$. Thus we have $\ee_{\gamma}=O(N^{-2-c/2})$ uniformly in $\gamma>N$. Similarly, one can show $\ee_{\gamma}=O(N^{-1-c/2})$ uniformly for $\gamma \le N$. Thus
$$
\bE F\left(N^{1-\alpha}\left( \frac{1}{N} \tr \frac{1}{H^{(2)}-z} -m(z)\right)\right)-\bE F\left(N^{1-\alpha}\left( \frac{1}{N} \tr \frac{1}{H^{(1)}-z} -m(z)\right)\right)=O(N^{-c/2})\,.
$$ 
The transition from $H^{(1)}$ to $H$ is immediate, since we have
\begin{equation}
\begin{aligned}
&\left| \bE F\left(N^{1-\alpha}\left( \frac{1}{N} \tr \frac{1}{H^{(1)}-z} -m(z)\right)\right)-\bE F\left(N^{1-\alpha}\left( \frac{1}{N} \tr \frac{1}{H-z} -m(z)\right)\right) \right| \\
\le\ & O(\bP(H^{(1)} \ne H)) \le \sum\limits_{i,j} \bP (H^{(1)}_{ij} \ne H_{ij}) =O(N^{-c/4})\,.
\end{aligned}
\end{equation} 
Note that by an approximation argument, for $F$ in the above class, $\lim\limits_{N \to \infty}\bE F(X_N)=\bE F(X)$ is sufficient in showing $X_N \overset{d}{\rightarrow} X$. Thus we have finished the proof.
\end{proof} 

\section{The complex Hermitian case} \label{sec:5.2}
We conclude the paper with a remark on the complex Hermitian case.  As mentioned in Remark \ref{rmk1}, in the complex case we now have (\ref{complex1}) and (\ref{complex2}) instead of Theorem {\ref{mainthm1}} and \ref{mainthm2}. We omit the complete statements of the results here.
The proof in complex Hermitian case replies on the complex cumulant expansion, which we state in the lemma below, whose proof is omitted.
	\begin{lemma}(Complex cumulant expansion) \label{lem:5.1}
		Let $h$ be a complex random variable with all its moments exist. The $(p,q)$-cumulant of $h$ is defined as
		$$
		\mathcal{C}^{(p,q)}(h)\deq (-i)^{p+q} \cdot \left(\frac{\partial^{p+q}}{\partial {s^p} \partial {t^q}} \log \bE e^{\mathrm{i}sh+\mathrm{i}t\bar{h}}\right) \bigg{|}_{s=t=0}\,.
		$$
		Let $f:\bC^2 \to \bC$ be a smooth function, and we denote its holomorphic  derivatives by
		$$
		f^{(p,q)}(z_1,z_2)\deq \frac{\partial^{p+q}}{\partial {z_1}^p \partial {z_2}^q} f(z_1,z_2)\,.
		$$ Then for any fixed $l \in \bN$, we have
		\begin{equation} \label{5.16}
		\bE f(h,\bar{h})\bar{h}=\sum\limits_{p+q=0}^l \frac{1}{p!\,q!}\mathcal{C}^{(p,q+1)}(h)\bE f^{(p,q)}(h,\bar{h}) + R_{l+1}\,,
		\end{equation}
		given all integrals in (\ref{5.16}) exists. Here $R_{l+1}$ is the remainder term depending on $f$ and $h$, and for any $\tau>0$, we have the estimate
		$$
			\begin{aligned}
			R_{l+1}=&\ O(1)\cdot \bE \big|h^{l+2}\cdot\mathbf{1}_{\{|h|>N^{\tau-1/2}\}}\big|\cdot \max\limits_{p+q=l+1}\big\| f^{(p,q)}(z,\bar{z})\big\|_{\infty} \\
			&+O(1) \cdot \bE |h|^{l+2} \cdot \max\limits_{p+q=l+1}\big\| f^{(p,q)}(z,\bar{z})\cdot \mathbf{1}_{\{|z|\le N^{\tau-1/2}\}}\big\|_{\infty}\,.
			\end{aligned}
		$$
	\end{lemma}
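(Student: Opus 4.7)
The plan is to derive \eqref{5.16} as the bivariate analogue of the real cumulant expansion of Lemma~\ref{lem:3.1}. I would view $h$ as the real pair $(h_1,h_2)\deq (\re h,\im h)\in\R^2$ and regard $f(h,\bar h)$ as a function of $(h_1,h_2)$; under the Wirtinger relations $\partial_{z_1}=\tfrac12(\partial_{h_1}-\ii\partial_{h_2})$ and $\partial_{z_2}=\tfrac12(\partial_{h_1}+\ii\partial_{h_2})$, the complex derivatives $f^{(p,q)}$ become linear combinations of $\partial_{h_1}^a\partial_{h_2}^b$ with $a+b=p+q$, and the complex cumulants $\mathcal{C}^{(p,q)}(h)$ correspond to joint real cumulants of $(h_1,h_2)$.

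The first step is to verify \eqref{5.16} on exponentials $f(z_1,z_2)=e^{\ii sz_1+\ii tz_2}$, which pins down the coefficients of the expansion. Setting $M(s,t)\deq \bE e^{\ii sh+\ii t\bar h}$ and $K\deq \log M$, differentiation in $t$ gives $\ii\,\bE\bar h\,e^{\ii sh+\ii t\bar h}=\partial_t M=(\partial_t K)\,M$, and expanding $-\ii(\partial_t K)(s,t)$ as a formal power series by the definition of the $\mathcal{C}^{(p,q)}$ yields
\begin{equation*}
\bE\bar h\, e^{\ii sh+\ii t\bar h}\;=\;\sum_{p,q\ge 0}\frac{(\ii s)^p(\ii t)^q}{p!\,q!}\,\mathcal{C}^{(p,q+1)}(h)\,\bE e^{\ii sh+\ii t\bar h}\;=\;\sum_{p,q\ge 0}\frac{1}{p!\,q!}\,\mathcal{C}^{(p,q+1)}(h)\,\bE f^{(p,q)}(h,\bar h),
\end{equation*}
since $f^{(p,q)}(h,\bar h)=(\ii s)^p(\ii t)^q e^{\ii sh+\ii t\bar h}$. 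Matching coefficients of $(\ii s)^p(\ii t)^q$ on both sides gives the identity (without remainder) for the monomials $f(z_1,z_2)=z_1^p z_2^q$, i.e.\ the moment-cumulant relations in their complex form.

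The second step extends to general smooth $f$ and produces the remainder $R_{l+1}$. I would Taylor-expand $f(h,\bar h)$ in the real variables $(h_1,h_2)$ to order $l$ around the origin, writing $f=P_l+R_l^{\txt{Tay}}$, so that $\bE\bar h\,f(h,\bar h)=\bE\bar h\,P_l(h,\bar h)+\bE\bar h\,R_l^{\txt{Tay}}(h,\bar h)$. The first term reproduces the finite sum in \eqref{5.16} via the monomial identities of Step~1. For the Taylor remainder I would split by the cutoff $\mathbf{1}_{\{|h|\le N^{\tau-1/2}\}}$ and its complement. On the inner region, $\bigl|R_l^{\txt{Tay}}(h,\bar h)\bigr|\le \frac{|h|^{l+1}}{(l+1)!}\max_{p+q=l+1}\norm{f^{(p,q)}\,\mathbf{1}_{\{|z|\le N^{\tau-1/2}\}}}_\infty$ by multivariate Taylor, using $|h_1|^a|h_2|^b\le|h|^{a+b}$; after absorbing $\bar h$ into an additional power of $|h|$ this is the second summand in the stated remainder. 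On the outer region the same Taylor bound with the global sup $\max_{p+q=l+1}\norm{f^{(p,q)}}_\infty$ produces the first summand.

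The main substantive obstacle, modest as it is, lies in Step~2: verifying that after Wirtinger regrouping, the bivariate real Taylor coefficients appearing in $\bE\bar h\,P_l$ coincide with $\mathcal{C}^{(p,q+1)}(h)/(p!\,q!)$. This coincidence is forced by the exponential identity of Step~1 together with the uniqueness of Taylor coefficients: both $(s,t)\mapsto \bE \bar h\,e^{\ii sh+\ii t\bar h}$ and the right-hand side of \eqref{5.16} specialized to exponentials are analytic in $(s,t)$ near the origin with matching series expansions, so the coefficients agree termwise. Given this identification, the two remainder bounds are immediate from the Taylor estimate and the fact that Wirtinger transfer preserves $L^\infty$ norms up to a factor $2^{p+q}$ absorbed in the $O(1)$ prefactor.
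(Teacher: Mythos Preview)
The paper omits the proof of this lemma entirely, so there is nothing to compare your argument against; your approach is a correct and standard way to establish the complex cumulant expansion.

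One small bookkeeping point in Step~2 deserves attention. When you assert that $\bE\bar h\,P_l(h,\bar h)$ ``reproduces the finite sum in \eqref{5.16},'' what the monomial identities actually give is
\[
\bE\bar h\,P_l(h,\bar h)\;=\;\sum_{p+q\le l}\frac{\mathcal{C}^{(p,q+1)}(h)}{p!\,q!}\,\bE P_l^{(p,q)}(h,\bar h)\,,
\]
and $P_l^{(p,q)}$ is the Taylor polynomial of $f^{(p,q)}$ of degree $l-p-q$, not $f^{(p,q)}$ itself. The discrepancy $\bE\bigl[f^{(p,q)}-P_l^{(p,q)}\bigr](h,\bar h)$ is again a Taylor remainder controlled by the $(l{+}1)$-th derivatives of $f$ and a factor $|h|^{l+1-p-q}$. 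Combining this with $|\mathcal{C}^{(p,q+1)}(h)|\le C\max_{k\le p+q+1}\bE|h|^{k}$ and H\"older's inequality on moments (so that $\bE|h|^{k}\cdot\bE|h|^{l+2-k}\le \bE|h|^{l+2}$), each such correction is absorbed into the stated $R_{l+1}$ bound after the same cutoff splitting at $|h|=N^{\tau-1/2}$. This is a routine adjustment and does not affect the soundness of your scheme.
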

Using Lemma \ref{lem:5.1} it is not hard to extend the argument of Sections \ref{sec3}--\ref{sec4} to the complex case. We sketch the required modifications.

Let $H$ be a complex Wigner matrix. An argument analogous to Section \ref{sec:5.1} shows that it suffices to consider $H$ satisfying Definition \ref{def:dWigner}. Let $G\deq G(E+\mathrm{i}\eta)=(H-E-\mathrm{i}\eta)^{-1}$ with $E, \eta$ defined in Theorem \ref{mainthm1}.
	Let $m,n \ge1$. Since $H$ is complex hermitian, for any differentiable $f=f(H)$ we set
	\begin{equation} \label{diff2}
	\frac{\partial }{\partial H_{ij}}f(H)\deq \frac{\mathrm{d}}{\mathrm{d}t}\Big{|}_{t=0} f\pb{H+t\,\tilde{\Delta}^{(ij)}}\,,
	\end{equation}
	where $\tilde{\Delta}^{(ij)}$ denotes the matrix whose entries are zero everywhere except at the site $(i,j)$ where it is one: $\tilde{\Delta}^{(ij)}_{kl} =\delta_{ik}\delta_{jl}$. Then by using Lemma \ref{lem:5.1} with $h=H_{ij}$ we have
\begin{equation} \label{7.3}
	\begin{aligned}
	z\bE \langle \underline{G^{*}} \rangle^n \langle \underline{G} \rangle^m &= \frac{1}{N}\sum\limits_{i,j} \bE \langle \langle \underline{G^{*}} \rangle^n \langle \underline{G} \rangle^{m-1} \rangle G_{ij}H_{ji}\\
	&=\frac{1}{N^2} \sum\limits_{i,j} \bE\frac{\partial( \langle \langle \underline{G^{*}} \rangle^n \langle \underline{G} \rangle^{m-1} \rangle G_{ij})}{\partial H_{ij}} + \hat{K} + \hat{L}\,, 
	\end{aligned}
\end{equation}
	where $\hat{K}$ and $\hat{L}$ are defined analogously to $K$ and $L$ in \eqref{3.12}. Note that 
	\begin{equation} \label{43}
	\frac{\partial G_{ij}}{\partial H_{kl}}=-G_{ik}G_{lj}\,,
	\end{equation}
	and by \eqref{7.3} we have
	\begin{equation} \label{3.188} 
	\begin{aligned}
	\bE \langle \underline{G^{*}} \rangle^n \langle \underline{G} \rangle^m&= \frac{1}{T} \bE \langle \underline{G^{*}} \rangle^n \langle \underline{G} \rangle^{m+1}-\frac{1}{T}\bE \langle \underline{G^{*}} \rangle^n \langle \underline{G} \rangle^{m-1}\bE \langle \underline{G} \rangle^2+\frac{m-1}{N^2T}\bE \langle \underline{G^{*}} \rangle^n \langle \underline{G} \rangle^{m-2}\underline{G^3}\\
	&\ \ -\frac{\hat{K}}{T}-\frac{\hat{L}}{T}+\frac{n}{N^2T}\bE \langle \underline{G^{*}} \rangle^{n-1} \langle \underline{G} \rangle^{m-1}\underline{GG^{*2}}\,,
	\end{aligned}
	\end{equation}
where $T=-z- 2\bE \ul{G}$. By a comparison of \eqref{3.188} and its real analogue \eqref{3.18}, we see that the leading term is now halved. By estimating the subleading terms in a similar fashion, one can show that instead of \eqref{eqn: 2.85}, we have
$$
\bE \langle \underline{G^{*}} \rangle^n \langle \underline{G} \rangle^m=\frac{n}{4}N^{2\alpha-2}\bE \langle \underline{G^{*}} \rangle^{n-1} \langle \underline{G} \rangle^{m-1}+O\big(N^{(m+n)(\alpha-1)-c_0}\big)\,,
$$
which agrees with our statement that we have an additional factor of $1/2$ in the covariances.

\section*{Acknowledgements}

The authors were partially supported by the Swiss National Science Foundation grant 144662 and the SwissMAP NCCR grant.

\end{document}